\documentclass[11pt]{article}
\usepackage{amsfonts}
\usepackage{nicefrac}
\usepackage{amsmath,amscd}
\begin{document}

\newtheorem{theorem}{Theorem}[section]
\newtheorem{lemma}[theorem]{Lemma}
\newtheorem{proposition}[theorem]{Proposition}
\newtheorem{corollary}[theorem]{Corollary}
\newtheorem{definition}[theorem]{Definition}
\newtheorem{example}[theorem]{Example}
\newtheorem{conjecture}[theorem]{Conjecture}

\newenvironment{proof}[1][Proof]{\begin{trivlist}
\item[\hskip \labelsep {\bfseries #1}]}{\end{trivlist}}
\newenvironment{remark}[1][Remark]{\begin{trivlist}
\item[\hskip \labelsep {\bfseries #1}]}{\end{trivlist}}

\newcommand{\qed}{\nobreak \ifvmode \relax \else
      \ifdim\lastskip<1.5em \hskip-\lastskip
      \hskip1.5em plus0em minus0.5em \fi \nobreak
      \vrule height0.75em width0.5em depth0.25em\fi}

\title{Unique factorization results for generalized power series }
\author{Noa Lavi \\ \footnotesize{Department of Mathematics and Physics} \\ \footnotesize{ Roma Tre University} \\ \footnotesize {Rome, Italy} \\                 \footnotesize{noa.lavi@mail.huji.ac.il}}

\date{}

\maketitle
\begin{abstract}
Factorization in rings of the form $K((G^{\le 0} )) $ exhibits pathological behavior due to divisibility by monomials. It remains open whether this is the only obstruction and whether the quotient by the ideal generated by all monomials is a unique factorization domain. Prior to this work, the only instances of unique factorization beyond the irreducible elements followed directly from the primality of elements which aren't divisible by any monomial and whose support has order type $\omega$ or $\omega+1$. Using the $RV$ tool from valuation theory as a key ingredient, we establish unique factorization for elements of $ K((\mathbb{R}^{\le 0}))$ which aren't divisible by any monomial and whose support has order type $\omega^{\alpha} $ or $\omega^{\alpha}+1 $ for a large class of non additively principal ordinals $\alpha$.
\end{abstract}
\section{Introduction}
Rings and fields of power series are classical tools in various areas, such as valuation theory. We call \textbf{generalized power series} a formal sum $b = \sum_\gamma b_\gamma t^\gamma$ where the \textbf{exponents} $\gamma$ vary in an ordered abelian group $G$, the \textbf{coefficients} $b_\gamma$ are taken from some field $K$, and its \textbf{support} $\{\gamma \in G : b_\gamma \neq 0\}$ is well-ordered, namely every non-empty subset has a minimum. It is well known that the collection $K((G))$ of such series forms a field, when equipped with the obvious operations of sum and product \cite{Han}. 

In current work we are interested in the ring $K((\mathbb{R}^{\le 0})) $, that is, the general power series with non-positive exponents. Rings of this form appear in different contexts; for instance, $\mathbb{Z} + \mathbb{R}((G^{<0}))$ is always an integer part of the field $\mathbb{R}((G))$ with the merit of being closed under truncations, a fact that plays a crucial part in the work of Ressayre and his collaborators. In \cite{LM2022} it was proved that problems regarding irreducibility and primality in $K((G^{\le 0}))$ for arbitrary $G$ (for example, the case of omnific integers) can be reduced to the case of $K((\mathbb{R}^{\le 0})) $.

$K((\mathbb{R}^{\le 0}))$ is clearly not a unique factorization domain, as any monomial $t^{\gamma}$ doesn't admit a factorization into irreducible elements. In \cite{LM2022} it was proved that any $b \in K(({\mathbb{R}}^{\le 0}) $ has a maximal polynomial (that is, an element of finite support) dividing it. 
 It is an open problem whether this is the only obstruction, that is, whether elements whose maximal divisor lies in $K$ admit a unique factorization. A starting point would be such elements with additively principal order type. 
\begin{definition}
  Let $P_{\alpha} $ denote the set of all series $b \in K(\mathbb{R}^{\le 0})$ such that $ot(b)=\omega^{\alpha}$ and the supremum of the support of $b$ (also denoted by \emph{$sup(b)$}) is $0$. A series $b$ in $P_{\alpha}$ is said to be \textbf{principal}.
\end{definition}
As long conjectured by Conway, Berarducci proved in \cite{Ber} that for every $\alpha$, if $b$ or $b-1$ is in $P_{\omega^{\alpha}}$ then $b$ is irreducible. There, Berarducci also made the following conjecture, to which we are addressing in current work:
\begin{conjecture}
Let $J$ be the ideal generated by all the monomials. The quotient of $K(({\mathbb{R}}^{\le 0}))$ by $J$ is a UFD. 
\end{conjecture}
In \cite{Pit} Pitteloud has proved the following: 
\begin{theorem} \label{Prime} \cite{Pit}
 For every $b \in P_{1}$ we have $b,b+1$ prime. 
\end{theorem}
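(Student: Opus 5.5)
We want to show that every $b\in P_1$ and $b+1$ are prime in $K((\mathbb{R}^{\le 0}))$. Such a $b$ has support of order type $\omega$ with supremum $0$, i.e. $b=\sum_{n<\omega} b_n t^{\gamma_n}$ with $\gamma_0<\gamma_1<\dots$ and $\gamma_n\to 0$. The plan is to use Berarducci's order-valuation and reduce primality to a statement about a suitable quotient being an integral domain.

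\textbf{Valuation.} Following \cite{Ber}, define for $x\in K((\mathbb{R}^{\le 0}))$ the value $v_J(x)$ as the least $\beta$ such that $ot(\mathrm{supp}(x))\le\omega^\beta$, taken up to the monomial part; more precisely, the version that is additive, $v_J(xy)=v_J(x)+v_J(y)$ on the relevant ideal quotient. Then $b\in P_1$ and $b+1$ have $v_J=1$, the minimal positive value. Consequently any factorization $b=xy$ forces one factor to have value $0$, i.e. finite support after removing monomials. This is Berarducci's irreducibility, which we take as the starting point.

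\textbf{Primality.} Suppose $b\mid xy$. The key step is to work modulo $b$ via a division algorithm. Since $b$ has order type $\omega$ with $\gamma_n\to0$, I would show that every $x$ is congruent mod $b$ to a series whose support is controlled. For instance, its support would lie below $\gamma_0$ shifts, using the leading-term reduction $t^{\delta}\equiv -b_0^{-1}\sum_{n\ge1}b_nt^{\delta-\gamma_0+\gamma_n}$, which pushes exponents upward toward $0$. One iterates this transfinitely; the well-ordering of supports ensures termination. The result is a normal form $r(x)$ with $x\equiv r(x)\pmod b$ and $r(x)=0$ iff $b\mid x$. Then $b\mid xy$ means $r(r(x)r(y))=0$. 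One must show this implies $r(x)=0$ or $r(y)=0$, presumably by comparing leading terms or by an $\omega$-type "degree" argument. Here $r(x)r(y)$ would have a nonzero top component unless one factor vanishes, exactly as for polynomials of degree less than $\deg b$.

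\textbf{Main obstacle.} The hard part is the last step: the normal-form remainders do not form a finite-dimensional space, so the polynomial-style degree argument must be replaced by an estimate controlled by the valuation $v_J$. One must also handle $b+1$, where the constant term interacts with the reduction. I would try to show that the quotient $K((\mathbb{R}^{\le0}))/(b)$ embeds into a field of generalized power series, for example by sending $t$ to a root-like element and using that the exponents $\gamma_n$ accumulate only at $0$. This would give a domain and hence primality.
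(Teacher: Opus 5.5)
There is a genuine gap. The step that carries the whole content of the theorem is only asserted, and the tool you propose for it cannot detect primality.

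Your normal form is fine as far as it goes. More cleanly: compute $x/b$ in $K((\mathbb{R}))$ and let $q$ be its truncation to non-positive exponents. Then $x=qb+r$ with $\mathrm{supp}(r)\subseteq(\gamma_0,0]$. This $r$ is unique, because a nonzero multiple $cb$ has least exponent $\min\mathrm{supp}(c)+\gamma_0\le\gamma_0$.

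But such a division algorithm exists for \emph{every} series with negative leading exponent, prime or not. With $b=t^{-1}$, the monomial $t^{-1/2}$ is its own nonzero remainder, yet $t^{-1/2}\cdot t^{-1/2}=b$. Likewise, if $b=b'b''$ with $b',b''\in P_1$, then $b'$ and $b''$ are their own nonzero remainders modulo $b$, and their product is $b$. So no comparison of ``top components'' or degrees of $r(x)r(y)$ can by itself exclude $b\mid r(x)r(y)$. The polynomial analogy breaks because remainders are not of smaller ``degree'' in any multiplicatively meaningful sense.

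What must be used is that $\mathrm{supp}(b)$ has order type $\omega$ and supremum $0$, and your argument never uses this at the decisive point. The fallback of embedding $K((\mathbb{R}^{\le 0}))/(b)$ into a field merely restates that $(b)$ is prime. Moreover, no substitution homomorphism $t\mapsto$ ``root of $b$'' is available for series with real exponents and infinite support.

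The missing idea is the route the paper attributes to Pitteloud: work in the $RV$ structure of the semi-valuation $v$. Two corrections to your setup here. First, $v(xy)=v(x)\oplus v(y)$ uses the Hessenberg sum. Second, value $0$ means lying in $(J+K)\setminus J$, not having finite support; for example, $1+t^{-1}$ has value $0$ and is not a unit, so your account of irreducibility is also inaccurate.

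With this in place, one first proves Proposition \ref{RVPrime}: if $v(B)=1$ and $B\mid CD$ in $RV$, then $B\mid C$ or $B\mid D$. The natural tools are the translated truncations, which act as generalized coefficients (Proposition \ref{conv}) and obey a Leibniz rule modulo lower value (Proposition \ref{leib}). These turn divisibility by an order-type-$\omega$ germ into linear relations among coefficients, and this is where $\mathrm{ot}(b)=\omega$ and $\sup(b)=0$ genuinely enter.

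One then lifts to the ring, for instance by descent on the value. From $xy=bc$ one gets, say, $rv(b)\mid rv(x)$, and writes $x=bd+x_1$ with $v(x_1)<v(x)$. Then $b\mid x_1y$, and one descends on the ordinal value. The residual cases where $x_1$ has value $0$ or $-\infty$ must be handled separately, using $\sup(b)=0$.
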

In \cite{LM2017} it was proved that for every $b \in P_2 \cup P_3$ both $b, b+1$ have a unique factorization into irreducible elements. Apart from this rather direct consequence of the theorem above, no non-trivial instances (that is, non irreducible elements) of unique factorization were previously known.

In current work we generalize the above and prove the following theorem:
\begin{theorem}
Let $\alpha$ be an ordinal of the following forms:
\begin{enumerate}
\item $\alpha = \omega^{\beta_1}+\omega^{\beta_2}$ where $\beta_1 \ge \beta_2 $
\item $\alpha = \omega^{\beta_1}+\omega^{\beta_2}+1$ where $\beta_1 \ge \beta_2$ 
\item $\alpha = \sum_{i=1}^n \omega^{\beta_i}  $ where $\beta_1 > \ldots > \beta_n $
\end{enumerate}
If $b\in P_{\alpha}$ then $b,b+1$ admit a unique factorization into irreducible elements.
\end{theorem}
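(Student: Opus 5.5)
The plan rests on the multiplicativity of order types and on a valuation-theoretic passage to the graded ring. By Berarducci's results, $ot(bc)$ for series with supremum of support $0$ is governed by the natural (Hessenberg) product. Concretely, if $ot(b)=\omega^{\alpha}$ then $\alpha$ is the natural sum of the exponents $\alpha_i$ of the factors. For $\alpha$ of one of the three listed forms, the possible decompositions of $\alpha$ as a natural sum are therefore very restricted. In forms (1) and (3) the summands are distinct powers $\omega^{\beta_i}$, or two powers in the case $\beta_1=\beta_2$. Form (2) adds a single extra $1$.

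So in any factorization of $b$ (or $b+1$) into irreducibles, not divisible by monomials, each factor lies in some $P_{\gamma}$, or is $1$ plus such a series, with $\gamma$ a sub-natural-sum of $\alpha$. The first step is this bookkeeping. I will check that every factor has order type $\omega^{\omega^{\beta}}$ or $\omega^{1}$, or a product of these. By Berarducci's theorem the former are irreducible, and by Pitteloud (Theorem~\ref{Prime}) the latter are prime. Factors of type $P_1$ can then be cancelled using primality. This reduces case (2) to case (1) and leaves only factors of type $\omega^{\omega^{\beta_i}}$.

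The core step is uniqueness among these factors. Suppose $b=c_1\cdots c_n=d_1\cdots d_m$ with the $c_i,d_j$ irreducible of principal types. Order types force $n=m$ and a matching of types. I then pass to the graded ring via the $RV$ map, relative to a suitable convex valuation $v_\beta$ on exponents that distinguishes the ordinal levels $\omega^{\omega^{\beta}}$. Writing $\beta$ for the relevant $\beta_i$, I take $v_\beta$ to record the leading behavior of the tail of the support at level $\omega^{\omega^{\beta}}$. The leading term under $RV$ is multiplicative. When the $\beta_i$ are distinct, the factor of largest type is determined up to a unit and an adjustment of its lower-level terms: its $RV$-image equals $RV(b)$ divided by the images of the lower factors, and those images are units at the top level. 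So $c_1$ and $d_1$ agree up to something of smaller order type. I then show that $c_1\mid d_1 \cdot(\text{lower factors})$ forces $c_1\sim d_1$. If $c_1$ instead divided a product of factors of strictly smaller order types, those order types would be too small to carry a factor of type $\omega^{\omega^{\beta_1}}$. Cancelling $c_1$ and inducting on $n$ finishes case (3).

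The main obstacle is case (1) with $\beta_1=\beta_2$. Here two factors have the same type, so order types alone cannot separate them, and $c_1c_2=d_1d_2$ must be handled directly. I would take $RV$ at the level $\omega^{\omega^{\beta}}$. In the associated graded ring the images behave like polynomials in a transcendental-type tail variable, of degree at most $2$. This reduces uniqueness to unique factorization of a degree-$2$ object over a field. Irreducibility of $c_i,d_j$ transfers to their $RV$-images because of Berarducci's irreducibility, and this gives $\{RV(c_1),RV(c_2)\}=\{RV(d_1),RV(d_2)\}$. I would then lift to equality up to units by an approximation argument: if $c_1$ and $d_1$ share a leading $RV$ but are not associates, then $\gcd$-type considerations produce a factor of $b$ of an impossible order type. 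The case of $b+1$ goes the same way, since $RV(b+1)$ at these levels coincides with that of $b$ apart from the constant term, and that term is handled by the same order-type constraints.
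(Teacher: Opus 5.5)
There is a genuine gap at your first step. Order types only tell you that the values $v$ of the factors form a natural-sum decomposition of $\alpha$. They do not tell you that a germ-like factor of non-principal value must split further. Irreducible elements of non-principal value do exist (already in $P_2$), and once $cc(b)\ge 3$ nothing in your argument stops them from appearing as factors. So ``order types force $n=m$ and a matching of types'' fails.

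For $\alpha=\omega^{3}+\omega^{2}+\omega$, your bookkeeping does not exclude the following factorizations coexisting, all with the same Hessenberg sum:
\begin{enumerate}
\item $b=p_1p_2$ with $p_1$ irreducible of value $\omega^3+\omega$ and $v(p_2)=\omega^2$;
\item $b=q_1q_2q_3$ with principal factors;
\item $b=q_1q_2$ with $q_2$ irreducible of value $\omega^2+\omega$.
\end{enumerate}
Likewise, in case (2) a factorization need not contain a factor of value $1$ (e.g.\ factors of values $\omega^{\beta_1}+1$ and $\omega^{\beta_2}$). So cancelling $P_1$-primes does not reduce (2) to (1).

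Excluding these mixed factorizations is the heart of the paper. Theorem \ref{strictdec} proves unique factorization in $RV$ for factors whose $rv$-images are merely irreducible, of arbitrary value, by induction on Cantor complexity. For $\gamma\in Res(b)\setminus Big^{v^r(b)}(r)$, which exist arbitrarily close to $0$ by Corollary \ref{litBig}, Lemma \ref{strprin} gives $p_\gamma p_2\cdots p_m=q_\gamma q_2\cdots q_n+r_\gamma$ with $v(r_\gamma)<v^r(b)$, and the induction hypothesis applies. If no factors match, reassembling over $\gamma$ writes $rv(q_1)$, up to a constant, as $rv(\sum k_\gamma a_\gamma t^\gamma)\,rv(p_2)\cdots rv(p_m)$, contradicting irreducibility. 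Proposition \ref{limsuc} does the same for $2\omega^{\beta}+1$ via Lemma \ref{ResRV}.

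Even where all factors are principal, your remaining steps are asserted rather than proved.
\begin{itemize}
\item ``Lower factors are units at the top level'' corresponds to no valuation available here. The fact that $rv(c_1b_1)=rv(c_2b_2)$ forces $rv(c_1)=k\,rv(c_2)$ and $rv(b_1)=k^{-1}rv(b_2)$ is Proposition \ref{2prin}, which needs its own induction via translated truncations.
\item Your lifting step ``$c_1\mid d_1\cdot(\text{lower factors})$ forces $c_1\sim d_1$'' silently uses primality of $c_1$. That is known only for $P_1$; for $P_{\omega^\beta}$ with $\beta>0$ one has only irreducibility. ``Gcd-type considerations'' are likewise unavailable. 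The paper instead writes $p_1=k(q_1+r_1)$, $p_2=k^{-1}(q_2+r_2)$ and kills $r_1,r_2$ by truncating at $\max(\sup r_1,\sup r_2)$ and applying Proposition \ref{conv}. This uses either $cf(q_1)\cap cf(q_2)=\emptyset$ (Proposition \ref{fromrv}) or additive principality (Lemmas \ref{fromRVPrin}, \ref{sucRV}).
\item For $\beta_1=\beta_2$, the ``degree-$2$ polynomial'' picture is only a heuristic. The actual argument is Lemma \ref{span} plus a $K$-linear-independence analysis of $a_\gamma rv(b)+b_\gamma rv(a)=c_\gamma rv(d)+d_\gamma rv(c)$, which comes from Proposition \ref{leib}.
\end{itemize}
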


A main ingredient of the proof is the $RV$ structure of the semi-valuation introduced in \cite{Ber}. We prove that for every $\alpha$ as above unique factorization holds in the $RV$ and then show that, for the ordinals under consideration, this property lifts to $K((\mathbb{R}^{\le 0}))$. We introduce a notion of \emph{Cantor complexity}, defined in terms of the Cantor normal form of the ordinal. This invariant provides the necessary induction parameter: unique factorization in the $RV$-structure for ordinals of a given complexity is reduced to the corresponding statement for strictly lower complexity.

\section{Preliminaries}
We assume familiarity of the reader with the class of ordinals and the classical (non-commutative) operations on them. We recall that an ordinal is \emph{additively principal} if it cannot be represented as a sum of two smaller ordinals. 
\begin{definition}
Let $\alpha$ an ordinal and let $\beta$ the maximal ordinal such that $\alpha \ge \omega^{\beta} $. Then we define $deg(\alpha)=\beta$.
\end{definition}

For every $\alpha$ there is a unique $\gamma$ such that $\alpha = \omega^{deg(\alpha)}+\gamma$. Repeating the above for $\gamma$ allows us to obtain a representation $\alpha = \omega^{\beta_1}+ \ldots + \omega^{\beta_n} $ where $\beta_1 \ge \ldots \ge \beta_n $. This is the \emph{Cantor Normal Form} of $\alpha$. The definition of Cantor Normal Form of an ordinal gives rise to a natural commutative sum and product operations of ordinals. 

Given $\alpha = \omega^{\gamma_1} + \omega^{\gamma_2} + \ldots + \omega^{\gamma_n}$ and $\beta = \omega^{\gamma_{n+1}} + \omega^{\gamma_{n+2}} + \ldots + \omega^{\gamma_{n+m}}$ in Cantor Normal Form, let $\pi$ be a permutation of the integers $1, \ldots, n+m$ such that $\gamma_{\pi(1)} \geq \ldots \geq \gamma_{\pi(n+m)}$. Then the \emph{Hessenberg sum}, denoted by $\alpha \oplus \beta$, is defined to be $\omega^{\gamma_{\pi(1)}}+ \ldots + \omega^{\gamma_{\pi(n)}} $. The \emph{Hessenberg product}, denoted by $\alpha\odot\beta $ is defined to be \[\bigoplus_{1\le i \le m, 1\le j \le n}\omega^{\alpha_i\oplus \beta_j} \] where $\omega^{\alpha_1}+\ldots + \omega^{\alpha_m} $, $\omega^{\beta_1}+\ldots +\omega^{\beta_n} $ are the cantor normal forms of $\alpha, \beta$ compatibly. 

We note that for every $\alpha,\beta$ we have:
\begin{itemize}
\item $\alpha+\beta \le \alpha \oplus \beta$
\item $\alpha\beta \le \alpha \odot \beta$
\item If $\alpha$ is multiplicatively principal, $\beta$ additively principal and $\beta \le \alpha$ then $\alpha \odot \beta = \alpha\beta $

\end{itemize}
\begin{lemma} \label{admul} \cite{Ber}
If $B,C$ are well ordered subsets of some ordered abelian group then the following holds:
\begin{itemize}
\item $ot(B\cup C)\le ot(B)\oplus ot(C) $
\item $ot(B+C) \le ot(B)\odot ot(B) $

\end{itemize}

\end{lemma}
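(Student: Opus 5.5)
We prove the two items separately. The second item has a typo: it should read $ot(B+C)\le ot(B)\odot ot(C)$.

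\textbf{The union bound.} Write $\beta=ot(B)$ and $\gamma=ot(C)$. We argue by induction on the pair $(\beta,\gamma)$ using the recursive characterization of the Hessenberg sum,
\[
\beta\oplus\gamma=\sup\{(\beta'\oplus\gamma)+1,\ (\beta\oplus\gamma')+1:\ \beta'<\beta,\ \gamma'<\gamma\}.
\]
This identity follows from the Cantor normal form definition by comparing the normal forms term by term.

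For each $x\in B\cup C$, the initial segment $(B\cup C)_{<x}$ equals $B_{<x}\cup C_{<x}$. At least one of these two pieces is a proper initial segment of its set. Say it is $B_{<x}$, of order type $\beta'<\beta$, while $C_{<x}$ has order type $\gamma''\le\gamma$. By induction,
\[
ot\big((B\cup C)_{<x}\big)\le \beta'\oplus\gamma''\le\beta'\oplus\gamma,
\]
using monotonicity of $\oplus$. The order type of a well-ordered set is the supremum of $ot(\text{initial segment})+1$ over its elements, so $ot(B\cup C)\le\beta\oplus\gamma$.

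\textbf{The sumset bound.} This is the classical result of Carruth, which Berarducci reproves. We again use induction on $(\beta,\gamma)$, now with the recursive description
\[
\beta\odot\gamma=\sup\{(\beta'\odot\gamma)\oplus(\beta\odot\gamma')\ominus(\beta'\odot\gamma')+1\},
\]
interpreted through natural operations.

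A more usable route is the following. Fix $z\in B+C$ and consider the initial segment $(B+C)_{<z}$. Each element $b+c<z$ is to be charged to the ordinal pair $(ot(B_{<b}),ot(C_{<c}))$. Call an element of $B+C$ \emph{minimal representations-wise} if we choose, for each element, the representation $b+c$ with $b$ minimal.

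The key observation concerns a set $X=\{b_i+c_i\}$ of sums. If the pairs $(b_i,c_i)$ are pairwise incomparable or increasing in the product order, the sums cannot form a long decreasing chain. This gives an embedding of $B+C$, as an order, into a well-partial-order of type $B\times C$. The maximal linearization of a product of well-orders of types $\beta$ and $\gamma$ has type exactly $\beta\odot\gamma$ (de Jongh–Parikh). Any linear order on $B+C$ obtained as the image of a monotone surjection from $B\times C$ with the product order therefore has type at most the maximal linearization, $\beta\odot\gamma$.

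Concretely, the map $(b,c)\mapsto b+c$ is strictly increasing in the product order on $B\times C$. The order on $B+C$ pulls back along a choice of preimages, via lexicographic least $b$, to a linear extension of a suborder of $B\times C$. Its type is then bounded by the maximal order type of $B\times C$, which is $\beta\odot\gamma$.

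\textbf{Main obstacle.} The hard step is justifying that the chosen section, i.e.\ the subset of $B\times C$ with the induced order, is a \emph{linear extension} of the product order. Strict monotonicity gives this: if $(b,c)\le(b',c')$ componentwise, then $b+c\le b'+c'$. The remaining citation is that a sub-wpo of a wpo has maximal type at most that of the whole. That fact is standard and follows because linear extensions of the suborder extend to linear extensions of the whole.
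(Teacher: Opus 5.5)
The paper gives no proof of this lemma; it imports it from Berarducci. So your proposal supplies an argument where the paper has only a citation, and in substance the argument is correct. You are right that the second item should read $ot(B+C)\le ot(B)\odot ot(C)$; as printed it already fails for $B=\{0\}$ and $C$ infinite.

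Your union bound is a correct, self-contained induction on $(ot(B),ot(C))$ in the product order. It only uses strict monotonicity of $\oplus$ in each argument, not the full recursive identity.

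For the sumset bound, the argument that actually works is the one in your last two paragraphs:
\begin{enumerate}
\item Take a section $S\subseteq B\times C$ of $(b,c)\mapsto b+c$.
\item The order of $B+C$ transported to $S$ is a linear extension of the product order on $S$. Weak monotonicity of addition plus injectivity of the section suffice for this.
\item Extend it to a linear extension of $B\times C$.
\item Bound its type by de Jongh--Parikh, $o(B\times C)=ot(B)\odot ot(C)$.
\end{enumerate}
You should state explicitly that every linear extension of a wpo is well-founded (an infinite descending sequence would be a bad sequence). This is what makes the extension in step 3 a well-order. It also gives you for free that $B+C$ is well-ordered, which you otherwise assume tacitly.

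Compared with a direct inductive proof, your route is short and conceptual. A direct induction must control the overlap of $B_{<b_0}+C$ and $B+C_{<c_0}$ inside $(B+C)_{<b_0+c_0}$; the naive union bound already fails for $ot(B)=ot(C)=2$. Your route avoids that bookkeeping, but all the weight sits in the cited theorem on maximal order types. The step you call the main obstacle is routine.

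When writing it up, delete the parts that are unused and partly wrong:
\begin{itemize}
\item The $\ominus$ recursion is not meaningful as stated with ordinal subtraction.
\item The ``key observation'' is too vague to check.
\item The sentence saying $B+C$ embeds as an order into $B\times C$ is false if read literally. An order embedding would make $B+C$ a chain of $B\times C$. But chains of $\omega\times\omega$ have type at most $\omega$, whereas $B=C=\{-1/n : n\ge 1\}$ gives $ot(B+C)=\omega^2$.
\end{itemize}
The correct relation is the one you state afterwards: the order of $B+C$ extends the product order on the section, not the other way around.
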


We are now ready to define a semi-valuation which obtains values in $\langle ON\cup \{ -\infty\}, \oplus \rangle $. Let $J$ be the (proper) ideal of $K((\mathbb{R}^{\le 0}))$ generated by the series of the form $t^x$ for $x < 0$. From now on, let $v$ be the following: \\ 
\[ v(b) := \begin{cases}
    -\infty                                      & \text{if } b \in J,                   \\
    0                                      & \text{if } b \in (J + K) \setminus J, \\
    deg(\min\{ot(c) : c \equiv b \mbox { mod } J + K\}) & \text{otherwise}.
  \end{cases} \]
In the case that $b$ is principal, we note that $ot(b)=\omega^{v(b)} $. \\
$v$ is the additive version of the semi-valuation $v_J$ defined in \cite{Ber}. That is, $v(bc)=v(b)\oplus v(c) $. 

We define $J_{\alpha} = \{b \in K((\mathbb{R}^{\le 0})) : v(b) < \alpha \} $. clearly, $J_0 = J $. 

Let $B$ an ordered set. By \emph{almost every} $\gamma \in Supp(b)$ we would mean every $\gamma \in B$ beside a subset of order type strictly smaller than $ot(B)$.  
We recall that $rv(b)=rv(c)$ if and only if $v(b-c) < v(b) $, and $RV$ would be $K((\mathbb{R}^{\le 0}))$ modulo this equivalence relation. For $b,c \in P_{\alpha}$, $rv(b)=rv(c)$ is equivalent of saying that $b_{\gamma}= c_{\gamma}$ for almost every $\gamma \in Supp(b)$. 
 We define \[RV^{\alpha} = \{B \in RV : v(B) = \alpha\} \cup \{rv(0)\}.\] We note that $RV^{\alpha} = J_{\alpha+1} \mbox{ mod } J_{\alpha} $. 
In \cite{Pit}, in order to prove Theorem \ref{Prime}, the author proves first the following:
\begin{proposition} (\cite[Prop 3.2]{Pit}  ) \label{RVPrime}
Let $B \in RV$ such that $v(B)=1 $. Then for every $C,D \in RV$, if $B | CD$ then $B | C$ or $B | D$. 
\end{proposition}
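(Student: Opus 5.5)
This is Pitteloud's result, and the plan is to reconstruct the argument of \cite[Prop 3.2]{Pit} directly in the graded ring $RV$. The first step is to make the structure of $RV$ explicit. Since $v(bc)=v(b)\oplus v(c)$, multiplication descends to $RV$, and it sends $RV^{\alpha}\times RV^{\beta}$ into $RV^{\alpha\oplus\beta}$.

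Next I would reduce to principal representatives. Fix $B=rv(b)$ with $v(b)=1$. Replacing $b$ by the minimal-order-type representative modulo $J+K$, we may assume $ot(b)=\omega$ with $b$ principal, i.e.\ $b\in P_1$. Its support is then an increasing sequence $\gamma_0<\gamma_1<\dots$ converging to $0$. In $RV^1$, two such series are identified exactly when they agree at almost every support point, that is, up to finitely many terms.

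Suppose now $B\mid CD$ with $C=rv(c)$ and $D=rv(d)$, and write $v(c)=\alpha$, $v(d)=\beta$. Then $CD=B\cdot E$ with $v(E)=1\oplus\ldots$. The natural strategy is to look at the "leading $\omega^{\deg}$-block" of each series, meaning the part of the support of order type $\omega^{v}$ accumulating at the relevant limit points. One then shows that divisibility by $B$ in $RV$ is detected by a linear-recurrence or periodicity-type condition on the coefficients along the tail. Concretely, $B\mid C$ in $RV$ should mean that, after discarding a small set, $c$ decomposes as $\sum_i t^{\delta_i}\, b\cdot(\text{coefficient})$ along cofinal blocks.

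The key technical step is to show that, at the top degree, the product $cd$ behaves like a product of "germs at $0$". These germs form a ring in which $B$ corresponds to a prime element; for example, one can pass to a valuation-like map recording the asymptotic exponents near $0$ and compare it with a polynomial-ring situation in which $b$ becomes irreducible of degree one. The intended argument is by contradiction. Assume $B\nmid C$ and $B\nmid D$, take the remainders of $C$ and $D$ modulo $B$ with minimal $v$, and show that their product has nonzero residue modulo $B$. This contradicts $B\mid CD$.

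The main obstacle is establishing a division-with-remainder in $RV$ by $B$ of value $1$, together with the fact that remainders multiply without cancellation at the top value. I would try Berarducci's order-type estimates (Lemma \ref{admul}) to bound the value of remainders. Honestly, I am unsure this plan closes the proof without Pitteloud's specific combinatorics, so this step is where the argument is most likely to need additional input.
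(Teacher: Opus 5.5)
The paper does not prove this proposition; it imports it from Pitteloud, so there is no in-paper argument to match. Judged on its own, your proposal has a genuine gap. The reduction to $b\in P_1$ is fine, but every step that carries the content of the statement is only announced. Saying that the germs ``form a ring in which $B$ corresponds to a prime element'' is the conclusion itself. No map to a polynomial ring in which $b$ becomes linear is ever constructed. The description of $B\mid C$ as $c$ being a sum of translates $t^{\delta}b$ with coefficients is just $c=be$ unwound, not a criterion you can test. The division-with-remainder step, which you correctly flag as the crux, has no content here. If $B\nmid C$, then $v(c-bf)\ge v(c)$ for every $f$, since $v(c-bf)<v(c)$ would give $rv(c)=rv(bf)$, i.e.\ $B\mid C$. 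So reduction modulo $B$ never lowers the value, and in particular never yields a Euclidean remainder of value $<v(B)=1$; this is already impossible for any $C\in RV^1\setminus K^{\times}B$. Your ``minimal remainders'' of $C$ and $D$ are therefore just other elements $C',D'$ of the same values with $B\mid C'D'$. The final step, showing their product is nonzero modulo $B$, is literally the statement to be proved. Lemma \ref{admul} only bounds order types and cannot supply the non-cancellation you need.

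What is missing is an actual mechanism that detects divisibility by $B$. With the tools of this paper, the natural candidate is the calculus of translated truncations. Since $b\in P_1$ has support of type $\omega$, we have $b^{|\gamma}\equiv b_\gamma \pmod J$. Proposition \ref{leib} then gives $(be)^{|\gamma}\equiv b_\gamma e+b\,e^{|\gamma}\pmod{J_{v(e)}}$ for $\gamma$ close to $0$. So $x\mapsto x^{|\gamma}$ acts on successor-valued elements like a family of derivations lowering the value by one, and $B$ has constant derivatives $b_\gamma$. Moreover, the correct relation is $v(C)\oplus v(D)=1\oplus v(E)$, not the garbled $v(E)=1\oplus\ldots$. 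This is a successor, so one of $v(C),v(D)$, say $v(C)$, is a successor. Suppose, for example, that $v(D)$ is a limit. Then the truncations of $d$ are negligible, and comparing $(cd)^{|\gamma}$ with $(be)^{|\gamma}$ at two points $\gamma,\gamma'$ near $0$ gives $B\mid (b_{\gamma'}C_\gamma-b_\gamma C_{\gamma'})\,D$, where $C_\gamma$ is the class of $c^{|\gamma}$. Here $C$ has been replaced by something of strictly smaller value, which sets up an induction on $v(C)\oplus v(D)$. That induction still has to be closed by a reconstruction step in the spirit of Lemma \ref{ResRV}, recovering $B\mid C$ from information on almost all $C_\gamma$. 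The case where both $v(C)$ and $v(D)$ are successors also needs separate treatment. Your proposal contains neither this nor any substitute for it, so it is a plan whose essential step is missing rather than a proof.
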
 
In this work we also prove first unique factorization in the $RV$ in order to deduce from it unique factorization.

We give now several definitions and propositions that will serve us for the inductive argument in the proof of the main theorem of the article.  
\begin{definition}
Let $v(b) = \omega^{\alpha_1} + \ldots + \omega^{\alpha_n} $ Such that $\alpha_1 \ge \ldots \ge \alpha_n $. 
\begin{itemize}
\item  $n$ is the \emph{cantor complexity of $v$} denoted also by $cc(b)$. 
\item $cf(b) = \{\alpha_1, \ldots , \alpha_n \} $

\end{itemize}

\end{definition}
\begin{definition} \cite{Ber}
Let $v(b)=\sum_{i=1}^n \omega^{\alpha_i}$ where $\alpha_1 \ge \ldots \ge \alpha_n \ge 0 $. 
\begin{itemize}
\item The \emph{principal value of $b$}, denoted by $v^p(b)$, is defined to be $\omega^{\alpha_n} $
\item The \emph{residual value of $b$}, denoted by $v^r(b)$, is defined to be $ \sum_{i=1}^{n-1}\omega^{\alpha_i}$

\end{itemize}
\end{definition}
Clearly, if $v(a)=v^r(b)$ and $ v(b)\neq v^p(b)$ then $cc(b)= cc(a)+1 $. The following will allows us to find a suitable such $a$ for the inductive step, independently of the choice of the representative of $rv(b)$.  
\begin{definition}
  Given
  $b = \sum_{\beta} b_x t^{x} \in K((\mathbb{R}^{\leq 0}))$
  and $\gamma \in \mathbb{R}^{\leq 0}$, we define:
  \begin{itemize}
    \item the \textbf{truncation} of $b$ at $\gamma$ is
          $b_{\vert \gamma} := \sum_{x \leq \gamma } b_x t^{x}$,
    \item the \textbf{translated truncation} of $b$ at $\gamma$ is
          $b^{\vert \gamma} := t^{-\gamma} b_{\vert \gamma}$.
  \end{itemize}
\end{definition}
It turns out that translated truncations behave like a sort of
`generalized coefficients', as they satisfy the following equation.
\begin{proposition}[{\cite[Lemma 7.5(2)]{Ber}}] \label{conv}
  For all $a, b \in K((\mathbb{R}^{\leq 0}))$ and
  $\gamma \in \mathbb{R}^{\leq 0}$ we have:
  \[
    {(ab)}^{\vert\gamma} \equiv \sum_{\delta + \varepsilon = \gamma}
    a^{\vert\delta} b^{\vert\varepsilon} \mbox{ mod } J
  \]
\end{proposition}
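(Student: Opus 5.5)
We would prove this directly from the definitions, by comparing coefficients on both sides and showing that their difference is a series whose support has supremum strictly below $0$. Recall that a series lies in $J$ exactly when the supremum of its support is $<0$, since multiplying by $t^x$ with $x<0$ pushes the support below $x$, and finite sums preserve this. Consequently $a^{\vert\delta}\notin J$ only when $\delta$ lies in the closure $\overline{Supp(a)}$, and similarly for $b$.

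\emph{Step 1: the sum is finite modulo $J$.} Only pairs with $\delta\in \overline{Supp(a)}$ and $\varepsilon=\gamma-\delta\in\overline{Supp(b)}$ contribute modulo $J$. The closure of a well-ordered subset of $\mathbb{R}$ is again well-ordered, because a strictly decreasing sequence in the closure could be approximated by a strictly decreasing sequence in the set. So let $D$ be the set of such $\delta$. If $D$ were infinite, it would contain a strictly increasing sequence $\delta_n$. The corresponding $\varepsilon_n=\gamma-\delta_n$ would then be strictly decreasing in $\overline{Supp(b)}$, which is impossible. Hence $D$ is finite, and the right-hand side is the finite sum $\sum_{\delta\in D} a^{\vert\delta}b^{\vert\gamma-\delta}$ modulo $J$.

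\emph{Step 2: counting multiplicities.} Multiplying through by $t^{\gamma}$, it suffices to show that
\[(ab)_{\vert\gamma}-\sum_{\delta\in D}a_{\vert\delta}\,b_{\vert\gamma-\delta}\]
has support with supremum strictly less than $\gamma$. Expand both terms as sums of monomials $a_xb_y t^{x+y}$. Each pair $(x,y)\in Supp(a)\times Supp(b)$ with $x+y\le\gamma$ appears once in $(ab)_{\vert\gamma}$. It appears in the second sum $m(x,y)=\#\{\delta\in D: x\le\delta\le\gamma-y\}$ times. Since every pair counted in the second sum satisfies $x+y\le\gamma$, it is enough to show there is $\gamma'<\gamma$ such that $m(x,y)=1$ for every pair with $\gamma'<x+y\le\gamma$. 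Only finitely many pairs have a given sum, so the coefficient of $t^{s}$ in the difference then vanishes for all $s\in(\gamma',\gamma]$.

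\emph{Step 3: compactness argument (the main point).} Suppose no such $\gamma'$ exists. Then there are pairs $(x_n,y_n)$ with $x_n+y_n\le \gamma$, $x_n+y_n\to\gamma$, and $m(x_n,y_n)\neq 1$. Since $Supp(a)$ and $Supp(b)$ are well-ordered, we may pass to a subsequence in which both $x_n$ and $y_n$ are non-decreasing. Let $\delta^*=\lim x_n$ and $\varepsilon^*=\lim y_n$. Then $\delta^*\in\overline{Supp(a)}$, $\varepsilon^*\in\overline{Supp(b)}$ and $\delta^*+\varepsilon^*=\gamma$, so $\delta^*\in D$.

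For every $n$ we have $x_n\le\delta^*$ and $y_n\le\varepsilon^*$, so $\delta^*$ is counted in $m(x_n,y_n)$. Now take any other $\delta\in D$. If $\delta>\delta^*$, being counted would require $y_n\le\gamma-\delta<\varepsilon^*$; this fails eventually when $y_n\to\varepsilon^*$ strictly, and fails from the start if the sequence is eventually constant. If $\delta<\delta^*$, being counted would require $x_n\le\delta<\delta^*$, which likewise fails eventually. Since $D$ is finite, $m(x_n,y_n)=1$ for all large $n$, a contradiction.

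The care needed is in the degenerate cases where $x_n$ or $y_n$ is eventually constant, so the limit is attained. These are handled by the same inequalities, because then $\delta^*$ or $\varepsilon^*$ lies in the support itself.
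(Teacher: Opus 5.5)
Your proof is correct. The paper gives no argument of its own for this statement; it is quoted directly from Berarducci's Lemma 7.5(2). So your proposal is a self-contained elementary proof, not a reconstruction of one in the text.

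All three ingredients check out:
\begin{itemize}
\item Finiteness of $D$: closures of well-ordered subsets of $\mathbb{R}$ are well-ordered, and an infinite $D$ would give a strictly decreasing sequence in $\overline{Supp(b)}$.
\item Reduction to multiplicities: after multiplying by $t^{\gamma}$, the problem becomes bookkeeping of the counts $m(x,y)$.
\item The sequential compactness argument: it forces $m(x,y)=1$ for all pairs with $x+y\in(\gamma',\gamma]$. This also makes precise the meaning of the infinite sum: only the finitely many terms indexed by $D$ survive modulo $J$.
\end{itemize}

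Two small points.

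First, in Step 1 and at the very end you use one direction of your description of $J$: a series whose support has supremum $s<0$ lies in $J$. The reason you give only proves the other inclusion. The direction you need is immediate from $c=t^{s}\cdot(t^{-s}c)$ with $t^{-s}c\in K((\mathbb{R}^{\le 0}))$, but it should be stated.

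Second, the special treatment of eventually constant sequences in Step 3 is unnecessary. Since $y_n\to\varepsilon^*>\gamma-\delta$ and $x_n\to\delta^*>\delta$ in the respective cases, convergence alone shows that every other element of $D$ is eventually not counted. Also, ``fails from the start'' should read ``fails eventually'', because the sequence is only eventually constant.
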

And one further more may obtain a kind of a Leibniz rule.
\begin{proposition} [{\cite[Lemma 7.7]{Ber}}] \label{leib}
  Let $b,c \in K((\mathbb{R}^{\le 0}))$ such that \\$v^p(b) \le v^p(c)$. Then for every $\gamma$ sufficiently close to $0$ we have \[(bc)^{|\gamma} = b^{|\gamma}c+c^{|\gamma}b +r \] where $v(r) < v^r(b) \oplus v(c) < v(bc)$.
\end{proposition}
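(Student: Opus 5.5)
The plan is to start from the convolution formula of Proposition \ref{conv}, isolate the two boundary terms of the sum, and show that the remaining ``cross'' terms have small value. Since $b^{|0}=b$ and $c^{|0}=c$, the terms with $\delta=0$ and with $\varepsilon=0$ in
\[
(bc)^{|\gamma} \equiv \sum_{\delta+\varepsilon=\gamma} b^{|\delta}c^{|\varepsilon} \mbox{ mod } J
\]
give exactly $b\,c^{|\gamma}+b^{|\gamma}c$. Only finitely many pairs contribute modulo $J$, because $b^{|\delta}\in J$ as soon as $\delta\notin Supp(b)$. So I would set
\[
r := (bc)^{|\gamma}-b^{|\gamma}c-c^{|\gamma}b,
\]
which is $\equiv \sum b^{|\delta}c^{|\varepsilon} \mbox{ mod } J$, the sum running over $\delta,\varepsilon\in(\gamma,0)$ with $\delta+\varepsilon=\gamma$. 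Elements of $J$ have value $-\infty$ and $v$ is subadditive on sums, so it suffices to bound $v$ of each cross term $b^{|\delta}c^{|\varepsilon}$ and of their finite sum.

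The key estimate concerns translated truncations near $0$. Write $v(b)=\omega^{\alpha_1}+\ldots+\omega^{\alpha_n}$ and take a representative of $b$ modulo $J+K$ of minimal order type. A final segment of an ordinal with this Cantor normal form, starting close enough to its end, has order type $\omega^{\alpha_n}$. Hence for $\gamma$ close to $0$ we have:
\begin{itemize}
\item $Supp(b)\cap(\gamma,0)$ has order type at most $\omega^{\alpha_n}=\omega^{v^p(b)}$;
\item for every $\delta\in(\gamma,0)$, the truncation $b_{|\delta}$ has support of order type strictly less than $\omega^{v(b)}$ but at least that of the residual part.
\end{itemize}
From this I would derive $v(b^{|\delta})\le v^r(b)$ for $\delta\in(\gamma,0)$, up to the error absorbed in $J+K$, and likewise $v(c^{|\varepsilon})\le v^r(c)$. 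The translation by $t^{-\delta}$ does not change order types, and the support of $b_{|\delta}$ misses the whole final block of type $\omega^{\alpha_n}$.

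Then $v(b^{|\delta}c^{|\varepsilon})=v(b^{|\delta})\oplus v(c^{|\varepsilon})$, which is at most $v^r(b)\oplus v^r(c)$. I would compare this with $v^r(b)\oplus v(c)$. Since $v(c)=v^r(c)\oplus\omega^{v^p(c)}$, we have $v^r(b)\oplus v^r(c)<v^r(b)\oplus v(c)$. Also $v^r(b)\oplus v(c)<v(b)\oplus v(c)=v(bc)$, because $v(b)=v^r(b)\oplus\omega^{v^p(b)}$. This is where the hypothesis $v^p(b)\le v^p(c)$ enters: the cruder bound $v(b^{|\delta})\oplus v(c^{|\varepsilon})$ has to be beaten uniformly, and the smaller principal block must be the one that gets removed. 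The finite sum then also has value $<v^r(b)\oplus v(c)$, by the union bound of Lemma \ref{admul}.

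The main obstacle is the truncation estimate itself, for two reasons. First, $v$ is defined by minimizing over representatives modulo $J+K$, so I must check that truncating and translating a minimal representative still computes (an upper bound for) $v(b^{|\delta})$. Second, I must handle the degenerate case where the block $\omega^{\alpha_n}$ is not cofinal in $0$, for example when $0\in Supp(b)$ or the last block has order type $+1$. There I would first replace $b$ by $b-b_0$ and use that constants do not affect $v$. If the strict bound $v(b^{|\delta})\le v^r(b)$ fails in general, I would try the weaker bound $v(b^{|\delta})<v(b)$ with degree below $\omega^{\alpha_n}$ in the last block, and push the strictness through the Hessenberg sum using $v^p(b)\le v^p(c)$.
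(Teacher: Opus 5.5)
There is a genuine gap in your key truncation estimate. The frame itself is right: apply Proposition~\ref{conv}, peel off the terms $\delta=0$ and $\varepsilon=0$, and bound the finitely many cross terms. The problem is that you read the Cantor normal form of $v(b)$ as if $v(b)$ were the order type of a minimal representative. That order type is $\omega^{v(b)}$, which is additively principal. So every tail $Supp(b)\cap(\gamma,0)$ near $0$ has the full type $\omega^{v(b)}$, and there is no final block of type $\omega^{\alpha_n}$ to discard. Already for $b\in P_2$ the tails have type $\omega^2$, not $\omega^{\alpha_n}=1$ or $\omega^{v^p(b)}=\omega$.

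Consequently the bound $v(b^{|\delta})\le v^r(b)$ for all $\delta$ near $0$ is false as soon as $v^p(b)>1$. For $b\in P_\omega$ one has $v^r(b)=0$. Yet every tail of $Supp(b)$ contains a point $\delta<0$ that is the supremum of an $\omega$-sequence of the support, and there $v(b^{|\delta})=1$. In general, truncations arbitrarily close to $0$ realize every value below $v(b)$. Your bound is correct only when $v^p(b)=1$, which is exactly why the paper needs $Res$, $Big^{\alpha}$ and Corollary~\ref{litBig}.

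A symptom is that your main line never uses $v^p(b)\le v^p(c)$, although the statement is false without it. Take $v(b)=\omega$, $v(c)=1$ and generic supports. Then there are $\gamma$ arbitrarily close to $0$ with a single cross term $b^{|\delta}c^{|\varepsilon}$ of value $v(b^{|\delta})=1=v^r(b)\oplus v(c)$.

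Your fallback is the correct route and should be the main argument. Choose $\gamma$ so close to $0$ that $Supp(b)\cap(\gamma,0)$ has type $\omega^{v(b)}$, and similarly for $c$. Then for $\delta\in(\gamma,0)$, modulo $J$ the series $b^{|\delta}$ depends only on the support just below $\delta$. That is a proper initial segment of a set of type $\omega^{v(b)}$, so $v(b^{|\delta})<v(b)$, and likewise $v(c^{|\varepsilon})<v(c)$.

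What remains is an ordinal fact you did not verify: if $x<v(b)$, $y<v(c)$ and $v^p(b)\le v^p(c)$, then $x\oplus y<v^r(b)\oplus v(c)$. Write $\beta=v^r(b)$, $\kappa=v^r(c)$, $v^p(b)=\omega^p$ and $v^p(c)=\omega^q$ with $p\le q$.
\begin{itemize}
\item If $x<\beta$, strict monotonicity of $\oplus$ gives $x\oplus y<\beta\oplus v(c)$.
\item Otherwise $x=\beta\oplus\xi$ with $\xi<\omega^p\le\omega^q$, since all exponents of $\beta$ are $\ge p$. It then suffices that $\xi\oplus y<\kappa\oplus\omega^q=v(c)$.
\item If $y<\kappa$, this last inequality is monotonicity.
\item If $y=\kappa\oplus\eta$ with $\eta<\omega^q$, it follows from $\xi\oplus\eta<\omega^q$, because $\omega^q$ is closed under $\oplus$. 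This is exactly where $p\le q$ enters.
\end{itemize}

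A small correction as well: $b^{|\delta}\notin J$ whenever $\delta$ lies in $Supp(b)$ \emph{or} is a supremum of a subset of it, and points of $Res(b)$ are typically of the latter kind. The contributing $\delta$ therefore range over this closure. Finiteness of the relevant pairs still holds, because the closure of a well-ordered subset of $\mathbb{R}$ is well ordered.
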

The following Lemma gives a clear candidate for the inductive step:
\begin{lemma} \label{strprin} \cite{FLLM}
  Suppose $v^p(q)>v^p(p)>1 $. Then for every $\gamma$ close enough to $0$ we have $(pq)^{|\gamma} = p^{|\gamma}q \mbox{ mod } J_{v^r(pq)} $.
\end{lemma}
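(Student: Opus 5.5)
The statement to prove is Lemma \ref{strprin}: if $v^p(q)>v^p(p)>1$, then for $\gamma$ close enough to $0$ we have $(pq)^{|\gamma}\equiv p^{|\gamma}q \bmod J_{v^r(pq)}$. I would derive it from the Leibniz rule, Proposition \ref{leib}. Since $v^p(p)\le v^p(q)$, that proposition applies with $b=p$, $c=q$ and gives, for every $\gamma$ sufficiently close to $0$,
\[(pq)^{|\gamma}=p^{|\gamma}q+q^{|\gamma}p+r, \qquad v(r)<v^r(p)\oplus v(q).\]
So it suffices to show two things: that $v^r(p)\oplus v(q)\le v^r(pq)$, which disposes of $r$, and that $v(q^{|\gamma}p)<v^r(pq)$ for $\gamma$ close to $0$.

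\textbf{The bound on $r$.} Write $v(p)=v^r(p)\oplus\omega^{\alpha}$ with $\omega^{\alpha}=v^p(p)$, and let $v^p(q)=\omega^{\beta}$ with $\beta>\alpha$. The Cantor normal form of $v(pq)=v(p)\oplus v(q)$ is the merged multiset of exponents. Its smallest exponent is $\alpha$, because $\alpha$ is strictly below every exponent of $v(q)$, and it is not below any exponent of $v^r(p)$. Hence $v^p(pq)=\omega^\alpha$ and $v^r(pq)=v^r(p)\oplus v(q)$. This gives the bound on $r$ directly, indeed with equality.

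\textbf{The term $q^{|\gamma}p$.} Here $v(q^{|\gamma}p)=v(q^{|\gamma})\oplus v(p)$. The needed input is a fact about translated truncations: for $\gamma$ close to $0$,
\[v(q^{|\gamma})\oplus v^p(q)\le v(q), \quad\text{that is,}\quad v(q^{|\gamma})\le v^r(q)\oplus\omega^{\delta}\ \text{ for any fixed } \delta<\beta .\]
The reason is that truncating near the supremum $0$ strips off a final segment of the support, and this removes the last Cantor summand $\omega^\beta$ of the relevant order type, up to $J+K$. I would take this from \cite{Ber}; it is the same fact underlying Proposition \ref{leib}. With $v(p)=v^r(p)\oplus\omega^\alpha$ and $v(q)=v^r(q)\oplus\omega^\beta$, we get
\[v(q^{|\gamma}p)\le v^r(q)\oplus\omega^{\delta}\oplus v^r(p)\oplus\omega^{\alpha}\quad\text{for any } \delta<\beta .\]
Choose $\delta=\alpha$ if the bound allows truncations of type $<\omega^\beta$ arbitrarily close to $\alpha$. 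Then $\omega^\alpha\oplus\omega^\alpha<\omega^\beta$, because $\alpha<\beta$, so the whole expression is $<v^r(p)\oplus v(q)=v^r(pq)$. The hypothesis $v^p(p)>1$, i.e. $\alpha\ge 1$, should be what guarantees that $v(q^{|\gamma})$ drops below $v^r(q)\oplus\omega^{\alpha+1}$, rather than only below $v(q)$ by some finite amount.

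\textbf{Main obstacle.} The hard step is establishing this precise bound on $v(q^{|\gamma})$. The naive bound $v(q^{|\gamma})<v(q)$ is not enough: it only yields $v(q^{|\gamma}p)<v(q)\oplus v(p)$, which is $v(pq)$, not $v^r(pq)$. I would try to prove the sharper bound by decomposing $\mathrm{Supp}(q)$ near $0$. Since $ot(\mathrm{Supp}(q)\cap(\gamma,0])$ is a tail, hence of type $\omega^\beta$, the part of $q$ below $\gamma$ has order type at most $v^r(q)$ plus something of degree less than $\beta$. I would then use the freedom to choose $\gamma$ among the limit points of degree $\le\alpha$ that accumulate at $0$. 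This is also where $\alpha\ge1$ is needed, to have such $\gamma$ arbitrarily close to $0$.
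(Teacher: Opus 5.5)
The paper cites this lemma without proof, so I assess your argument on its own. Your reduction is sound: Proposition \ref{leib} applies with $b=p$, $c=q$, and $v^r(pq)=v^r(p)\oplus v(q)$ correctly disposes of the error term. The gap is in the step on $q^{|\gamma}p$. The input you propose is false: neither $v(q^{|\gamma})\oplus v^p(q)\le v(q)$ (equivalently $v(q^{|\gamma})\le v^r(q)$) nor $v(q^{|\gamma})\le v^r(q)\oplus\omega^{\delta}$ for a fixed $\delta<\beta$ holds in general. For a counterexample, take $q$ principal with $v(q)=\omega^\beta$, so $v^r(q)=0$ and $Supp(q)$ has type $\omega^{\omega^\beta}$. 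For $\nu<\omega^\beta$ let $\gamma_\nu<0$ be the supremum of the initial segment of $Supp(q)$ of type $\omega^\nu$. Then $v(q^{|\gamma_\nu})=\nu$, and $\gamma_\nu\to 0$ as $\nu\to\omega^\beta$. So $v(q^{|\gamma})$ is unbounded below $\omega^\beta$ for $\gamma$ arbitrarily close to $0$; this is the phenomenon the paper handles with $Big^{\alpha}$, Lemma \ref{bpaf} and Corollary \ref{litBig}. Your fallback of selecting special $\gamma$ of low degree would not rescue this either, since the lemma asserts the congruence for \emph{every} $\gamma$ close to $0$.

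The missing idea is that the bound you dismissed as naive already suffices. For $\gamma$ close to $0$ one has $v(q^{|\gamma})<v(q)$. Indeed, below $\gamma$ the support of $q$ meets the final segment of type $\omega^{v(q)}$ in a proper initial segment of some type $\xi<\omega^{v(q)}$, and $v(q^{|\gamma})$ is at most the last Cantor exponent of $\xi$. Write $v(q)=v^r(q)+\omega^\beta=v^r(q)\oplus\omega^\beta$ and $\mu=v(q^{|\gamma})<v(q)$. There are two cases.
\begin{itemize}
\item If $\mu<v^r(q)$, then $\mu\oplus\omega^\alpha<v^r(q)\oplus\omega^\beta$ by strict monotonicity of $\oplus$.
\item Otherwise $\mu=v^r(q)+\nu=v^r(q)\oplus\nu$ with $\nu<\omega^\beta$, since all exponents of $\nu$ are $<\beta$. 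Then $\nu\oplus\omega^\alpha<\omega^\beta$, because the ordinals below $\omega^\beta$ are closed under $\oplus$ and $\alpha<\beta$.
\end{itemize}
In both cases $\mu\oplus\omega^\alpha<v(q)$, hence
\[
v(q^{|\gamma}p)=\bigl(v(q^{|\gamma})\oplus\omega^{\alpha}\bigr)\oplus v^r(p)<v(q)\oplus v^r(p)=v^r(pq)
\]
for every $\gamma$ close to $0$, and your argument closes with no selection of $\gamma$. With Proposition \ref{leib} as stated, this step uses only $v^p(p)<v^p(q)$. In particular, the hypothesis $v^p(p)>1$ is not what controls $v(q^{|\gamma})$, contrary to your guess.
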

A special case is then $v(p)< v^p(q)$. Then we obtain the following:
\begin{lemma} [{\cite[Lemma 4.7]{FLLM}]}]\label{smallder} 
Let $a,b \in K((\mathbb{R}^{\le 0}))$ such that \\ $v_J(a)<v_J^p(b)$. Then for every $\gamma$ close enough to $0$ we have \\$(ab)^{|\gamma} = a^{|\gamma}b$ mod $J_{deg_J(b)}$.
\end{lemma}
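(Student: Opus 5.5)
The plan is to work directly from the convolution formula of Proposition \ref{conv} rather than from the Leibniz rule of Proposition \ref{leib}. Proposition \ref{leib} only bounds its error term by $v^r(a)\oplus v(b)$, which need not lie below $v(b)$, so it seems too weak for the modulus here. I read $J_{deg_J(b)}$ as $J_{v(b)}$, with $v_J,v_J^p$ the semi-valuations $v,v^p$ used above. Write $v(b)=\omega^{\alpha_1}+\ldots+\omega^{\alpha_n}$ in Cantor normal form, so that $v^p(b)=\omega^{\alpha_n}$ and $v(a)<\omega^{\alpha_n}$. Proposition \ref{conv} gives
\[
(ab)^{|\gamma}\equiv \sum_{\delta+\varepsilon=\gamma} a^{|\delta}b^{|\varepsilon} \mbox{ mod } J .
\]
Since $J=J_0\subseteq J_{v(b)}$, it suffices to isolate the term with $\varepsilon=0$, which is exactly $a^{|\gamma}b$ because $b^{|0}=b$, and to show that all remaining terms together lie in $J_{v(b)}$.

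The first step is an estimate on translated truncations of $b$ near $0$. For $\varepsilon<0$ sufficiently close to $0$, the series $b_{|\varepsilon}$ omits a final segment of $Supp(b)$. I will show that $v(b^{|\varepsilon})<v(b)$, and more precisely that
\[
v(b^{|\varepsilon})\le v^r(b)\oplus\eta
\qquad\text{for some }\eta<\omega^{\alpha_n}.
\]
This is the standard fact that cutting $b$ before its last principal block lowers the last Cantor summand. To prove it, choose a representative $c\equiv b \mbox{ mod } J+K$ of minimal order type. For $\varepsilon$ close to $0$, the part of $Supp(c)$ below $\varepsilon$ has order type $\omega^{\alpha_1}+\ldots+\omega^{\alpha_{n-1}}+\eta$ with $\eta<\omega^{\alpha_n}$. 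Translating by $-\varepsilon$ does not change the order type.

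Next I bound a single term. Every $\delta\in[\gamma,0]$ satisfies $v(a^{|\delta})\le v(a)$, since truncating and translating does not increase the order type of the support. Then multiplicativity $v(xy)=v(x)\oplus v(y)$ gives
\[
v(a^{|\delta}b^{|\varepsilon})\le v(a)\oplus v^r(b)\oplus\eta .
\]
Both $v(a)$ and $\eta$ are $<\omega^{\alpha_n}$, and $\omega^{\alpha_n}$ is additively principal. Hence $v(a)\oplus\eta<\omega^{\alpha_n}$, and the whole bound is $<v^r(b)\oplus\omega^{\alpha_n}=v(b)$. Choosing $\gamma$ close enough to $0$ guarantees that every $\varepsilon\in[\gamma,0)$ is in the range where the truncation estimate holds.

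The main obstacle is that the sum over $\delta+\varepsilon=\gamma$ is typically infinite, and $J_{v(b)}$ being an ideal does not by itself control infinite sums. My plan is to avoid summing term by term and instead bound the order type of the support of the whole remainder $\sum_{\varepsilon\ne 0}a^{|\delta}b^{|\varepsilon}$. This support is contained in $(Supp(a)-\gamma')+(Supp(b_{|\gamma''})-\gamma'')$ for suitable shifts, that is, in a sumset of a translate of $Supp(a)$ with a truncated piece of $Supp(b)$. Lemma \ref{admul} then bounds its order type by $ot(a)\odot ot(b_{|\gamma''})$. The delicate point is passing from this order type bound to a bound on the semi-valuation $v$, which is computed modulo $J+K$. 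I expect this to require working with minimal-order-type representatives of $a$ and $b$ modulo $J+K$, and checking that replacing $a$ or $b$ by another representative changes the sum only by an element of $J$. If the order type route fails, I would fall back on a transfinite induction on $v(a)$, using the partial Leibniz rule of Lemma \ref{strprin} when $v^p(a)>1$ and handling the case $v(a)=0$, where $a\in K+J$, directly.
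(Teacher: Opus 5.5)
\textbf{Main gap.} Your reduction through Proposition \ref{conv} and your per-term bound are the right idea, but the proposal never finishes. It stops at what you call the main obstacle and offers two untested alternatives.

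That obstacle is not real. Modulo $J$ the sum in Proposition \ref{conv} has only finitely many nonzero terms. Indeed, $a^{|\delta}\notin J$ forces $\delta=\sup(Supp(a)\cap(-\infty,\delta])$, so $\delta$ lies in the left closure of $Supp(a)$, which is again well ordered; the same holds for $\varepsilon$ and $b$. Two well-ordered subsets of $\mathbb{R}$ admit only finitely many pairs with $\delta+\varepsilon=\gamma$. Since $v(x+y)\le\max(v(x),v(y))$ (take minimal representatives modulo $J+K$ and apply Lemma \ref{admul}), $J_{v(b)}$ is closed under finite sums. So your bound on each term with $\varepsilon\neq 0$ already completes the proof.

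\textbf{Your order-type route.} It can also be completed, but for a reason you do not state. $Supp(a)\cap(\gamma,0]$ has a least element $x_0>\gamma$, so every $y\in Supp(b)$ contributing to $(ab)_{|\gamma}-a_{|\gamma}b$ satisfies $y\le\gamma-x_0<0$. For minimal representatives this gives $ot((ab)^{|\gamma}-a^{|\gamma}b)\le\omega^{v(a)}\odot ot(b_{|\gamma-x_0})<\omega^{v(b)}$. The delicate point is not the passage from order type to $v$, which is immediate since $v(x)\le deg(ot(x))$. Also, changing representatives is not harmless modulo $J$ alone: replacing $a$ by $a+k$ with $k\in K$ shifts the difference by $k\,b^{|\gamma}$, which lies in $J_{v(b)}$ but not in $J$.

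\textbf{Truncation estimates.} Your justifications conflate $v$ with order type. A minimal representative $c$ of $b$ modulo $J+K$ has $ot(c)=\omega^{v(b)}$, not $v(b)$. So the part of $Supp(c)$ below $\varepsilon$ does not have order type $\omega^{\alpha_1}+\ldots+\omega^{\alpha_{n-1}}+\eta$. What is true is that it is a proper initial segment, of order type $<\omega^{v(b)}$. Hence $v(b^{|\varepsilon})<v(b)$ for $\varepsilon<0$ close to $0$, where closeness ensures $b^{|\varepsilon}\equiv c^{|\varepsilon}$ mod $J$. That is all you need: if $\xi<v(b)$ and $v(a)<\omega^{\alpha_n}$, comparing Cantor normal forms gives $v(a)\oplus\xi<v(b)$.

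Likewise, $v(a^{|\delta})\le v(a)$ does not follow from truncation not raising $ot(a)$. The order type $ot(a)$ can be far larger than $\omega^{v(a)}$, and $a^{|\delta}$ can have large value for $\delta$ far from $0$. The inequality holds for $\delta<0$ close to $0$ because there $a^{|\delta}$ agrees modulo $J$ with the corresponding truncation of a minimal representative of $a$; the case $\delta=0$ is trivial.

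With these repairs either route gives a complete proof. The paper itself only cites this lemma, so there is no internal proof to compare against.
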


As we are working on the $RV$, our object in the inductive step would be of the form $pq+r $ where $v(r)<v(pq)$. The following provide us with ``enough'' $\gamma$ for which $r^{|\gamma} \in J_{v^r(pq)}$. We note that in the case where $v(pq)$ is a successor each $\gamma$ close enough to $0$ for which $v(p^{|\gamma}q)=v^r(pq) $ would work, but when it is not the case we may have $v(r^{|\gamma})\ge v^r(pq) $. 
\begin{definition} \cite{FLLM}
For every element $b \in K((\mathbb{R}^{\le 0}))$ we define
 \[Res(b) = \{ \gamma < 0 | v(b^{|\gamma}) = v^r(b)\}, 
 Big^{\alpha}(b) = \{ \gamma < 0 | v_J(b^{|\gamma}) \ge {\alpha}\} .\] 
\end{definition}
\begin{lemma} \label{Resot} \cite[Lemma 6.8]{Ber}
For every principal element $b$ we have \\$ot(Res(b))=\omega^{v^p(b)}$ and $sup(Res(b))=0 $.
\end{lemma}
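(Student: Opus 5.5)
We identify the support of $b$ with the ordinal $\omega^{v(b)}$ and translate membership in $Res(b)$ into a condition on the Cantor normal form of the position of $\gamma$; $Res(b)$ then becomes a set of ordinals whose order type we can count.

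\textbf{Step 1: what $v$ sees.} We first show that for a series $c$ with $\sup Supp(c)=0$, reducing modulo $J+K$ only affects $c$ away from $0$ and at the exponent $0$. Every element of $J$ has support bounded away from $0$, and $K$ only changes the coefficient at $0$. Hence $v(c)=\deg(\tau)$, where $\tau$ is the minimum, over $\delta<0$, of $ot(Supp(c)\cap(\delta,0))$. If instead $\sup Supp(c)<0$, then $c\in J$ and $v(c)=-\infty$. We expect this reduction to be the main technical point: one must check that no representative modulo $J+K$ has a smaller tail order type. This follows because adding an element of $J$ leaves $Supp(c)\cap(\delta,0)$ unchanged for $\delta$ close enough to $0$.

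\textbf{Step 2: translating $Res(b)$.} Let $v(b)=\omega^{\alpha_1}+\ldots+\omega^{\alpha_n}$ in Cantor normal form, so $ot(b)=\omega^{v(b)}$ and $\sup Supp(b)=0$. Write $\rho=v^r(b)=\omega^{\alpha_1}+\ldots+\omega^{\alpha_{n-1}}$, so that $v(b)=\rho+\omega^{\alpha_n}$. Fix $\gamma<0$ and let $\xi=ot(Supp(b)\cap(-\infty,\gamma))<\omega^{v(b)}$. The support of $b^{|\gamma}$ is $(Supp(b)\cap(-\infty,\gamma])-\gamma$. If $\gamma$ is neither in $Supp(b)$ nor a supremum of $Supp(b)\cap(-\infty,\gamma)$, this support has supremum $<0$, so $v(b^{|\gamma})=-\infty$ and $\gamma\notin Res(b)$. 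Otherwise, by Step 1 and after discarding the point $0$ (absorbed by $K$), $v(b^{|\gamma})$ is the degree of the minimal order type of a final segment of $\xi$. If $\xi=\omega^{\delta_1}+\ldots+\omega^{\delta_k}$ in Cantor normal form, this is $\delta_k$ (with $v=0$ when $\xi$ is a successor or $0$, matching the convention). Hence $\gamma\in Res(b)$ if and only if $\xi$ has last Cantor exponent exactly $\rho$, that is, $\xi=\omega^\rho\cdot\mu$ with $\mu$ a successor ordinal.

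\textbf{Step 3: counting.} Distinct such $\gamma$ give distinct $\xi$, and $\gamma\mapsto\xi$ is order preserving. Conversely, every admissible $\xi<\omega^{v(b)}$ is realized by some $\gamma$, namely the $\xi$-th element of $Supp(b)$, which lies below $0$. Since $\omega^{v(b)}=\omega^{\rho}\cdot\omega^{\omega^{\alpha_n}}$, the condition $\xi<\omega^{v(b)}$ amounts to $\mu<\omega^{\omega^{\alpha_n}}$. The successor ordinals below the limit $\omega^{\omega^{\alpha_n}}$ are cofinal in it and have order type $\omega^{\omega^{\alpha_n}}=\omega^{v^p(b)}$. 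Therefore $ot(Res(b))=\omega^{v^p(b)}$.

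Moreover, the corresponding $\xi=\omega^\rho\cdot\mu$ are cofinal in $\omega^{v(b)}$. So the corresponding $\gamma$ are cofinal in $Supp(b)$, whose supremum is $0$, which gives $\sup Res(b)=0$. The case $n=1$ is included: there $\rho=0$ and $Res(b)$ corresponds to successor positions.
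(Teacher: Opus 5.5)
Your strategy of reading $Supp(b)$ as the ordinal $\omega^{v(b)}$ and counting positions is sound, and Step 1 is correct. Step 2, however, contains a false claim that carries over into Step 3. Write $s_\eta$ for the $\eta$-th element of $Supp(b)$. You treat the cases ``$\gamma\in Supp(b)$'' and ``$\gamma=\sup(Supp(b)\cap(-\infty,\gamma))$'' together, and assert that in both cases $v(b^{|\gamma})$ is the last Cantor exponent of $\xi$. This fails when $\gamma=s_\xi$ with $\xi$ a limit and $s_\xi>\sup_{\eta<\xi}s_\eta$, i.e.\ when the support has a gap just below $\gamma$. Such gaps are perfectly possible for well-ordered subsets of $\mathbb{R}$. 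In that case $Supp(b^{|\gamma})\cap(\delta,0)=\emptyset$ for $\delta$ close to $0$, so $b^{|\gamma}\in(J+K)\setminus J$ and $v(b^{|\gamma})=0$, whatever $\xi$ is.

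For example, take all coefficients $1$ and $Supp(b)=\{-\frac1k-\frac{1}{4k^2m}: k,m\ge 1\}$. Then $v(b)=2$ and $v^r(b)=v^p(b)=1$. The point $s_\omega=-\frac{9}{16}$ has $\xi=\omega$, yet $v(b^{|s_\omega})=0$; in fact $Res(b)=\{-\frac1k: k\ge1\}$. So your characterization of $Res(b)$ is wrong:
\begin{itemize}
\item For $\rho\ge1$ it admits spurious points $s_\xi$.
\item On the set you describe, $\gamma\mapsto\xi$ is not injective: both $\sup_{\eta<\xi}s_\eta$ and $s_\xi$ are sent to $\xi$.
\item Your realizer ``the $\xi$-th element of $Supp(b)$'' need not lie in $Res(b)$.
\item For $\rho=0$ it errs the other way. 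There $Res(b)$ is exactly the set of left-isolated points of $Supp(b)$. This set contains $s_0$ and every $s_\xi$ at a limit position with a gap below it, not only the successor positions.
\end{itemize}

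The repair is short, and your counting then goes through.

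For $\rho\ge1$, left-isolated points are excluded, since they give $v(b^{|\gamma})=0<\rho$. So $Res(b)$ consists of left limits of $Supp(b)$, and a left limit is determined by its $\xi$: it equals $g_\xi:=\sup_{\eta<\xi}s_\eta\le s_\xi<0$. Hence $Res(b)=\{g_\xi : \xi=\omega^\rho\mu<\omega^{v(b)},\ \mu \mbox{ a successor}\}$. The map $\xi\mapsto g_\xi$ is strictly increasing on limit ordinals, because $g_\xi\le s_\xi<s_{\xi+1}\le g_{\xi'}$ for limits $\xi<\xi'$. Use $g_\xi$ in place of $s_\xi$ in Step 3, both for the order type and for the supremum.

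For $\rho=0$, $Res(b)$ lies between two sets. Below it is $\{s_{\eta+1}:\eta+1<\omega^{v(b)}\}$, which has order type $\omega^{v(b)}$ and is cofinal in $Supp(b)$. Above it is $Supp(b)$ itself, also of order type $\omega^{v(b)}$. Therefore $ot(Res(b))=\omega^{v(b)}=\omega^{v^p(b)}$ and $sup(Res(b))=0$.

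The paper itself gives no proof of this statement; it simply imports the lemma from Berarducci.
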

\begin{lemma} \label{bpaf} \cite[Lemma 4.8]{FLLM} 
For every principal $b$ and ordinal $\beta$,
  \[ \omega^{\beta}ot(\{\gamma : v(b^{|\gamma}) \geq \beta\}) \leq \omega ^{v(b)}. \]
\end{lemma}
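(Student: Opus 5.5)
The statement to be proved is Lemma \ref{bpaf}: for principal $b$ and any ordinal $\beta$, with $S_\beta=\{\gamma : v(b^{|\gamma}) \ge \beta\}$, we have $\omega^{\beta}\, ot(S_\beta) \le \omega^{v(b)}$.

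The plan is to bound the order type of the support of $b$ from below. Since $b$ is principal, $ot(Supp(b)) = \omega^{v(b)}$. The idea is that $Supp(b)$ contains many disjoint intervals, each of order type at least $\omega^{\beta}$. Concatenating them in order then yields a subset of $Supp(b)$ of order type at least $\omega^\beta\, ot(S_\beta)$.

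First I note that $S_\beta \subseteq Supp(b)\cup\{\text{limit points}\}$ is well ordered. Indeed, $b^{|\gamma}$ only depends on $Supp(b)\cap(-\infty,\gamma]$. If $\gamma \notin Supp(b)$ and no support points accumulate to $\gamma$ from below, then $b^{|\gamma}$ has finite support near $0$, which forces $v \le 0$. So for $\beta>0$, each $\gamma\in S_\beta$ is a limit of support points from below. (The case $\beta=0$ is trivial: $ot(S_0)\le ot$ of an interval of reals of well-ordered type, bounded by $ot(Supp(b))$ via the map $\gamma\mapsto$ a nearby support point; I would handle it separately.)

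Next, for $\gamma\in S_\beta$ with $\beta>0$, the fact that $v(b^{|\gamma})\ge\beta$ means that no representative modulo $J+K$ has order type below $\omega^\beta$. In particular, for every $\delta<\gamma$ the set $Supp(b)\cap(\delta,\gamma]$ has order type at least $\omega^\beta$. The reason is that $b^{|\gamma}$ differs modulo $J$ from the translate of $b$ restricted to $(\delta,\gamma]$, since the part at or below $\delta$ becomes divisible by $t^{\delta-\gamma}\in J$.

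Now let $\gamma_\xi$, for $\xi<\theta=ot(S_\beta)$, enumerate $S_\beta$ increasingly. The intervals $I_\xi=(\sup_{\eta<\xi}\gamma_\eta,\gamma_\xi]$ are disjoint. Where $\gamma_\xi$ is the successor element of $S_\beta$, the previous step gives $ot(Supp(b)\cap I_\xi)\ge\omega^\beta$. At limit positions $\xi$, the interval may be degenerate. However, $\omega^\beta\theta$ equals the sum of $\omega^\beta$ over the successor positions plus the contributions accounted for by limits. Since $\omega^\beta\cdot\lambda=\sup_{\xi<\lambda}\omega^\beta\xi$ for limit $\lambda$, it suffices to count successor positions and $\xi=0$. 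Hence $Supp(b)$ contains a subset of order type at least $\sum_{\xi<\theta}\omega^\beta=\omega^\beta\theta$, using ordinal (non-commutative) sum in increasing order. Monotonicity of order type under subsets then gives $\omega^\beta\theta\le\omega^{v(b)}$.

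The main obstacle is the second step. It requires carefully justifying that $v(b^{|\gamma})\ge\beta$ forces every left-neighborhood piece of the support below $\gamma$ to have order type at least $\omega^\beta$. Concretely, truncating away everything at or below $\delta$ changes $b^{|\gamma}$ only by an element of $J$. Also, modifying by $K$ or by finitely many terms does not reduce the degree below $\beta$. This needs the definition of $v$ via minimal order type modulo $J+K$, and the fact that $deg(\alpha)\ge\beta$ iff $\alpha\ge\omega^\beta$.
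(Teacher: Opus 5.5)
The overall strategy of packing blocks of type $\omega^\beta$ of $Supp(b)$ between consecutive elements of $S_\beta$ is the natural one. Your Steps 1 and 2 are correct. (The paper gives no proof of this lemma; it only cites it from \cite{FLLM}.)

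\textbf{The gap is in the ordinal bookkeeping of Step 3.} You get a block of type $\ge\omega^\beta$ only at $\xi=0$ and at successor positions, and you then claim that summing $\omega^\beta$ over these positions gives $\omega^\beta\theta$. That is false whenever $\theta$ is an infinite successor ordinal. If $\theta=\lambda+n$ with $\lambda$ a limit and $n\ge 1$, the non-limit ordinals below $\theta$ have order type $\lambda+(n-1)$, so you only get $\omega^\beta(\lambda+n-1)$, one copy of $\omega^\beta$ short. For example, $\theta=\omega+1$ yields $\omega^{\beta+1}$ instead of $\omega^{\beta+1}+\omega^\beta$. The identity $\omega^\beta\lambda=\sup_{\xi<\lambda}\omega^\beta\xi$ does not rescue this. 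Position $\lambda$ must still contribute its own $\omega^\beta$, and when $\gamma_\lambda=\sup_{\eta<\lambda}\gamma_\eta$ the interval $I_\lambda$ is empty.

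\textbf{Why this is a real gap.} The lemma holds only because $\gamma$ ranges over $\gamma<0$ (as in the definitions of $Res$ and $Big$) while $\sup Supp(b)=0$, and your argument never uses this. Suppose $\gamma=0$ were allowed, and take $b$ principal with $ot(b)=\omega^2$ and $\beta=1$. Then $S_1$ consists of the $\omega$ many left accumulation points of $Supp(b)$ below $0$, together with $0$ itself. So $\omega\cdot ot(S_1)=\omega^2+\omega>\omega^2$, and the statement is false. Yet your Steps 1 and 2 go through verbatim, and only the counting step breaks.

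\textbf{How to repair it.} Three observations suffice.
\begin{enumerate}
\item If $S_\beta\neq\emptyset$ then $\beta<v(b)$. Indeed, for $\gamma<0$ the truncation $b^{|\gamma}$ sees only a proper initial segment of $Supp(b)$, whose type is $<\omega^{v(b)}$.
\item If $S_\beta$ has a largest element $\gamma^*$ (the only situation in which your count can be short), all your blocks lie in $(-\infty,\gamma^*]$. You may then append the block $Supp(b)\cap(\gamma^*,0)$. It is nonempty because $\gamma^*<0=\sup Supp(b)$, and as a tail of $Supp(b)$ it has type $\omega^{v(b)}\ge\omega^\beta$. This supplies exactly the missing $\omega^\beta$.
\item If $S_\beta$ has no largest element, then $\theta$ is $0$ or a limit. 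In that case $\xi\mapsto\xi+1$ embeds $\theta$ into its non-limit elements, so your count is exact.
\end{enumerate}

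Separately, the case $\beta=0$ needs no special treatment, and your map ``$\gamma\mapsto$ a nearby support point'' is not obviously injective anyway. Step 2 with $\omega^0=1$ simply says that every nondegenerate $I_\xi$ meets $Supp(b)$, so the same argument covers it.
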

\begin{corollary} \label{litBig} 
For every principal $b,r $ such that $v(r)<v(b) $ we have that $ot(Big^{v^r(b)}(r))<{v^p(b)} $.
\end{corollary}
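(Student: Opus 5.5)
The plan is to apply Lemma~\ref{bpaf} to $r$ with $\beta=v^r(b)$, and then cancel the common left factor $\omega^{v^r(b)}$ using ordinal arithmetic. Write $v(b)=\omega^{\alpha_1}+\ldots+\omega^{\alpha_n}$ in Cantor normal form, so that $v^r(b)=\sum_{i<n}\omega^{\alpha_i}$ and $v^p(b)=\omega^{\alpha_n}$, and put $X=ot(Big^{v^r(b)}(r))$. Since $Big^{v^r(b)}(r)=\{\gamma<0 : v(r^{|\gamma})\ge v^r(b)\}$ and $r$ is principal, Lemma~\ref{bpaf} gives
\[ \omega^{v^r(b)}\cdot X \le \omega^{v(r)}. \]

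First case: $v(r)<v^r(b)$. Then $\omega^{v^r(b)}X\le\omega^{v(r)}<\omega^{v^r(b)}$, which forces $X=0<v^p(b)$, and we are done.

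Second case: $v^r(b)\le v(r)<v(b)=v^r(b)+\omega^{\alpha_n}$. Here $v(r)=v^r(b)+\delta$ with $\delta<\omega^{\alpha_n}=v^p(b)$. Since $\omega^{v^r(b)+\delta}=\omega^{v^r(b)}\omega^{\delta}$ and left multiplication by a nonzero ordinal is strictly monotone, we get $\omega^{v^r(b)}X\le\omega^{v^r(b)}\omega^{\delta}$, and hence $X\le\omega^{\delta}$. This step needs no further justification.

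The remaining step, which I expect to be the main obstacle, is to pass from $X\le\omega^\delta$ to $X<\omega^{\alpha_n}$. If $\delta<\alpha_n$ this is immediate. In general, however, $\delta<\omega^{\alpha_n}$ only yields $\omega^\delta<\omega^{\omega^{\alpha_n}}$, so Lemma~\ref{bpaf} used as a black box is too coarse. My first attempt would be to sharpen the count. Split $Big^{v^r(b)}(r)$ according to the exact value $v(r^{|\gamma})=v^r(b)+\eta$ with $\eta\le\delta$. For each level I would use Proposition~\ref{conv} and Lemma~\ref{Resot}, applied to $r$ and its translated truncations, to argue that the set of $\gamma$ at which the value jumps to a level $\ge v^r(b)$ behaves like a residual set of an element of principal value at most $\omega^{\eta'}$ with $\eta'<\alpha_n$. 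I would then sum these bounds by the Hessenberg estimate of Lemma~\ref{admul}. Because $\omega^{\alpha_n}$ is additively principal, a union of finitely many sets each of order type $<\omega^{\alpha_n}$ still has order type $<\omega^{\alpha_n}$, which would give the stated bound $ot(Big^{v^r(b)}(r))<v^p(b)$.

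The crux to verify is therefore the per-level claim that the $\gamma$'s with $v(r^{|\gamma})\ge v^r(b)$ have order type below $\omega^{\alpha_n}$. I would first test this claim on the successor case $v(b)=v^r(b)+1$, where the bound is finite, and only then attempt the general case.
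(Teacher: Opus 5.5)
Your first steps are correct: Lemma~\ref{bpaf} gives $\omega^{v^r(b)}X\le\omega^{v(r)}$. Hence $X=0$ when $v(r)<v^r(b)$, and otherwise $X\le\omega^{\delta}$ with $\delta<v^p(b)$. The gap is the step you flag as the main obstacle. The bound $X<v^p(b)=\omega^{\alpha_n}$, taken literally, is false, so no per-level refinement can prove it.

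Here is a counterexample. Take $b\in P_{\omega\cdot 2}$, so that $v^r(b)=v^p(b)=\omega$. Choose $\gamma_1<\gamma_2<\cdots$ converging to $0$ and put $\gamma_0=\gamma_1-1$. Pick sets $B_i\subset(\gamma_{i-1},\gamma_i)$ of order type $\omega^{\omega}$ with $\sup B_i=\gamma_i$, and let $r$ be any series with support $\bigcup_{i\ge 1}B_i$.
\begin{itemize}
\item $ot(r)=\omega^{\omega}\cdot\omega=\omega^{\omega+1}$, so $r\in P_{\omega+1}$ and $v(r)=\omega+1<v(b)$.
\item Modulo $J+K$ only a final segment of the support matters. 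Near $0$ the support of $r^{|\gamma_i}$ is a translate of a final segment of $B_i$, which has order type $\omega^{\omega}$.
\item Hence $v(r^{|\gamma_i})=\omega=v^r(b)$ for every $i$, and so $ot(Big^{v^r(b)}(r))\ge\omega=v^p(b)$.
\end{itemize}
Your planned test case would not reveal this. When $v^p(b)=1$ the set $Big^{v^r(b)}(r)$ really is empty, because $v(r^{|\gamma})<v(r)\le v^r(b)$ for principal $r$ and $\gamma<0$.

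The bound the paper actually uses is $ot(Big^{v^r(b)}(r))<\omega^{v^p(b)}$. It is quoted in exactly this form in the proof of Theorem~\ref{strictdec}. It is also the meaningful comparison, since $ot(Res(b))=\omega^{v^p(b)}$ by Lemma~\ref{Resot}. That is what lets one choose points of $Res(c_1b_1)$ outside $Big^{v^r(c_1b_1)}(r)$ arbitrarily close to $0$ in Proposition~\ref{2prin}. So the displayed statement is missing an exponent $\omega$.

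Your own estimate already proves the correct version: since $\delta<v^p(b)$, you get $X\le\omega^{\delta}<\omega^{v^p(b)}$. More directly, with no case distinction,
$\omega^{v^r(b)}X\le\omega^{v(r)}<\omega^{v(b)}=\omega^{v^r(b)}\omega^{v^p(b)}$,
and left cancellation yields $X<\omega^{v^p(b)}$. The paper gives no separate proof, and this one-line consequence of Lemma~\ref{bpaf} is clearly what the corollary intends. You should stop there rather than look for a sharper count.
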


\section{Unique Factorization Results}

We recall from \cite{LM2017} that an element $b$ is called \emph{germ-like} if there exists some $\alpha$ and $k \in K$ such that $b+k \in P_{\alpha} $.
\begin{lemma} \label{otrv}
Suppose $b,c$ are germ-like such that $rv(b)=rv(c)$. Then \\ $ot(b-c)<ot(b) $
\end{lemma}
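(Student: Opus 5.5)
We first reduce to the principal case. Since $b$ and $c$ are germ-like, there are $k,k'\in K$ and ordinals $\alpha,\alpha'$ with $b+k\in P_{\alpha}$ and $c+k'\in P_{\alpha'}$. Adding a constant changes the support only at the exponent $0$, so it changes the order type by at most one point. We use $ot(b)$ to mean the order type of the support of $b$.

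We first claim $\alpha=\alpha'$. The semi-valuation $v$ is computed modulo $J+K$, so $v(b)=v(b+k)=\alpha$ and $v(c)=\alpha'$. From $rv(b)=rv(c)$ we get $v(b-c)<v(b)$. If $v(c)\neq v(b)$, the ultrametric-type inequality for $v$ (which holds because $ot$ of a union is bounded by the Hessenberg sum, Lemma \ref{admul}) would give $v(b-c)=\max(v(b),v(c))\ge v(b)$, a contradiction. Hence $\alpha=\alpha'$, and we set $b'=b+k$ and $c'=c+k'$, both in $P_\alpha$.

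By the remark after the definition of $RV$, $rv(b')=rv(c')$ means that $b'_\gamma=c'_\gamma$ for almost every $\gamma\in Supp(b')$. In other words, the set of $\gamma\in Supp(b')$ where the two series differ has order type $<\omega^\alpha$.

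Next we bound the support of $b-c$. This difference equals $b'-c'$ plus a constant, so up to the point $0$, $Supp(b-c)\subseteq E_1\cup E_2$. Here $E_1\subseteq Supp(b')$ is the set of exponents where $b'$ and $c'$ disagree, and $E_2=Supp(c')\setminus Supp(b')$. We already have $ot(E_1)<\omega^\alpha$, and by symmetry (using $Supp(c')$ in place of $Supp(b')$) also $ot(E_2)<\omega^\alpha$.

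To justify the symmetric statement, we note that the relation $rv(b)=rv(c)$ is symmetric, since $v(c)=v(b)$. Alternatively, we can argue directly: if $ot(E_2)\ge\omega^\alpha$, then $E_2$ is cofinal with order type $\omega^\alpha$, because $E_2\subseteq Supp(c')$, which has order type $\omega^\alpha$, and a subset of $\omega^\alpha$ of full order type is cofinal. Then $b'-c'$ restricted to $E_2$ would already force $ot(b'-c')\ge\omega^\alpha$ after any modification modulo $J+K$. This contradicts $v(b'-c')<\alpha$, because a series whose support contains a subset of order type $\omega^\alpha$ cofinal in $0$ cannot be congruent mod $J+K$ to a series of order type below $\omega^\alpha$. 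The reason is that modification by $J$ affects only a bounded-away-from-$0$ region: elements of $J$ have support bounded above by some $x<0$.

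Finally we use additive principality. Since $\omega^\alpha$ is additively principal, Lemma \ref{admul} gives $ot(E_1\cup E_2)\le ot(E_1)\oplus ot(E_2)<\omega^\alpha$, because the Hessenberg sum of two ordinals below $\omega^\alpha$ stays below $\omega^\alpha$. Adding the possible point $0$ keeps the order type below $\omega^\alpha$ when $\alpha>0$. Therefore $ot(b-c)<\omega^\alpha\le ot(b)$.

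The degenerate case $\alpha=0$, where the series are essentially constants, needs separate treatment. There $v(b)=0$ (or $-\infty$), so $rv(b)=rv(c)$ forces $b-c\in J$ up to the relevant ambiguity, and germ-likeness gives $ot(b)\ge 1$. We handle this directly: the germ-like series in this case have finite support with sup $0$, and $b-c$ has strictly smaller support.

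The main obstacle is the step on $E_2$: translating the valuation condition $v(b'-c')<\alpha$, which is defined modulo $J+K$, into a statement about the actual support near $0$. The key fact to establish there is that congruence modulo $J$ only alters the series on an initial segment bounded away from $0$, and an initial segment of a set of order type $\omega^\alpha$ cofinal in $0$ has smaller order type.
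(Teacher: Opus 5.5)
Your proof is correct, but it is organized differently from the paper's. The paper splits on whether $b-c$ is germ-like.
\begin{itemize}
\item If it is, the paper reads off $ot(b-c)=\omega^{v(b-c)}(+1)<ot(b)$ directly from $v(b-c)<v(b)$.
\item If it is not, the paper asserts $\sup(Supp(b-c)\setminus\{0\})<0$. Then $S\cap Supp(b)$ and $S\cap Supp(c)$ are bounded initial pieces of sets of order type $\omega^{v(b)}$, and Lemma \ref{admul} with additive principality finishes.
\end{itemize}

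You skip the case split. After normalizing to $b',c'\in P_\alpha$, you check $v(b)=v(c)$, which the paper uses tacitly. You then cover $Supp(b-c)\setminus\{0\}$ by the disagreement sets $E_1\subseteq Supp(b')$ and $E_2\subseteq Supp(c')$. Each is bounded below $\omega^\alpha$ directly from $v(b'-c')<\alpha$: a subset of full order type would be cofinal at $0$, and no change modulo $J+K$ can remove a cofinal tail. The Hessenberg-sum step is then the same as in the paper.

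What your route buys is completeness. The paper's dichotomy misses the case where $b-c$ is not germ-like but its support still accumulates at $0$. For instance, take $\alpha=2$ and $c'=b'+d$, with $Supp(d)$ of order type $\omega\cdot 2$, cofinal in $0$ and disjoint from $Supp(b')$. This gives $rv(b')=rv(c')$ with $\sup(Supp(b'-c'))=0$, so the paper's ``$\sup(S)<0$'' fails. Your argument covers this case uniformly.

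Two minor points. Your handling of $\alpha=0$ is loose: germ-like series there are just constants, so $rv(b)=rv(c)$ simply forces $b=c$. Also, as in the paper, the statement needs $b\neq 0$ to be excluded tacitly.
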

\begin{proof}
If $b-c$ is germ-like then $ot(b-c)=\omega^{v(b-c)}(+1) $ and by definition is strictly smaller than $\omega^{v(b)}(+1)=ot(b) $. Otherwise, let $S = Supp(b-c)-\{0\}$. As $sup(S)<0$ then $ot(S\cap Supp(b)), ot(S\cap Supp(c)) <\omega^{v(b)}$. Then by Lemma \ref{admul} $ot(S) \le ot(S\cap Supp(b)) \oplus ot(S\cap Supp(c)) $. As $\omega^{v(b)} $ is additively principal then $ot(S)<ot(b) $.
\end{proof}
In the following we show how the $rv$ relation is being preserved under residue in the case where the valuation is a successor. 
\begin{lemma} \label{ResRV}
Let $b,c$ germ-like such that $v(b)=v(c) $, $Res(c)= Res(b)$ and $v^p(b)=1$. Then we have $rv(b)=rv(c)$ if and only if  \[ot(\big\{\gamma \in Res(b)  : rv(b^{|\gamma}) \neq rv(c^{|\gamma}) \big\}< \omega .\]
\end{lemma}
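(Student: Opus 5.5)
The plan is to prove both directions through the difference $d=b-c$, using linearity of translated truncations, $(b-c)^{|\gamma}=b^{|\gamma}-c^{|\gamma}$. Since $v(b)=v(c)$ and $v^p(b)=1$, write $v(b)=\alpha+1$ with $\alpha=v^r(b)$. By Lemma \ref{Resot}, $Res(b)=Res(c)$ has order type $\omega^{v^p(b)}=\omega$ and supremum $0$. So for $\gamma\in Res(b)$ we have $v(b^{|\gamma})=v(c^{|\gamma})=\alpha$. Hence $rv(b^{|\gamma})\neq rv(c^{|\gamma})$ holds exactly when $v(d^{|\gamma})\ge\alpha$, that is, when $\gamma\in Big^{\alpha}(d)\cap Res(b)$. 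The statement therefore reduces to:
\[rv(b)=rv(c) \iff Big^{\alpha}(d)\cap Res(b) \mbox{ is finite.}\]

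($\Rightarrow$) Suppose $rv(b)=rv(c)$, so $v(d)<v(b)=\alpha+1$, i.e.\ $v(d)\le\alpha$. If $d$ is not germ-like, I handle it as in Lemma \ref{otrv}: removing the constant term does not change truncations at $\gamma<0$ modulo $J$, so I may assume $d$ is principal or lies in $J+K$. Apply Lemma \ref{bpaf} to $d$ with $\beta=\alpha$. This gives $\omega^{\alpha}\, ot(\{\gamma : v(d^{|\gamma})\ge\alpha\})\le\omega^{v(d)}\le\omega^{\alpha}$, so the set $Big^{\alpha}(d)$ has order type at most $1$. In particular it is finite, and so is its intersection with $Res(b)$. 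This is essentially Corollary \ref{litBig}, applied with $v^p(b)=1$, which gives order type $<1$ or $\le 1$ depending on normalization; either way the set is finite.

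($\Leftarrow$) Suppose $rv(b)\neq rv(c)$, so $v(d)=\alpha+1$ (it cannot exceed it). I want to show that $v(d^{|\gamma})\ge\alpha$ for cofinally many $\gamma\in Res(b)$. Since $d$ is, up to a constant, principal with $v^p(d)=1$ and $v^r(d)=\alpha$, Lemma \ref{Resot} gives that $Res(d)$ has order type $\omega$ with supremum $0$. The main obstacle is that $Res(d)$ need not coincide with $Res(b)$, so cofinality in one does not immediately give an infinite intersection with the other.

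To handle this, I would use that $b$ is a principal series of order type $\omega^{\alpha+1}=\omega^{\alpha}\cdot\omega$. Its support is cut by an $\omega$-sequence $\gamma_1<\gamma_2<\dots\to 0$ such that each block of support between consecutive cut points has order type $\omega^{\alpha}$. For $\gamma$ between such points, $v(b^{|\gamma})=\alpha$ exactly at those $\gamma$ where the truncated support has type $\omega^{\alpha}\cdot n$ up to smaller terms; this identifies $Res(b)$ with the tail of these cut points. The same description holds for $c$ and $d$. Now $Supp(d)\subseteq Supp(b)\cup Supp(c)$, and $Res(b)=Res(c)$. So every $\gamma$ outside $Res(b)$ but near $0$ lies strictly inside blocks of both $b$ and $c$. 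By the description above, $d^{|\gamma}$ at such a $\gamma$ then differs from $d^{|\gamma'}$, for the nearest $\gamma'\in Res(b)$ below it, only by a term of value $<\alpha$.

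It follows that if $v(d^{|\gamma})\ge\alpha$ for cofinally many $\gamma\in Res(d)$, then also $v(d^{|\gamma'})\ge\alpha$ for infinitely many $\gamma'\in Res(b)$. Then the set in the statement is infinite, which contradicts the hypothesis and completes the converse.
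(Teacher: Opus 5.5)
Your reduction to $d=b-c$ (the exceptional set is $Big^{\alpha}(d)\cap Res(b)$, with $\alpha=v^r(b)$) is fine. So is your forward direction, which matches the paper: $v(d)\le v^r(b)$ forces $v(d^{|\gamma})<v^r(b)$ for all $\gamma$ close to $0$. One small correction: $d$ is principal only modulo $J+K$, not modulo $K$. Dropping the $J$-part changes $d^{|\gamma}$ only for $\gamma$ bounded away from $0$, hence at finitely many points of $Res(b)$, so nothing is lost.

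The converse, however, rests on a step that is false as stated. You claim that for $\gamma\notin Res(b)$ near $0$, $d^{|\gamma}$ differs from $d^{|\gamma'}$ by a term of value $<\alpha$, where $\gamma'$ is the nearest point of $Res(b)$ below $\gamma$. Modulo $J$, the translated truncations at $\gamma$ and $\gamma'$ depend on unrelated parts of the support. Concretely, $d^{|\gamma}\equiv t^{-\gamma}(d_{|\gamma}-d_{|\gamma'}) \pmod J$. If $\gamma'\in Res(d)$ while $v(d^{|\gamma})<\alpha$, then $d^{|\gamma}-d^{|\gamma'}$ has value exactly $\alpha$. So your inference ``$v(d^{|\gamma})\ge\alpha\Rightarrow v(d^{|\gamma'})\ge\alpha$'' is unsupported.

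What your block picture actually gives is simpler, and it is the missing observation. For every $\gamma<0$ we have $v(b^{|\gamma}),v(c^{|\gamma})\le\alpha$, with equality exactly on $Res(b)=Res(c)$. So for $\gamma\notin Res(b)$ the ultrametric inequality gives $v(d^{|\gamma})<\alpha$. Hence $Res(d)\subseteq Res(b)$, and the ``main obstacle'' does not exist. If $v(d)=\alpha+1$, Lemma \ref{Resot} (applied to the principal tail of $d$) makes $Res(d)$ infinite and cofinal in $0$, and every $\gamma\in Res(d)$ lies in the exceptional set.

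This is exactly the paper's argument, run contrapositively. Finiteness gives $\gamma_0$ with $v(d^{|\gamma})<v^r(b)$ on $Res(b)_{>\gamma_0}$. The observation above extends this to all $\gamma>\gamma_0$, and therefore $v(d)<v(b)$.
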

\begin{proof}
If $v(b-c)<v(b)$ then $v(b-c) \le v^r(b) $. Hence, there exists some $\gamma_0<0$ such that for every $\gamma \ge \gamma_0$ we have $v((b-c)^{|\gamma})<v^r(b) $. Hence, \[\big\{\gamma \in Res(b)  : rv(b^{|\gamma}) \neq rv(c^{|\gamma}) \big\}\subset Res(b) \cap (-\infty , \gamma_0) < ot(Res(b))=\omega \]

For the other direction, let $\gamma_0 <0$ be such that for every $\gamma \in Res(b)_{>\gamma_0}$ we have $(b-c)^{|\gamma} \in J_{v^r(b)} $. Then for every $\gamma \in(b-c)_{>\gamma_0}$ we have \\$ v((b-c)^{|\gamma})<v^r(b)$. hence, $v^r(b-c)<v^r(b)$. As Hence, $v^r(b-c)\oplus 1 <v(b) $. As $v^p(b)=1 $ this implies that $v(b)>v(b-c)$ as required. $\qed$
\end{proof}
\begin{remark}
We note that we may always assume $Res(b)=Res(c)$ as \\$rv(b)=rv(c)$ implies that there exists some $\gamma_0 <0$ such that \\$Res(b)_{>\gamma_0}=Res(c)_{>\gamma_0} $.
\end{remark}
In the following we show how one may obtain, in some cases, unique factorization of a germ-like element from unique factorization of its image in the $RV$. 

\begin{proposition} \label{fromrv}
Let $p_1, p_2, q_1, q_2 $ germ-like. Suppose $p_1p_2 = q_1q_2 $ where $cf(q_1)\cap cf(q_2)=\emptyset$, $rv(p_1)= krv(q_1)$, $rv(p_2) = k^{-1}rv(q_2) $ for some $k \in K^{\times} $. Then $p_1=kq_1 $, $p_2 = k^{-1}q_2 $.
\end{proposition}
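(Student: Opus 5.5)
We will argue by contradiction, using the defect series $e := p_1 - kq_1$. We have $v(e) < v(q_1)$ because $rv(p_1) = k\,rv(q_1)$. Then
\[ q_1q_2 = p_1p_2 = (kq_1 + e)p_2, \qquad\text{so}\qquad q_1(q_2 - kp_2) = ep_2 .\]
Put $f := q_2 - kp_2$. The hypothesis $rv(p_2) = k^{-1}rv(q_2)$ gives $v(f) < v(q_2)$. The identity $q_1 f = e p_2$ is the key relation. Suppose $e \neq 0$; then $f \neq 0$, since $K((\mathbb{R}^{\le 0}))$ is a domain.

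Applying $v$ (which is additive for $\oplus$) and using $v(p_2) = v(q_2)$, we get
\[ v(q_1) \oplus v(f) = v(e) \oplus v(q_2). \]
Write the Cantor normal forms, viewed as multisets of exponents, and compare them. The multiset of exponents of the left side is $cf(q_1) \uplus cf(f)$, and that of the right side is $cf(e) \uplus cf(q_2)$. Since $cf(q_1) \cap cf(q_2) = \emptyset$, every exponent of $q_1$ (with multiplicity) must appear in $cf(e)$. Similarly, every exponent of $q_2$ must appear in $cf(f)$. Hence, as Hessenberg sums, $v(e) \ge v(q_1)$ in the natural (multiset-containment) sense. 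Since $\omega^{\beta}$-terms only increase ordinal size, this gives $v(e) \ge v(q_1)$ as ordinals, contradicting $v(e) < v(q_1)$.

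The main subtlety is the case where $e$, or $f$, lies in $J$ or in $J + K$. In that case $v$ takes the value $-\infty$ or $0$ and the Cantor normal form comparison degenerates. I would treat this separately.

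1. If $v(e) = -\infty$, then $e \in J$. Since $p_1$ and $q_1$ are germ-like, $e$ has real support with supremum $\le 0$, and $e = p_1 - kq_1$. Membership in $J$ means $\sup(\mathrm{Supp}(e)) < 0$, so $e$ is divisible by a monomial. Then $ep_2 = q_1 f$, and $q_1$ is not divisible by a monomial because its constant-adjusted support has supremum $0$. This forces $f \in J$, so both sides have valuation $-\infty$ and no contradiction arises from $v$ alone.

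2. In case 1, I would instead use order types. By Lemma \ref{otrv}, $ot(e) < ot(p_1)$ and $ot(f) < ot(q_2)$. I would compare supremum and leading behaviour by passing to $t^{-\sup}$-translates: set $e = t^{s}e'$ and $f = t^{s'}f'$ with $\sup$ of the supports of $e'$ and $f'$ equal to $0$. Comparing suprema of supports in $q_1f = ep_2$ gives $s = s'$, hence $q_1 f' = e' p_2$. Now $e'$ and $f'$ are germ-like or lie in $J + K$, and $v$ is genuinely informative.

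3. If some of the values are $0$ (i.e.\ $e' \in J + K$), I would treat the Cantor forms as empty multisets. The same multiset argument then forces $cf(q_1) \subseteq cf(e')$ with multiplicity, which is impossible when $v(e') = 0 < v(q_1)$. This assumes $v(q_1) > 0$, which holds since germ-like elements of the relevant kind are principal-type. If instead $v(q_1) = 0$, then $q_1 \in K$ modulo $J$, and the statement reduces to the trivial case.

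The obstacle I expect is exactly step 2. After translating, I must check that $ot(e') < ot(p_1)$ still yields $v(e') < v(q_1)$, rather than only a bound on order type. I would first try the following: the translation does not change order type, and for series with supremum $0$, $v$ is computed from $\deg$ of the order type after removing elements of $J + K$. So $v(e') \le \deg(ot(e')) \le v(q_1)$, with equality excluded because $ot(e') < \omega^{v(q_1)}(+1)$. With this in place the multiset contradiction goes through. Hence $e = 0$, and then $f = 0$, giving $p_1 = kq_1$ and $p_2 = k^{-1}q_2$.
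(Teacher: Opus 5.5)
Your argument is correct and is essentially the paper's proof: the $\oplus$-additivity of $v$ together with $cf(q_1)\cap cf(q_2)=\emptyset$ rules out a defect outside $J$, and the monomial part is then removed by translating by the supremum before repeating the Cantor-normal-form comparison, with Lemma \ref{otrv} bounding the translated defect (the paper translates by $\gamma=\max(\sup(r_1),\sup(r_2))$ and uses Proposition \ref{conv}); your exact identity $q_1f=ep_2$ is tidier bookkeeping, since it absorbs the quadratic term $r_1r_2$ and lets primality of $J$ match the suprema exactly. One small repair: when $ot(p_1)=\omega^{v(q_1)}+1$, Lemma \ref{otrv} only gives $ot(e)\le\omega^{v(q_1)}$, so to get the strict bound $v(e')<v(q_1)$ use instead that $e\in J$ puts $Supp(e)$ inside $(Supp(p_1)\cup Supp(q_1))\cap(-\infty,s]$ with $s<0$, which has order type $<\omega^{v(q_1)}$ by Lemma \ref{admul} (also, $e'$ need not be germ-like, but you only use $e'\notin J$).
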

\begin{proof}
We have $p_1 = k(q_1+r_1) $, $p_2 = k^{-1}(q_2+r_2) $ where $v(r_1) <v(p_1)$, $v(r_2) <v(p_2)$. Hence $(*)r_2q_1 + r_1q_2 +r_1r_2 = 0$.  Moreover, as $p_1,p_2$ are germ-like, we have by Lemma \ref{otrv} $ot(r_1)<v(p_1)$, $ot(v_2)<v(p_2) $.

By the valuation inequality we get $v(r_2q_1)=v(r_1q_2) $. Without loss of generality suppose $v(q_1) < v(q_2) $. If $r_1 \notin J$ then $v(q_2) \le v(q_1) + v(r_2) $. As $cf(q_1) \cap cf(q_2) = \emptyset $ we have that $cf(q_2) \subseteq cf(r_2) $ which contradicts the fact that $v(r_2) < v(q_2) $. Hence $r_1, r_2 \in J $. Let $\gamma = max(sup(r_1), sup(r_2)) $. By $(*)$ we have $(q_1r_2)^{|\gamma} + (q_2r_1)^{|\gamma} \in J$. By Proposition \ref{conv} we obtain \\$q_1r_2^{|\gamma}+q_2r_1^{|\gamma} \in J $. Without loss of generality, suppose $r_1^{|\gamma} \notin J $. As before, we have $cf(q_2) \subset cf(r_2^{|\gamma}) $, which contradicts the fact that $ot(r_2)<v(q_2)$. Hence, $r_1^{|\gamma}, \in J $. By the definition of $\gamma$ we must have $r_1 = 0 $. Then by $(*)$ we have $q_1r_2 = 0$ which implies that also $r_2 = 0$. $\qed$
\end{proof}
\begin{lemma} \label{fromRVPrin}
Let $p$ germ-like such that $v(p)$ is additively principal and let $r,r_1, r_2 \in J_{v(p)}$ germ-like. If $kp(p+r) = (p+r_1)(kp+kr+r_2)$ then \\$r_1=r_2=0$.
\end{lemma}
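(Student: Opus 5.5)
Expanding the identity $kp(p+r) = (p+r_1)(kp+kr+r_2)$ and cancelling $kp^2 + kpr$ from both sides gives the basic relation
\[ (*)\qquad 0 = p r_2 + k r_1 p + k r_1 r + r_1 r_2 , \]
that is, $p(r_2 + k r_1) = -r_1(kr + r_2)$. The plan is to compare values on both sides. Since $v$ is additive with respect to $\oplus$, the left side has value $v(p)\oplus v(r_2+kr_1)$. The right side has value $v(r_1)\oplus v(kr+r_2)$, where $v(r_1), v(kr+r_2) < v(p)$. Because $v(p)=\omega^{\beta}$ is additively principal, the Hessenberg sum of two ordinals below $\omega^\beta$ is still below $\omega^\beta$. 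So if $r_2 + kr_1 \notin J$, the left side has value at least $\omega^\beta$ and the right side has value strictly less, a contradiction. Hence $r_2 + kr_1 \in J$, and then the right side $r_1(kr+r_2)$ lies in $J$ as well. Since the product of two elements outside $J$ is outside $J$ (the values add), either $r_1\in J$ or $kr+r_2\in J$.

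Next I would reduce everything to statements about elements of $J$ and use truncations, in the manner of the second half of the proof of Proposition \ref{fromrv}. Write $r_2 = -kr_1 + s$ with $s \in J$.
\begin{itemize}
\item If $r_1 \in J$, then $r_2 \in J$ too.
\item If instead $kr + r_2 \in J$, then $k(r - r_1) \in J$. In this case I would use the germ-like hypothesis together with Lemma \ref{otrv}-type order-type bounds ($ot(r_i) < ot(p)$) to force $r_1 \in J$ in any event, or else handle the case directly by the same argument with roles exchanged.
\end{itemize}
Once $r_1, r_2 \in J$, set $\gamma = \max(sup(r_1), sup(r_2)) < 0$, assuming they are not both zero. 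Apply the translated truncation at $\gamma$ to $(*)$ and use Proposition \ref{conv}. Since $p,r$ are germ-like with support reaching $0$, only the terms with $\delta = 0$ on the factor $p$ (resp. $kr$) survive modulo $J$. This yields
\[ p\, r_2^{|\gamma} + k\, r_1^{|\gamma}(p + r) + r_1^{|\gamma} r_2^{|\gamma}\text{-type terms} \equiv 0 \bmod J. \]
Here the cross term $r_1r_2$ contributes only in degree $\le 2\gamma<\gamma$, so it vanishes at $\gamma$ modulo $J$. We thus get $p(r_2^{|\gamma} + k r_1^{|\gamma}) + k r_1^{|\gamma} r \in J$.

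Now the same valuation argument applies: $v(r_1^{|\gamma}) \le$ something below $\omega^\beta$, because $ot(r_1) < \omega^\beta$ forces the translated truncation to have smaller value. So $v(k r_1^{|\gamma} r) < v(p)$, and therefore $r_2^{|\gamma} + k r_1^{|\gamma} \in J$ and $r_1^{|\gamma} r \in J$. If $r \notin J$, this gives $r_1^{|\gamma}\in J$, hence $r_2^{|\gamma}\in J$. This contradicts the choice of $\gamma$: at least one of $r_1^{|\gamma}, r_2^{|\gamma}$ has a nonzero constant term, since $\gamma$ is the supremum of its support and, being germ-like, it attains it. Hence $r_1=r_2=0$. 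The case $r\in J$ (in particular $r=0$) reduces to $p(r_2^{|\gamma}+kr_1^{|\gamma})\in J$. It then needs a second truncation/iteration argument, by descending on $\gamma$ using well-ordering of supports, to show $r_2 = -kr_1$. Substituting back into $(*)$ gives $r_1(kr - kr_1)=0$, so $r_1 = 0$ or $r_1 = r$, and one must exclude $r_1=r\neq 0$ from the hypotheses.

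The main obstacle I expect is exactly this last point, namely the degenerate solutions. When $r\in J$, the equation genuinely admits $r_1 = r$, $r_2=-kr$, since $kp(p+r) = (p+r)(kp)$. So the proof must exploit precisely which hypothesis excludes it: presumably $r\notin J$, or a normalization built into ``germ-like'' for $r$. I would first check whether $r\in J_{v(p)}$ germ-like with $r\in J$ can occur, and if so treat $r_1=r$ as the only alternative and show it contradicts the setup in which the lemma is applied.
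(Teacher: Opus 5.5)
There is a genuine gap, and it cannot be repaired, because the statement as written is false. The step that fails is your second bullet. In the branch $kr+r_2\in J$ you hope to ``force $r_1\in J$ in any event''. But $(r_1,r_2)=(r,-kr)$ satisfies the identity for \emph{every} $r$, since $(p+r)(kp+kr-kr)=kp(p+r)$. In this solution $kr+r_2=0\in J$ while $r_1=r$.

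Concretely, take $p\in P_1$, so $v(p)=1$ is additively principal, and set $r=r_1=1$, $r_2=-k$. Nonzero constants lie in $P_0$, hence are germ-like, and they have value $0<v(p)$. Alternatively, with $v(p)=\omega$, take any $r\in P_1$. In either case all hypotheses hold and $r_1\neq 0$. So your claim that the case $r\notin J$ ends with $r_1=r_2=0$ rests on the premise $r_1,r_2\in J$, which you never prove and which is false in general.

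Your closing diagnosis is also inverted. A nonzero germ-like element has supremum of support $0$, whereas $J$ consists of the series whose support is bounded by a negative number. So a germ-like element of $J$ is $0$. Hence ``$r\in J$'' just means $r=0$, where the swapped solution coincides with the trivial one, and the bad case is exactly $r\neq 0$. The same remark shows your truncation stage is vacuous: once $r_1,r_2\in J$ they are already $0$, so all the content sits in the step that fails.

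The paper's proof has the same defect. From $v(p(kr_1+r_2))=v(krr_1+r_1r_2)<v(p)$ it concludes ``$r_1,r_2\in J$'', while this only gives $kr_1+r_2\in J$, as you correctly derived. What is true is: either $r_1=r_2=0$, or $r_1=r$ and $r_2=-kr$. The second alternative is the same factorization with the two factors swapped, so it is harmless for unique factorization.

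Your opening argument nearly proves this corrected statement if you rewrite $(*)$ as $(p+r_1)(kr_1+r_2)=kr_1(r_1-r)$.
\begin{enumerate}
\item Comparing values gives $kr_1+r_2\in J$. Then either $r_1=0$, whence $r_2=0$, or $r_1-r\in J$.
\item In the latter case assume $r_1\neq 0$, so $r_1\notin J$. Suppose $s=kr_1+r_2$ and $u=r_1-r$ are not both $0$. Multiply the identity by $t^{-\gamma}$, where $\gamma<0$ is the larger of their suprema of support. Then one of $t^{-\gamma}s$, $t^{-\gamma}u$ lies outside $J$.
\item Since $ot(u)\le ot(r_1)\oplus ot(r)<\omega^{v(p)}$, we get $v(t^{-\gamma}u)<v(p)$. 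Comparing values once more, using $r_1\notin J$, yields a contradiction.
\end{enumerate}
Hence $s=u=0$, that is, $r_1=r$ and $r_2=-kr$.
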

\begin{proof}
By assumption we have $(*)p(kr_1+r_2)+krr_1+r_1r_2=0 $. Hence, $v(p(kr_1+r_2))=v(krr_1+r_1r_2) $. As $v(p)$ is additively principal we have $v(r_1)+v(r_2), v(r_1)+v(r)<v(p) $. Hence, $r_1, r_2 \in J$. 

\underline{ Case $r \neq 0$}: 

Let $\gamma = max ( sup(r_1), sup(r_2) )$. By $(*)$ we have that  \[(p(kr_1+r_2))^{|\gamma}+(krr_1)^{|\gamma}+(r_1r_2)^{|\gamma}=0 .\] By Proposition \ref{conv} we obtain $p(kr_1^{|\gamma}+r_2^{|\gamma}) + kr{r_1}^{|\gamma} \in J$. Hence, \\$v(p(kr_1^{|\gamma}+r_2^{|\gamma}))=v(kr{r_1}^{|\gamma}) $. As by Lemma \ref{otrv} $ot(r_1), ot(r_2) <ot(p)$ we have that $v(r_1^{|\gamma}), v(r_2^{|\gamma}) <v(p) $. Hence, as $v(p)$ is additively principal, we have $v(kr{r_1}^{|\gamma})<v(p)$. Hence $r_1^{|\gamma},r_2^{|\gamma} \in J $. Hence, $r_1=r_2=0$. 

\underline {Case $r=0$ }:

Let $\gamma = sup(r_1r_2) $. By (*) we have $ (p(kr_1+r_2))^{|\gamma}+(r_1r_2)^{|\gamma}=0$. Hence, $v((p(kr_1+r_2))^{|\gamma})=v((r_1r_2)^{|\gamma})$. As $v(p)$ is additively principal then for every $\delta > \gamma$ we have $v(p^{|\gamma - \delta})(kr_1+r_2)^{|\delta} <v(p) $. Hence, by Proposition \ref{conv} we have $v((p(kr_1+r_2))^{|\gamma})= v(p (kr_1+r_2)^{|\gamma}) $. Hence, \\$ v(p (kr_1+r_2)^{|\gamma})=v((r_1r_2)^{|\gamma})$. By Lemma \ref{admul} we have \\$ot(r_1r_2)\le ot(r_1)\odot ot(r_2) < ot(p) $. Hence $v((r_1r_2)^{|\gamma})<v(p)$. Hence, $(r_1r_2)^{|\gamma} \in J $. Hence, $r_1r_2=0$. Hence, without loss of generality $r_1=0$. By (*) we have $pr_2=0 $. Hence, $r_2=0$. 
 $\qed$
\end{proof}
\begin{lemma} \label{sucRV}
Let $p,b,r$ germ-like such that $v(p)=\omega^{\alpha}$, $v(b)=1$,\\$v(r) \le v(p)$. Suppose $r_1 \in J_{\omega^{\alpha}}$, $r,r_2 \in J_{\omega^{\alpha}+1}$ such that \\ $p(bp+r)=(p+r_1)(bp+r+r_2) $. Then $r_1 = r_2 = 0$.
\end{lemma}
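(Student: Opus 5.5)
Expanding both sides of $p(bp+r)=(p+r_1)(bp+r+r_2)$ and cancelling $bp^2+pr$ gives the basic relation
\[(*)\qquad p r_2 + r_1(bp+r) + r_1 r_2 = 0 .\]
The plan follows Lemma \ref{fromRVPrin}: use valuations to force $r_1,r_2\in J$, then use translated truncations at $\gamma=\max(sup(r_1),sup(r_2))$ together with Proposition \ref{conv} to push everything into $J$, which forces $r_1=r_2=0$.

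\emph{First step: forcing $r_1,r_2\in J$.} Suppose $r_1\notin J$. Then $v(r_1)<\omega^\alpha$, so $v(r_1(bp+r))\le v(r_1)\oplus\omega^\alpha\oplus 1$. Also $v(pr_2)=\omega^\alpha\oplus v(r_2)$ with $v(r_2)\le\omega^\alpha$. From $(*)$, the maximum among the three valuations is attained at least twice.
\begin{itemize}
\item If $r_2\notin J$, then $v(r_1r_2)<v(pr_2)$, because $v(r_1)<\omega^\alpha$ and $\omega^\alpha$ is additively principal. So $v(pr_2)=v(r_1(bp+r))$, that is, $\omega^\alpha\oplus v(r_2)=v(r_1)\oplus\omega^\alpha\oplus 1$ (using $v(bp+r)=\omega^\alpha+1$). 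This gives $v(r_2)=v(r_1)\oplus 1$.
\item This is not contradictory by valuations alone, since it is compatible with $r_2\in J_{\omega^\alpha+1}$. Hence valuations alone do not kill $r_1$, and I need the $Res$/truncation machinery.
\end{itemize}

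\emph{Second step: truncation.} Take $\gamma$ close to $0$ in $Res(bp+r)$. Apply Lemma \ref{smallder} to $r_1(bp+r)$, since $v(r_1)<\omega^\alpha$ and $v^p(bp+r)=1$ makes this term fall in the small case. Apply Proposition \ref{leib} to $pr_2$. This gives
\[(pr_2)^{|\gamma}\equiv p^{|\gamma}r_2+r_2^{|\gamma}p .\]
Truncating $(*)$ at $\gamma$ yields a relation in $RV^{\omega^\alpha}$ (modulo $J_{\omega^\alpha}$ after the appropriate valuation bookkeeping). The degree-$\omega^\alpha$ part compares $r_2^{|\gamma}p$ with $r_1(bp+r)^{|\gamma}$ and $r_1^{|\gamma}(bp+r)$.

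Using Corollary \ref{litBig}, I discard the set $Big(r)$, which has order type $<1=v^p$ and is therefore finite, so I can restrict to $\gamma$ close enough to $0$. Using Lemma \ref{otrv}, $ot(r_1)<\omega^{\omega^\alpha}$, so $v(r_1^{|\gamma})<\omega^\alpha$ and $v(r_2^{|\gamma})\le v^r$-level quantities. Comparing valuations then gives $v(r_1)\oplus\omega^\alpha=\omega^\alpha$, so $v(r_1)=0$, meaning $r_1\in J+K$. Thus $r_1\in J+K$; the constant part is handled by reading the $rv$ leading terms of $(*)$, and then $r_2$ is similarly forced into $J$.

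\emph{Third step: the $J$ case.} With $r_1,r_2\in J$, set $\gamma=\max(sup(r_1),sup(r_2))$ and truncate $(*)$. By Proposition \ref{conv},
\[p\,r_2^{|\gamma}+r_1^{|\gamma}(bp+r)+(r_1r_2)^{|\gamma}\in J .\]
The term $(r_1r_2)^{|\gamma}$ lies in $J$ when $\gamma$ is the max of the two sups and both are $<0$. The remaining two terms must then match in valuation. The valuations are bounded as in the first step: by Lemma \ref{otrv}, $v(r_1^{|\gamma})<\omega^\alpha$ and $v(r_2^{|\gamma})\le\omega^\alpha$.

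\emph{Main obstacle.} I expect the main obstacle to be this equal-valuation scenario $v(r_2^{|\gamma})=v(r_1^{|\gamma})\oplus 1$, where cancellation is possible in principle. To resolve it, I would iterate the truncation argument one more level. I would take residues via Lemma \ref{ResRV}, which applies since the principal value is $1$, and use Proposition \ref{RVPrime}: $rv(b)$ is prime in $RV$ with $v=1$. Then $rv(b)$ divides $rv(p\,r_2^{|\gamma})$, so it divides $rv(r_2^{|\gamma})$. Writing $r_2^{|\gamma}\equiv b s$, the relation becomes $p(bs)+r_1^{|\gamma}(bp+r)\approx 0$, i.e. $bp\,(s+r_1^{|\gamma})+r\,r_1^{|\gamma}\approx 0$. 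Since $v(r\,r_1^{|\gamma})<v(bp)\oplus v(r_1^{|\gamma})$, this forces $rv(s)=-rv(r_1^{|\gamma})$.

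Pushing this down to the next truncation should give a descending sequence of supports, contradicting well-ordering, unless $r_1^{|\gamma}\in J$. That in turn gives $r_1=0$ by the choice of $\gamma$, and then $pr_2=0$ gives $r_2=0$.
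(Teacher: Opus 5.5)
Your first step is correct; it is a weak form of the paper's opening observation $rv(r_2)=-rv(b)\,rv(r_1)$. The second step, however, breaks down.

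\textbf{Where your argument fails.} Lemma \ref{smallder} applied to $r_1\cdot(bp+r)$ needs $v(r_1)<v^p(bp+r)=1$, i.e.\ $r_1\in J+K$. That is essentially what you are trying to prove. With the correct expansion from Proposition \ref{leib}, the truncation of $r_1(bp+r)$ contains the term $r_1\,(bp+r)^{|\gamma}$, of value $v(r_1)\oplus\omega^\alpha$. Since $v(r_2)=v(r_1)+1$, the term $p\,r_2^{|\gamma}$ can have exactly the same value, and nothing prevents the two from cancelling. So ``comparing valuations'' does not give $v(r_1)\oplus\omega^\alpha=\omega^\alpha$. Several later claims are asserted rather than proved: that the constant part of $r_1$ is killed by the leading terms of $(*)$, that $r_2$ is then forced into $J$, and the final ``descending sequence of supports''.

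\textbf{The statement as written is false, so no repair is possible.} Take $r_1=1$, $r=b$, $r_2=-b$. Every hypothesis holds: $v(r)=1\le\omega^\alpha$, $v(r_1)=0<\omega^\alpha$, $v(r_2)=1$. Moreover $p(bp+b)=bp(p+1)=(p+1)(bp+b-b)$, yet $r_1\neq 0$ and $r_2\notin J$; this refutes your constant-part claim. For $\alpha\ge 1$ one can also take $r_1=b$, $r=b^2$, $r_2=-b^2$, since $p(bp+b^2)=(p+b)\,bp$. Here $v(r_1)=1$, contradicting your intermediate conclusion $v(r_1)=0$. For $\alpha\ge1$, the choice $r_1=1$, $r=p+b+1$, $r_2=-(b+1)$ gives an example with $v(r)=v(p)$. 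In general, whenever $bp+r=e(p+r_1)$ with $rv(e)=rv(b)$, setting $r_2=-er_1$ solves the equation. This is just a reshuffling of the factors of $p\,e\,(p+r_1)$, so the cancellation you identified as the main obstacle really happens.

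\textbf{The paper's proof fails on the same examples.} The paper takes a different route. It refers the case $v(r)<\omega^\alpha$ to the argument of Lemma \ref{fromRVPrin}. For $v(r)=\omega^\alpha$ it tracks the exact power $n$ of $rv(b)$ dividing $rv(r_2)$ and uses Proposition \ref{RVPrime}, much as in your last paragraph. The first example falls under its case $v(r)<\omega^\alpha$ when $\alpha\ge1$. In the third example one has $n=1$, $r'_1=1$, $d=0$ and $br_1+r_2=-1$. The step the paper calls clear would then identify $rv(-p)$ with $rv(pb^n(r'_1+r'_2))$, which is $0$ or divisible by $rv(b)$, while $rv(-p)$ is neither. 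Any correct version of the lemma needs extra hypotheses that exclude such reshufflings.
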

\begin{proof}
By assumption we have $(*)p(br_1+r_2)+rr_1+r_1r_2=0 $. As \\$v(rr_1)<v(pbr_1)$ and $v(r_1r_2)<v(pr_2)$ we have $rv(br_1)=rv(-r_2) $, hence $rv(b) | rv(r_2) $. Hence, $r_2 \in J_{\omega^{\alpha}} $. If $r \in J_{\omega^{\alpha}}$ we conclude as in the proof of Lemma \ref{fromRVPrin}. Otherwise, Let $n\ge 1$ be maximal such that $rv(b)^n | rv(r_2) $ and let $r_2 = b^nr'_2+c$ for some $c$ such that $v(c)<v(r_2)$. By Proposition \ref{RVPrime} we have that $rv(b)^{n-1}|rv(r_1)$ and $r_1 = b^{n-1}r'_1+d $ where $rv(b)$ does not divide $rv(r'_1) $, $v(d)<v(r_1)$. Clearly, \[rv(p(b^{n}r'_1+b^nr'_2) +pbd + pc)=rv(p(b^{n}r'_1+b^{n}r'_2)) .\] 
As $v(r)>v(r_2)$ we have 
 \[rv(p)rv(b)^{n}rv(r'_1+r'_2)= -rv(r)rv(b^{n-1})rv(r'_1) .\] Hence, $rv(p)rv(b)rv(r'_1+r'_2)=-rv(r)rv(r'_1) $. As $v(r)=v(p)$ additively principal we have that $rv(b) | rv(r'_1)$, a contradiction. $\qed$
\end{proof}

\subsection{The case $\omega^{\alpha}+\omega^{\beta}$}
In this subsection we prove unique factorization for germ-like elements with valuation $\omega^{\alpha}+\omega^{\beta} $, by proving unique factorization in the $RV$ sort. The following Proposition is fundamental. 
\begin{proposition} \label{2prin}
Suppose $rv(c_1b_1)=rv(c_2b_2) $ where $v^p(b_1) = v^p(b_2) $ and $0<v(c_1), v(c_2) < v^p(b_1) $. Then $rv(c_1)=krv(c_2) $, $rv(b_1)=k^{-1}rv(b_2) $ for some $k \in K^{\times}$.
\end{proposition}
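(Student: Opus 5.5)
We plan to compare the two factorizations through translated truncations close to $0$, where the ``small'' factors $c_i$ decouple from the ``large'' factors $b_i$. Choose representatives and write $c_1b_1 = c_2b_2 + r$ with $v(r)<v(c_1b_1)$. Put $\beta = v^p(b_1)=v^p(b_2)$. Since $v(c_i)<\beta$ and $\beta$ is additively principal, we get $v(c_1b_1)=v(c_1)\oplus v(b_1)$ with $v^p(c_1b_1)=\beta$ and $v^r(c_1b_1)=v(c_1)\oplus v^r(b_1)$, and the same for index $2$. Comparing Cantor normal forms, $v(c_1)\oplus v^r(b_1)=v(c_2)\oplus v^r(b_2)$. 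We first restrict to a cofinal set of $\gamma$ near $0$ that lies in $Res(b_1)\cap Res(b_2)$, using Lemma \ref{Resot}; by the Remark after Lemma \ref{ResRV} we may assume these residue sets agree near $0$. By Corollary \ref{litBig}, $Big^{v^r(c_1b_1)}(r)$ has order type $<\beta$. We discard it, and what is left still has order type $\omega^\beta$ near $0$.

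For every such $\gamma$, Lemma \ref{smallder} (with $a=c_i$, $b=b_i$) gives
\[ (c_ib_i)^{|\gamma} \equiv c_i b_i^{|\gamma} \mbox{ mod } J_{v^r(c_ib_i)} \]
for the relevant range of values. We will check that the error $J_{deg(b)}$ there is absorbed below $v^r(c_ib_i)$. If it is not, we instead use the Leibniz rule, Proposition \ref{leib}, with $v^p(c_i)\le v^p(b_i)$, which gives $(c_ib_i)^{|\gamma}=c_i^{|\gamma}b_i+b_i^{|\gamma}c_i+r'$. Here the term $c_i^{|\gamma}b_i$ is nonzero only on a set of $\gamma$ of order type $\le \omega^{v(c_i)}<\omega^\beta$, by Lemma \ref{bpaf}, so we discard those $\gamma$ as well. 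Together with $r^{|\gamma}\in J_{v^r(c_1b_1)}$, we obtain for cofinally many $\gamma$:
\[ rv(c_1)\,rv(b_1^{|\gamma}) = rv(c_2)\,rv(b_2^{|\gamma}). \]

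Next we extract the proportionality constant. Fix one such $\gamma_0$ and note that $v(b_i^{|\gamma_0})=v^r(b_i)$. We then need a cancellation statement that lets us conclude $rv(c_1)=k\,rv(c_2)$. The natural route is to take the ratio of two such identities at $\gamma$ and $\gamma'$:
\[ rv(c_1)\,rv(b_1^{|\gamma}) = rv(c_2)\,rv(b_2^{|\gamma}), \qquad rv(c_1)\,rv(b_1^{|\gamma'}) = rv(c_2)\,rv(b_2^{|\gamma'}). \]
These give $rv(b_1^{|\gamma})\,rv(b_2^{|\gamma'}) = rv(b_1^{|\gamma'})\,rv(b_2^{|\gamma})$. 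This is not yet enough, so we argue instead on the $\gamma$-indexed family as a whole. We view $b_i \mapsto (rv(b_i^{|\gamma}))_\gamma$ as determining $rv(b_i)$. This is the content of Lemma \ref{ResRV} when $\beta=1$, and for general $\beta$ it follows from a direct analogue of that argument, using the Corollary \ref{litBig} bound in place of ``finite''.

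The main obstacle is to show that the only way $c_1\cdot(\text{family for } b_1) = c_2\cdot(\text{family for } b_2)$ can hold in $RV$ is up to a scalar. The approach we would try first is to pass to the whole series: consider $c_1b_2 - k c_2 b_1$, or more directly compare $c_1 b_1$ and $c_2 b_2$ through the coefficient description of $rv$ for principal elements, where $rv$ equality means equal coefficients on almost all of the support. The degree-$\beta$ tail of $b_i$ contributes support of order type $\omega^\beta$, while $c_i$ contributes a ``block pattern'' of smaller order type that repeats along that tail. Equality almost everywhere then forces the blocks to agree up to a scalar, which gives $rv(c_1)=k\,rv(c_2)$. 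Cancelling $rv(c_1)$ in $rv(c_1b_1)=rv(c_2b_2)$ then yields $rv(b_1)=k^{-1}rv(b_2)$, because $RV$ multiplication is cancellative for nonzero classes, $v$ being additive.
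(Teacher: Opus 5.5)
There is a genuine gap. It starts with a miscomputation that reverses the roles of $c_i$ and $b_i$. Every Cantor-normal-form term of $v(c_i)$ lies below $v^p(b_i)$, so $v(c_ib_i)=v(b_i)\oplus v(c_i)=v(b_i)+v(c_i)$. Hence $v^p(c_ib_i)=v^p(c_i)$ (the \emph{smallest} term), not $\beta$, and $v^r(c_ib_i)=v(b_i)+v^r(c_i)$, not $v(c_i)\oplus v^r(b_i)$. Consequently $Res(c_1b_1)$ has order type $\omega^{v^p(c_1)}$ and is governed by $c_1$.

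The lemmas you cite put the truncation on the small factor. Lemma \ref{smallder} with $a=c_i$, $b=b_i$ gives $(c_ib_i)^{|\gamma}\equiv c_i^{|\gamma}b_i$, not $c_ib_i^{|\gamma}$. Your fallback via Proposition \ref{leib} does not recover the identity either. The error there only satisfies $v(r')<v^r(c_i)\oplus v(b_i)=v^r(b_i)+\beta+v^r(c_i)$, which is strictly larger than $v(c_ib_i^{|\gamma})=v^r(b_i)+v(c_i)$, so the term you want to isolate can be swallowed by the error. Likewise, $\gamma\notin Big^{v^r(c_1b_1)}(r)$ only gives $v(r^{|\gamma})<v(b_1)+v^r(c_1)$, which is far above the level of your displayed identity.

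So $rv(c_1)\,rv(b_1^{|\gamma})=rv(c_2)\,rv(b_2^{|\gamma})$ is not established. Even granting it, the passage from this family of identities to $rv(c_1)=k\,rv(c_2)$ is left open, as you acknowledge:
\begin{itemize}
\item the ratio argument does not suffice;
\item the analogue of Lemma \ref{ResRV} for general $\beta$ is not proved;
\item the ``block pattern'' argument is only a heuristic.
\end{itemize}

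The missing idea is to truncate where the truncation actually lands and to induct on the small factor. Take $\gamma\in Res(c_1b_1)\setminus Big^{v^r(c_1b_1)}(r)$ arbitrarily close to $0$, which exists by Corollary \ref{litBig}. Lemma \ref{strprin} then gives $rv(c_1^{|\gamma}b_1)=rv(c_2^{|\gamma}b_2)$. Here $c_i^{|\gamma}$ has valuation $v^r(c_i)$, hence strictly smaller Cantor complexity, and still $v(c_i^{|\gamma})<v^p(b_i)$. Inducting on $cc(c_1)$ reaches the base case $v(c_i)=0$, where the $c_i$ are nonzero constants modulo $J$ and the conclusion is immediate. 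This yields $rv(b_1)=k^{-1}rv(b_2)$ directly, and then $rv(c_1)=k\,rv(c_2)$ follows from the cancellation in $RV$ that you correctly describe at the end. So the proportionality of the $b_i$ comes first and that of the $c_i$ follows by cancellation, the opposite order from your plan.
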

\begin{proof}
We prove by induction on $n=cc(c_1) $.

In case $n=0$ we have $c_1, c_2 \in J+K $. Choose $k = \frac{sup(c_1)}{sup(c_2)} $. 

Otherwise, let $c_1b_1 = c_2b_2 + r $ where $v(r) < v(c_1b_1) $. By Corollary \ref{litBig} we have $\gamma \in Res(c_1b_1) -  Big^{v^r(c_1b_1)}(r) $ arbitrarily close to $0$. By Lemma \ref{strprin} we have \[(c_1b_1)^{|\gamma} = c_1^{|\gamma}b_1 \mbox{ mod } J_{v^r(c_1b_1)} , v(r^{|\gamma}) < v^r(c_1b_1) , (c_2b_2)^{|\gamma} = c_2^{|\gamma}b_2 \mbox { mod } J_{v^r(c_1b_1)} .\] Hence, $rv(c_1^{|\gamma}b_1) = rv(c_2^{|\gamma}b_2 )$. As $\gamma \in Res(c_1) $ we obtain by the induction assumption that $rv(b_1) = k^{-1}rv(b_2) $ for some $k \in K^{\times} $. Hence also\\$rv(c_1) = krv(c_2) $. 
\end{proof}
\begin{remark}
In this case, one could prove also by induction just on $v(c_1)$ rather than $cc(c_1)$. 
\end{remark}
\begin{corollary}
Suppose $c_1b_1=c_2b_2$ where $v^p(b_1) = v^p(b_2) $ and\\$0<v(c_1), v(c_2) < v^p(b_1) $. Then $c_1=kc_2 $, $b_1=k^{-1}b_2 $ for some $k \in K^{\times}$.
\end{corollary}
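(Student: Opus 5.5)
The plan is to deduce the corollary from Proposition \ref{2prin} and then lift from $RV$ to the ring itself. Since $c_1b_1=c_2b_2$, we certainly have $rv(c_1b_1)=rv(c_2b_2)$. Proposition \ref{2prin} then gives some $k\in K^{\times}$ with $rv(c_1)=k\,rv(c_2)$ and $rv(b_1)=k^{-1}rv(b_2)$. Replacing $c_2$ by $kc_2$ and $b_2$ by $k^{-1}b_2$, we may assume $k=1$. We then write
\[c_1=c_2+r_1,\qquad b_1=b_2+r_2,\qquad v(r_1)<v(c_1),\ v(r_2)<v(b_1).\]
Expanding the identity gives
\[(*)\qquad c_2r_2+r_1b_2+r_1r_2=0,\]
and the goal is to show $r_1=r_2=0$.

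The first step is to show $r_1,r_2\in J$, by comparing valuations in $(*)$. If $r_1\notin J$, then $v(r_1b_2)\ge v(b_2)$ has principal part at least $v^p(b_1)$. On the other hand, $v(c_2r_2)=v(c_2)\oplus v(r_2)$, where $v(c_2)<v^p(b_1)$ and $v(r_2)<v(b_1)$. So the two largest terms of $(*)$ must cancel in $rv$, which forces a Cantor-coefficient comparison. The bound $v(c_i)<v^p(b_1)$ means every exponent in $cf(c_i)$ lies strictly below every exponent in $cf(b_i)$. This plays the role that $cf(q_1)\cap cf(q_2)=\emptyset$ plays in Proposition \ref{fromrv}, and I would copy that argument. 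The comparison shows that $v(r_2)$ would have to contain $cf(b_2)$ with full multiplicity, contradicting $v(r_2)<v(b_2)$. So $r_1\in J$, and symmetrically $r_2\in J$.

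The second step takes $\gamma=\max(sup(r_1),sup(r_2))<0$ and applies the translated truncation at $\gamma$ to $(*)$. By Proposition \ref{conv}, modulo $J$ this gives $c_2r_2^{|\gamma}+b_2r_1^{|\gamma}\in J$, with the $r_1r_2$ term absorbed into $J$ by the choice of $\gamma$. For the germ-like setting we also have, by Lemma \ref{otrv}, $ot(r_1)<ot(c_1)$ and $ot(r_2)<ot(b_1)$. Hence $v(r_1^{|\gamma})<v(c_1)<v^p(b_2)$ and $v(r_2^{|\gamma})<v(b_2)$. The same valuation comparison as in the first step then forces $r_1^{|\gamma},r_2^{|\gamma}\in J$. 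That contradicts the choice of $\gamma$ as the supremum of a support unless $r_1=0$ or $r_2=0$. In either case $(*)$ collapses to a single product equal to zero, for example $c_2r_2=0$, and since the ring is a domain both $r_i$ vanish.

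The main obstacle is the valuation comparison in the first two steps. The semi-valuation $v$ is only multiplicative with respect to $\oplus$, so I have to check carefully that the inequality $v(c_i)<v^p(b_1)$ really prevents cancellation. My plan is to compare the multiset of Cantor exponents of $v(c_2)\oplus v(r_2)$ with that of $v(r_1)\oplus v(b_2)$, using the fact that all exponents of $c_i$ are strictly smaller than the smallest exponent of $b_i$. I would also need the germ-like hypotheses, implicit in the corollary, in order to use Lemma \ref{otrv}. If they fail, I would first reduce to the case $rv(c_i)$ principal or of the form $k+J$.
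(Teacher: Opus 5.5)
Your proposal is correct and is essentially the paper's proof. The paper gets the $rv$-level factorization from Proposition \ref{2prin}, observes that $v(c_1)<v^p(b_1)$ forces $cf(c_1)\cap cf(b_1)=\emptyset$, and invokes Proposition \ref{fromrv}; your two steps are the proof of Proposition \ref{fromrv} written out inline. One aside is wrong but harmless: the claim that $v(r_1b_2)$ has principal part at least $v^p(b_1)$ fails when $v^p(r_1)<v^p(b_1)$. Your argument never uses it, since the Cantor-exponent comparison giving $v(r_2)\ge v(b_2)$ (resp.\ $v(r_2^{|\gamma})\ge v(b_2)$) does all the work.
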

\begin{proof}
As $v(c_1)<v^p(b_1)$ we have $cf(c_1) \cap cf(b_1)=\emptyset  $. By Proposition \ref{fromrv} we obtain the required. 
\end{proof}
\begin{corollary}
Let $b$ germ-like such that $v(b) = \omega^{\alpha} +1 $ or $v(b) = \omega^{\alpha}+2 $ where $\alpha > 0 $. Then $b$ admits a unique factorization into irreducible elements.
\end{corollary}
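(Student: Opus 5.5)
Existence of a factorization is immediate from the semi-valuation: since $v(bc)=v(b)\oplus v(c)$ and non-units outside $J+K$ have positive value, any factorization of $b$ has at most $cc(b)\le 3$ factors of positive value. So the content of the corollary is uniqueness. The plan is to pin down what an irreducible factorization can look like and then apply the preceding Corollary (the exact version of Proposition \ref{2prin}) together with Proposition \ref{fromrv}, Lemma \ref{sucRV} and Theorem \ref{Prime}.

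\textbf{Step 1 (shape of factorizations).} Suppose $b=\prod_i p_i$ with each $p_i$ irreducible and not a unit. Then each $v(p_i)>0$, and the Hessenberg sum of the $v(p_i)$ equals $\omega^{\alpha}+1$ or $\omega^{\alpha}+2$. Because Cantor normal forms are unique, the multiset of Cantor terms $\{\omega^\alpha,1\}$ or $\{\omega^\alpha,1,1\}$ must be distributed among the factors. This leaves two possibilities:
\begin{itemize}
\item one factor $p$ whose value contains $\omega^\alpha$ (namely $\omega^\alpha$, $\omega^\alpha+1$ or $\omega^\alpha+2$), with all remaining factors of value $1$;
\item the case $v(b)=\omega^\alpha+1$ with $b$ itself irreducible.
\end{itemize}
Factors of value $1$ are germ-like up to a unit, hence lie in $P_1$ or $P_1+K$. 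By Theorem \ref{Prime} they are prime.

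\textbf{Step 2 (cancelling the prime factors).} Take two factorizations $b=p\prod q_i=p'\prod q'_j$ of the above shape. Each $q_i$ is prime, so it divides one of the factors on the other side. It cannot divide $p'$ unless $p'=q_iu$ with $v(u)=v(p')-1$, and $u$ a non-unit contradicts irreducibility of $p'$ when $v(p')>1$. Therefore $q_i$ is associate to some $q'_j$. Cancelling (we are in a domain) reduces both sides to $p=kp'$. If on one side the $\omega^\alpha$-carrier absorbs a value-$1$ factor, say $v(p)=\omega^\alpha+1$ while $v(p')=\omega^\alpha$, then after cancellation we get $p=p'q'$. This contradicts irreducibility of $p$, so the numbers of value-$1$ factors agree.

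\textbf{Step 3 (where the RV lemmas enter).} The argument above uses primality of the value-$1$ elements, which I expect to be valid as stated. If one prefers to stay within the paper's RV machinery, the alternative route is as follows. From $rv(pq)=rv(p'q')$ with $v(p)=v(p')=\omega^\alpha$ and $v(q)=v(q')=1$, use Proposition \ref{RVPrime} to get $rv(q)\mid rv(p')$ or $rv(q)\mid rv(q')$. The first is excluded since $rv(p')$ has additively principal value, so $rv(q)=k\,rv(q')$ and $rv(p)=k^{-1}rv(p')$. Since $cf(p)\cap cf(q)=\{\alpha\}\cap\{0\}=\emptyset$ (because $\alpha>0$), Proposition \ref{fromrv} lifts this to the exact equalities $p=k^{-1}p'$ and $q=kq'$. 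For $\omega^\alpha+2$ one iterates, using Lemma \ref{sucRV} to handle the configuration $p(bp+r)$, where the residual term interacts with the value-$1$ factors.

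\textbf{Main obstacle.} The delicate point is showing that the single factor carrying $\omega^\alpha$ cannot split differently, for example $\omega^\alpha+1$ on one side versus $\omega^\alpha$ times a value-$1$ factor on the other, when those factors are not germ-like. Germ-likeness is needed to invoke Lemma \ref{otrv} inside Proposition \ref{fromrv}. To handle this I would first reduce to germ-like factors by absorbing constants, using that $b$ germ-like forces its divisors (modulo $J$) to be germ-like up to units, and then rely on the primality argument of Step 2.
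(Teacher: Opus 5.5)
Your Step 2 cancellation via primality is sound whenever a value-$1$ factor is present. It agrees with how the paper treats the split $v(p)=\omega^\alpha+1$, $v(q)=1$. The gap is in Step 1: your list of shapes is not exhaustive. For $v(b)=\omega^\alpha+2$, the two Cantor terms $1,1$ may sit together in a single \emph{irreducible} factor $q$ with $v(q)=2$. Nothing excludes this: multiplying some $p\in P_{\omega^\alpha}$ by an irreducible element of $P_2$ gives a germ-like $b$ of value $\omega^\alpha+2$. Your primality argument still extends to comparing such a factorization with one that contains a value-$1$ factor. But take two factorizations $b=pq=p'q'$ with $v(p)=v(p')=\omega^\alpha$ and $q,q'$ both irreducible of value $2$. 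There is no prime to cancel, and Step 2 says nothing.

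This is exactly where you need the tool you name in your plan but never use: the corollary following Proposition \ref{2prin}. Since $\alpha>0$, you have $v^p(p)=v^p(p')=\omega^\alpha$ and $0<v(q),v(q')<\omega^\alpha$, so that corollary gives $q=kq'$ and $p=k^{-1}p'$ at once. This is how the paper's proof goes: Theorem \ref{Prime} for the $(\omega^\alpha+1)\cdot 1$ split, and ``in all other cases $v(q)<v^p(p)$'', i.e.\ the exact form of Proposition \ref{2prin}.

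Your Step 3 does not close the hole, because Lemma \ref{sucRV} concerns the unrelated configuration $p(bp+r)=(p+r_1)(bp+r+r_2)$. What does work at the RV level is Proposition \ref{2prin} followed by Proposition \ref{fromrv}, since $cf(q)=\{0\}$ and $cf(p)=\{\alpha\}$ are disjoint; but that is just the same corollary. The germ-likeness of the factors, which you flag as the main obstacle, is also assumed tacitly by the paper and is not the real issue.
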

\begin{proof}
Suppose $b=pq $ where $p,q$ are irreducible, $v(p) = \omega^{\alpha}+1$, \\$v(q)=1$. Then, by Theorem \ref{Prime} $q$ is prime. Hence, if $b=p'q'$ where $p',q'$ are irreducible then without loss of generality we have $q = kq' $ for some $k \in K^{\times}$. Hence, $p = k^{-1}p' $, as required. In all other cases we have $v(q)<v^p(p)$.  $\qed$ 
\end{proof}
We are now left to prove unique factorization for the case $\alpha = \beta >1 $. We start by proving a crucial Lemma. 
\begin{lemma} \label{span}
Let $b,c \in P_{\omega^{\alpha}} $. If there exists some $\gamma \in Res(bc)$ such that $\gamma \notin Res(b)$ then $rv(b) \in Sp_K(rv(c)) $.
\end{lemma}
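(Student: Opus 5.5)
The plan is to expand $(bc)^{|\gamma}$ via Proposition \ref{conv} and read off the top-valuation terms. Since $b,c\in P_{\omega^\alpha}$, we have $v(bc)=\omega^\alpha\oplus\omega^\alpha=\omega^\alpha\cdot 2$, so $v^r(bc)=\omega^\alpha$. Thus $\gamma\in Res(bc)$ means $v((bc)^{|\gamma})=\omega^\alpha$.

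Proposition \ref{conv} gives
\[(bc)^{|\gamma}\equiv \sum_{\delta+\varepsilon=\gamma} b^{|\delta}c^{|\varepsilon} \mbox{ mod } J .\]
I would split this sum into three groups. The first is the term $\delta=\gamma,\ \varepsilon=0$, namely $b^{|\gamma}c$. The second is the term $\delta=0,\ \varepsilon=\gamma$, namely $bc^{|\gamma}$. The third consists of the terms with both $\delta,\varepsilon<0$.

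For the third group, each factor $b^{|\delta}$ with $\delta<0$ has order type $<\omega^\alpha$, so $v(b^{|\delta})<\omega^\alpha$, and similarly for $c^{|\varepsilon}$. Since $\omega^\alpha$ is additively principal, each product has valuation $<\omega^\alpha$. Only finitely many pairs contribute outside $J$, because both supports are well ordered. Hence the third group lies in $J_{\omega^\alpha}$.

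Similarly, $\gamma\notin Res(b)$ together with $v^r(b)=0$ (as $v(b)=\omega^\alpha$ is principal) gives $v(b^{|\gamma})<0$ or $=$ something with $b^{|\gamma}\in J$. Let me reformulate using $ot$ directly: $Res(b)$ for principal $b$ is the set where $b^{|\gamma}\notin J$ (valuation $0$ means in $J+K\setminus J$). So $\gamma\notin Res(b)$ means $b^{|\gamma}\in J$, and hence $b^{|\gamma}c\in J$.

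Therefore
\[(bc)^{|\gamma}\equiv b\,c^{|\gamma} \mbox{ mod } J_{\omega^\alpha}.\]
Then $v((bc)^{|\gamma})=\omega^\alpha$ forces $c^{|\gamma}\notin J$, so $c^{|\gamma}\equiv k\in K^\times$ mod $J$. This only shows $\gamma\in Res(c)$, which is not yet the claim, so this approach alone is too weak. I must use the finer structure: the claim $rv(b)\in Sp_K(rv(c))$ should come from $\gamma\in Res(bc)\setminus Res(b)$ arising by cancellation.

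I would reconsider the definition of $Res$: perhaps $\gamma\in Res(bc)$ with $\gamma\notin Res(b)$ but also $\gamma\notin Res(c)$ is what is intended, and the point is that generically $Res(bc)$ relates to $Res(b)\cup Res(c)$. The real content is this. Suppose $\gamma\in Res(bc)\setminus Res(b)$, i.e. $b^{|\gamma}\in J$. Then write $\gamma=\delta+\varepsilon$ ranging over the pairs. The valuation $\omega^\alpha$ of $(bc)^{|\gamma}$ must come from $k\,b$ with $k=c^{|\gamma}$ mod $J$, plus possibly contributions $b^{|\delta}c^{|\varepsilon}$ where one factor is in $K$ mod $J$ and the other is $b$ or $c$.

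So the main step is to take the top-$rv$ part of $(bc)^{|\gamma}$ as a $K$-linear combination of $rv(b)$ and $rv(c)$, say $k_1 rv(b)+k_2 rv(c)$. Then use $\gamma\notin Res(b)$, together with a second choice of $\gamma'$ near $0$ where the combination degenerates, to force a linear relation between $rv(b)$ and $rv(c)$. I expect the hard part to be making this second step precise. My first attempt would exploit that $Res(b)$ and $Res(bc)$ are cofinal of type $\omega$ (Lemma \ref{Resot}), and compare coefficients along them to extract $rv(b)=k\,rv(c)$.
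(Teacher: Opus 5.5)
Your proposal does not reach a proof. After the first case it stalls, and the last two paragraphs are a plan rather than an argument.

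\textbf{The missing case.} The root cause is a misreading of the hypothesis. Since $v^r(b)=0$, we have $Res(b)=\{\gamma<0 : v(b^{|\gamma})=0\}$, i.e.\ the $\gamma$ with $b^{|\gamma}\in (J+K)\setminus J$. So $\gamma\notin Res(b)$ does not force $b^{|\gamma}\in J$. It also allows $0<v(b^{|\gamma})<\omega^{\alpha}$, i.e.\ $b^{|\gamma}\notin J+K$, which happens e.g.\ when $\gamma$ is a limit point of $Supp(b)$. That is the case carrying the content, and you never treat it.

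The missing idea there is a cancellation argument followed by Proposition~\ref{2prin}:
\begin{enumerate}
\item Your own decomposition via Proposition~\ref{conv} already gives $(bc)^{|\gamma}\equiv b^{|\gamma}c+bc^{|\gamma}$ modulo $J_{\omega^{\alpha}}$; this is Proposition~\ref{leib} in this situation.
\item Since $v(b^{|\gamma}c)=v(b^{|\gamma})\oplus\omega^{\alpha}>\omega^{\alpha}=v((bc)^{|\gamma})$, the term $bc^{|\gamma}$ must cancel the leading part of $b^{|\gamma}c$. This forces $v(c^{|\gamma})>0$ and $rv(b^{|\gamma}c)=rv(-bc^{|\gamma})$.
\item As $0<v(b^{|\gamma}),v(c^{|\gamma})<\omega^{\alpha}=v^p(b)=v^p(c)$, Proposition~\ref{2prin} applies with $c_1=b^{|\gamma}$, $b_1=c$, $c_2=-c^{|\gamma}$, $b_2=b$, and gives $rv(c)=k^{-1}rv(b)$.
\end{enumerate}
This is exactly the paper's proof. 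The dependence between $rv(b)$ and $rv(c)$ comes from a cancellation at valuation strictly above $\omega^{\alpha}$ plus unique factorization in $RV$. It does not come from comparing $K$-linear combinations $k_1rv(b)+k_2rv(c)$ along $Res(b)$; that picture only describes the case $v(b^{|\gamma}),v(c^{|\gamma})\le 0$, which yields no information.

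\textbf{The case you worked out.} The case $b^{|\gamma}\in J$ cannot be rescued by choosing a second $\gamma'$. Your computation there is correct, and in that case the statement as literally written is false. For instance, take $\alpha=0$, $b=\sum_{n\ge 1}t^{-1/(2n)}$, $c=\sum_{n\ge 1}t^{-1/(2n+1)}$ and $\gamma=-1/(2m+1)$, with $m\ge 1$ arbitrary so that $\gamma$ is as close to $0$ as desired. Then $b^{|\gamma}\in J$ and $(bc)^{|\gamma}\equiv b$ modulo $J+K$, so $\gamma\in Res(bc)\setminus Res(b)$. On the other hand $b-kc\notin J+K$ for every $k\in K$, so $rv(b)\notin Sp_K(rv(c))$. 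Interleaving blocks of support gives such examples for every $\alpha$.

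The paper's proof tacitly assumes $v(b^{|\gamma})>0$: it asserts $v(b^{|\gamma}c)>\omega^{\alpha}$. That version is precisely what is used afterwards, to get $c^{|\gamma}\equiv c_{\gamma}$ modulo $J$ for $\gamma\in Res(cd)$. The right move was therefore to split into two cases: $b^{|\gamma}\in J$, which must be excluded, and $v(b^{|\gamma})>0$, handled as above.
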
 
\begin{proof}
Suppose that there exists such $\gamma$. Then by Proposition \ref{leib} we have that $v(b^{|\gamma}c + bc^{|\gamma})=\omega^{\alpha} $ where $v(b^{|\gamma}c), v(bc^{|\gamma}) > \omega^{\alpha} $. Hence, \\$rv(b^{|\gamma}c)=rv(-bc^{|\gamma}) $. As $v^p(b)=v^p(c)=v(b)=v(c) $ we have that $v(b^{|\gamma}), v(c^{|\gamma})<v^p(b) $. Hence, by Proposition \ref{2prin} we have some $k \in K^{\times}$ such that $rv(b)= krv(c) $, as required. 
\end{proof}
We are now ready to prove unique factorization in th $RV$ sort. 
\begin{proposition}
Suppose $rv(a)rv(b)=rv(c)rv(d)$ where $a,b,c,d \in P_{\omega^{\alpha}}$. Then there exists some $k \in K^{\times}$ such that without loss of generality \\$rv(b)=krv(d) $.
\end{proposition}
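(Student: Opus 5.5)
We compare the residue sets of both sides of $ab=cd+r$ with $v(r)<v(ab)=\omega^{\alpha}\oplus\omega^{\alpha}$, and reduce to Proposition \ref{2prin} through the Leibniz rule (Proposition \ref{leib}) and Lemma \ref{span}. Since $v^p(ab)=\omega^{\alpha}$ and $v^r(ab)=\omega^{\alpha}$, Corollary \ref{litBig} gives $ot(Big^{\omega^{\alpha}}(r))<\omega^{\alpha}=ot(Res(ab))$ by Lemma \ref{Resot}. The same holds with $cd$ in place of $ab$, and we may assume $Res(ab)$ and $Res(cd)$ agree near $0$ by the Remark after Lemma \ref{ResRV}. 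Hence there are $\gamma\in Res(ab)\setminus Big^{\omega^{\alpha}}(r)$ arbitrarily close to $0$. For such $\gamma$, Proposition \ref{leib} yields
\[ rv\big(a^{|\gamma}b+ab^{|\gamma}\big)=rv\big(c^{|\gamma}d+cd^{|\gamma}\big), \]
where every summand that is nonzero in $RV$ has value exactly $\omega^{\alpha}$.

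First we dispose of the degenerate case. If some such $\gamma$ lies outside $Res(a)$, or outside $Res(b)$, then Lemma \ref{span} gives $rv(a)\in Sp_K(rv(b))$, so $rv(ab)=k\,rv(b)^2$. Symmetrically, if some such $\gamma$ lies outside $Res(c)\cap Res(d)$, we get $rv(cd)=k'\,rv(d)^2$. In the subcase where both sides are squares up to scalars, $rv(b)^2=k''rv(d)^2$. To conclude $rv(b)=k\,rv(d)$ we pass to truncations at a common good $\gamma$, where $rv(b^{|\gamma}b)=k''' rv(d^{|\gamma}d)$ up to the factor $2$ (assuming $\mathrm{char}\,K\neq 2$; otherwise a separate argument is needed). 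Both truncations have value $<\omega^{\alpha}=v^p(b)=v^p(d)$, so Proposition \ref{2prin} applies directly and yields $rv(b)=k\,rv(d)$. The mixed subcases are handled in the same way as the generic case below.

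In the generic case, almost every $\gamma\in Res(ab)$ lies in $Res(a)\cap Res(b)\cap Res(c)\cap Res(d)$. The two-term identity above does not factor as a single product, so the plan is to exploit the freedom in $\gamma$. The set of admissible $\gamma$ has order type $\omega^{\alpha}$, while the translated truncations $a^{|\gamma}$ etc.\ have values $<\omega^{\alpha}$. Consider the $K$-linear span in $RV^{\omega^{\alpha}}$ generated by $rv(a)$, $rv(b)$, $rv(c)$ and $rv(d)$. Each identity expresses a relation
\[ x_\gamma\,rv(b)+y_\gamma\,rv(a)-z_\gamma\,rv(d)-w_\gamma\,rv(c)=0 \]
with coefficients in $RV$ of value $<\omega^{\alpha}$. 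We argue by contradiction: suppose $rv(b)$ is not a $K$-multiple of either $rv(c)$ or $rv(d)$, and likewise for $rv(a)$. Then, applying Proposition \ref{2prin} to two-term groupings of the relation, whenever one product has strictly larger valuation than the others, we force a matching $rv(b^{|\gamma})$-type equality.

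Concretely, we compare valuations of the four terms. If $v(a^{|\gamma}b)$ is not attained by any other term, the relation is contradictory. If exactly two terms attain the maximum, Proposition \ref{2prin} gives proportionality of the corresponding principal parts, e.g.\ $rv(b)=k\,rv(d)$, and we are done. The main obstacle is the case where three or four terms share the maximal valuation and cancel jointly. Here the plan is to vary $\gamma$ along an $\omega$-sequence inside the admissible set and use Proposition \ref{conv} to relate $a^{|\gamma}$ to $a^{|\gamma'}$. The aim is to show that the ratios $rv(a^{|\gamma})/rv(c^{|\gamma})$ cannot stay consistent across all $\gamma$ unless the principal parts are proportional. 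I expect this joint-cancellation step to be the genuine difficulty, and I would first try to handle it by induction on $\alpha$ applied to the truncations.
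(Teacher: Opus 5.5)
\textbf{There is a genuine gap.} The case you call generic is the heart of the proposition, and you also defer the mixed subcases to it, yet it is not proved. The tools you plan for it cannot work as described.

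The missing observation is this. For $a\in P_{\omega^{\alpha}}$ one has $v(a)=v^p(a)=\omega^{\alpha}$ and $v^r(a)=0$. So $\gamma\in Res(a)$ means exactly that $a^{|\gamma}\equiv a_\gamma \pmod J$, with $a_\gamma\in K^{\times}$ the coefficient of $t^\gamma$ in $a$. At residual points the translated truncations are therefore not arbitrary elements of value $<\omega^{\alpha}$, as in your relation $x_\gamma\,rv(b)+\dots=0$; they are scalars. Hence, for $\gamma\in Res(ab)\setminus Big^{\omega^{\alpha}}(r)$ close to $0$, Proposition \ref{leib} gives
\[ a_\gamma\, rv(b)+b_\gamma\, rv(a)=c_\gamma\, rv(d)+d_\gamma\, rv(c) \]
in the $K$-vector space $RV^{\omega^{\alpha}}=J_{\omega^{\alpha}+1}/J_{\omega^{\alpha}}$. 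This set of $\gamma$ has order type $\omega^{\omega^{\alpha}}$, not $\omega^{\alpha}$ as you wrote. Consequently, in your generic case all four terms have value exactly $\omega^{\alpha}$:
\begin{itemize}
\item the ``joint cancellation'' you postpone is not an exceptional configuration but the only one;
\item no two-term grouping ever isolates;
\item Proposition \ref{2prin} has nothing to act on, since its hypothesis $0<v(c_i)$ fails and the truncations are constants;
\item there is nothing for an induction on $\alpha$ to descend to.
\end{itemize}

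\textbf{What closes the argument in the paper} is linear algebra over $K$ combined with the freedom in $\gamma$. The four vectors $rv(a),rv(b),rv(c),rv(d)$ are fixed, while the scalar coefficients vary over a set of full order type. Coordinates with respect to a $K$-independent set are unique, so they must be constant in $\gamma$. The paper splits into cases:
\begin{itemize}
\item If $rv(a)\in Sp_K(rv(b))$ and $\{rv(c),rv(d)\}$ is independent, the $rv(c)$-coordinate of $rv(b)$ is a fixed multiple of $d_\gamma/b_\gamma$. So $d_\gamma$ is a fixed multiple of $b_\gamma$ for almost all $\gamma$, i.e.\ $rv(d)\in K^{\times}rv(b)$.
\item If both pairs are independent and, say, $\{rv(a),rv(b),rv(d)\}$ is independent, solving for $rv(c)$ makes $b_\gamma/d_\gamma$ constant, which gives the claim.
\item If instead $rv(c),rv(d)\in Sp_K(rv(a),rv(b))$, substituting and comparing $rv(a)$-coordinates forces $a_\gamma$ to be a fixed multiple of $b_\gamma$ almost everywhere. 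This contradicts the independence of $rv(a),rv(b)$, unless a coefficient vanishes, in which case $rv(c)$ or $rv(d)$ is already a multiple of $rv(b)$.
\end{itemize}

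Your degenerate subcase is fine. The same scalar observation makes it immediate: at a common good $\gamma$ one gets $2b_\gamma\, rv(b)=2k''d_\gamma\, rv(d)$, with no appeal to Proposition \ref{2prin}.
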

\begin{proof}
If $rv(a) \in Sp_K(rv(b)), rv(c) \in Sp_K(rv(d))$ then we have \\ $k_1rv(b)^2 = k_2rv(d)^2$ for some $k_1, k_2 \in K^{\times}$. Hence, without loss of generality, $rv(d)=\sqrt{\frac{k_1}{k_2}}rv(b)$. For almost all $\gamma \in Supp(b)\cap Supp(d)$ we have $\frac{d_{\gamma}}{b_{\gamma}} = \sqrt{\frac{k_1}{k_2}}$. Hence, $k= \sqrt{\frac{k_1}{k_2}} \in K^{\times} $.

Suppose without loss of generality $rv(a) \in Sp_K(rv(b)), rv(c) \notin Sp_K(rv(d)) $. Let $rv(b)=krv(a) $. By Lemma \ref{span} for every $\gamma \in Res(cd) $ we have \\$c^{|\gamma} = c_{\gamma}, d^{|\gamma}=d_{\gamma} $. By Proposition \ref{leib} for almost all $\gamma \in Supp(b)$ we obtain $2kb_{\gamma}rv(b) = c_{\gamma}rv(d)+ d_{\gamma}rv(c)$. Hence, \[rv(b) = \frac{c_{\gamma}}{2kb_{\gamma}}rv(d) + \frac{d_{\gamma}}{2kb_{\gamma}}rv(c) .\] As $\{rv(c), rv(d)\} $ is a $K$-linearly independent set then for almost every \\$\gamma_1 \neq \gamma_2 \in Supp(b) $ we have $\frac{d_{\gamma_1}}{2kb_{\gamma_1}} = \frac{d_{\gamma_2}}{2kb_{\gamma_2}}$ . Denote this element by $k'$. Hence for almost all $\gamma \in Supp(b)$ we have $d_{\gamma}=k'2kb_{\gamma} $, as required.

The remaining case is both $\{rv(a), rv(b)\},\{rv(c),rv(d)\} $ are $K$-linearly  independent sets. By Proposition \ref{leib} we obtain for almost every $\gamma \in Supp(b)$ we have $a_{\gamma}rv(b) + b_{\gamma}rv(a) = c_{\gamma}rv(d) + d_{\gamma}rv(c)  $. If without loss of generality $\{rv(a), rv(b), rv(d)\} $ is also a $K$-linearly independent set we have \[rv(c) = \frac{a_{\gamma}}{d_{\gamma}}rv(b) + \frac{b_{\gamma}}{d_{\gamma}}rv(a) + \frac{-c_{\gamma}}{d_{\gamma}}rv(d).\] Denote $k' = \frac{b_{\gamma}}{d_{\gamma}}$. Then we have $d_{\gamma}=k'^{-1}b_{\gamma} $ for almost every $\gamma \in Supp(b)$, as required. 

If $rv(c), rv(d) \in Sp_K(\{rv(a), rv(b)\}) $ then let $rv(c) = \lambda_1rv(a)+\lambda_2rv(b)$, $rv(d) = \lambda_3rv(a) + \lambda_4rv(b) $. Then for by Proposition \ref{leib} for almost every $\gamma \in Supp(b)$ we have \begin{multline*}a_{\gamma}rv(b) +b_{\gamma}rv(a)=(\lambda_3a_{\gamma}+\lambda_4b_{\gamma})(\lambda_1rv(a) + \lambda_2rv(b))+\\+(\lambda_1a_{\gamma}+\lambda_2b_{\gamma})(\lambda_3rv(a) + \lambda_4 rv(b)) \end{multline*}  where $ \lambda_1, \lambda_2, \lambda_3, \lambda_4 \in K^{\times}$. Hence, \[(2\lambda_3\lambda_1a_{\gamma}+(\lambda_4\lambda_1+\lambda_2\lambda_3 - 1)b_{\gamma} )rv(a) + (2\lambda_4\lambda_2b_{\gamma }+(\lambda_3\lambda_2+\lambda_1\lambda_2 -1)a_{\gamma} )rv(b) = 0 .\]   
As $\{rv(a),rv(b) \}$ is a $K$-linearly independent set then we have \[2\lambda_3\lambda_1a_{\gamma}+(\lambda_4\lambda_1+\lambda_2\lambda_3 - 1)b_{\gamma}=0.\] Hence \[a_{\gamma} = -\frac{\lambda_4\lambda_1+\lambda_2\lambda_3 - 1}{2\lambda_3\lambda_1}b_{\gamma} \] for almost every $\gamma \in Supp(b)$, which contradicts the independence of $\{rv(a), rv(b)\} $. Hence, without loss of generality, $rv(d) \in Sp_K(rv(b))$. $\qed$
\end{proof}
\begin{corollary}
Let $p_1,q_1,p_2,q_2$ germ-like of valuation $\omega^{\alpha}$. Suppose \\$p_1p_2=q_1q_2$ and $rv(p_1)=krv(q_1), rv(p_2)=k^{-1}rv(q_2)$ for some $k \in K^{\times} $. Then $p_1=kq_1, p_2=k^{-1}q_2$. 
\end{corollary}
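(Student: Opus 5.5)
The plan is to reduce this corollary to Lemma \ref{fromRVPrin}, which is built for this situation: the factors share the additively principal valuation $\omega^{\alpha}$. We write $p_1 = k(q_1 + s_1)$ and $p_2 = k^{-1}(q_2+s_2)$, where $v(s_1)<\omega^{\alpha}$ and $v(s_2)<\omega^{\alpha}$. This is possible because $rv(p_i)$ equals the corresponding scalar multiple of $rv(q_i)$, which means exactly that the differences lie in $J_{\omega^{\alpha}}$. The $s_i$ are germ-like, since each is a difference of germ-like elements. By Lemma \ref{otrv} we also have $ot(s_i)<ot(p_i)$. Substituting into $p_1p_2=q_1q_2$ gives the identity
\[(*)\qquad q_1s_2+q_2s_1+s_1s_2=0 .\]
It is enough to show $s_1=s_2=0$.

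The first step is to show $s_1,s_2\in J$. Suppose at least one of them is not in $J$. Among the three terms in $(*)$, the two of largest valuation must then have equal valuation. Here $v(s_1s_2)=v(s_1)\oplus v(s_2)<\omega^{\alpha}\le v(q_1s_2)$ and $v(q_2s_1)$, because $\omega^{\alpha}$ is additively principal. Hence $rv(q_1s_2)=-rv(q_2s_1)$, and in particular $v(q_1s_2)=v(q_2s_1)$.

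If we only want membership in $J$, a cruder argument than the $rv$-comparison suffices. One can apply the truncation argument of Lemma \ref{fromRVPrin} directly, and this handles the non-$J$ case as well. Let $\gamma=\max(sup(s_1),sup(s_2))$; we may assume not both $s_i$ vanish. Take the translated truncation of $(*)$ at $\gamma$. Proposition \ref{conv} gives
\[q_1 s_2^{|\gamma}+q_2 s_1^{|\gamma}+(s_1s_2)^{|\gamma}\in J,\]
after discarding the terms $q_i^{|\delta}s_j^{|\varepsilon}$ with $\delta<0$. Discarding these terms is the delicate point. I would argue it as follows: for $\varepsilon>\gamma$ the term $s_j^{|\varepsilon}$ is in $J$, and the remaining products have valuation strictly less than $\omega^{\alpha}$, because the truncations of $q_i$ at negative $\delta$ have order type $<\omega^{\alpha}$.

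Now, at least one $s_i^{|\gamma}$ is a nonzero constant modulo $J$, because $\gamma$ lies in the support of some $s_i$. The truncations $s_i^{|\gamma}$ and $(s_1s_2)^{|\gamma}$ have order type $<\omega^{\alpha}$, since $ot(s_1s_2)\le ot(s_1)\odot ot(s_2)<\omega^{\alpha}$. So the term $q_1s_2^{|\gamma}+q_2s_1^{|\gamma}$ has valuation less than $\omega^{\alpha}$. Passing to $rv$ at level $\omega^{\alpha}$ then gives
\[ s_2^{|\gamma}(\gamma)\,rv(q_1)+s_1^{|\gamma}(\gamma)\,rv(q_2)=0 \]
in $RV^{\omega^{\alpha}}$, where $s_i^{|\gamma}(\gamma)\in K$ denotes the constant term of $s_i^{|\gamma}$. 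This is a linear relation between $rv(q_1)$ and $rv(q_2)$.

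This is the main obstacle, and there are two cases. If $rv(q_1),rv(q_2)$ are $K$-independent, both coefficients vanish, so $\gamma$ is in neither support, a contradiction; hence $s_1=s_2=0$. If they are dependent, then $rv(q_2)=\lambda rv(q_1)$, and the situation is exactly that of Lemma \ref{fromRVPrin}. Set $p=q_1$ and $q_2=\lambda q_1+r$ with $r\in J_{\omega^{\alpha}}$. Normalizing the scalars, the equation $p_1p_2=q_1q_2$ becomes $kp(p+r)=(p+r_1)(kp+kr+r_2)$ with $r_1,r_2\in J_{\omega^\alpha}$ germ-like. Lemma \ref{fromRVPrin} then yields $r_1=r_2=0$, i.e.\ $s_1=s_2=0$, and therefore $p_1=kq_1$, $p_2=k^{-1}q_2$.
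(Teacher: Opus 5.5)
\textbf{Verdict: genuine gap, and it cannot be closed, because the statement is false as written.}

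\textbf{The dependent case.} Your dependent case rests on Lemma \ref{fromRVPrin}, which fails whenever $r\neq 0$. Indeed, $r_1=r$, $r_2=-kr$ solves $kp(p+r)=(p+r_1)(kp+kr+r_2)$, since the right side becomes $(p+r)kp$. Taking $r=1$ makes $r,r_1,r_2$ all germ-like. The lemma's proof only obtains $kr_1+r_2\in J$ where it asserts $r_1,r_2\in J$.

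Concretely, for $b\in P_{\omega^{\alpha}}$ put $q_1=p_2=b$, $q_2=p_1=b+1$, $k=1$. All four are germ-like of valuation $\omega^{\alpha}$, $p_1p_2=q_1q_2$, and $rv(b+1)=rv(b)$ since $v(1)=0<\omega^{\alpha}$. Yet $p_1\neq q_1$. Your argument sends exactly this example into the dependent case, with $s_1=1$, $s_2=-1$, $\lambda=1$, $r=1$.

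The paper's proof ends the same way: Proposition \ref{2prin} applied to $rv(r_2q_1)=-rv(r_1q_2)$ yields $rv(q_1)\in K^{\times}rv(q_2)$, and then Lemma \ref{fromRVPrin} is invoked. So it has the same defect.

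\textbf{What is true.} The statement holds up to swapping the factors, and your own tools prove this. Write $q_2=\lambda q_1+r$ and $u=s_2+\lambda s_1$; then $(*)$ becomes $u(q_1+s_1)=s_1(\lambda s_1-r)$. Suppose $u\neq 0$ and let $\gamma=\sup(u)$. Multiplying by $t^{-\gamma}$ gives $u^{|\gamma}(q_1+s_1)$ on the left, which has valuation $\ge\omega^{\alpha}$. On the right it gives a series of order type $<\omega^{\omega^{\alpha}}$: the argument of Lemma \ref{otrv} gives $ot(s_1),ot(r)<\omega^{\omega^{\alpha}}$, and then Lemma \ref{admul} applies. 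This is impossible, so $u=0$. Hence $s_1(\lambda s_1-r)=0$, and either $p_1=kq_1$, $p_2=k^{-1}q_2$, or $p_1=(k/\lambda)q_2$, $p_2=(\lambda/k)q_1$.

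\textbf{Two further problems in your write-up.}

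First, the $s_i$ need not be germ-like. A difference of germ-like elements can lie in $J$ (e.g.\ $(b+t^{-1})-b$) or have non-principal order type. Only $ot(s_i)<\omega^{\omega^{\alpha}}$ is available. Relatedly, your bounds written as $<\omega^{\alpha}$ should read $<\omega^{\omega^{\alpha}}$.

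Second, the truncation step does not handle the case $s_i\notin J$ as you claim. There $\gamma=0$, and $s_i^{|0}=s_i$ is a constant modulo $J$ only if $s_i\in J+K$. Similarly, if $\gamma=\sup(s_i)<0$ is not attained, then $s_i^{|\gamma}\notin J$ but has zero constant term. In these cases you only get $rv(q_1s_2^{|\gamma})=-rv(q_2s_1^{|\gamma})$ with non-constant factors. Proposition \ref{2prin} is then needed to conclude $rv(q_1)\in K^{\times}rv(q_2)$; this is precisely the paper's step, which you began and then dropped.

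With that repair your independent case is correct. So your argument does prove the corollary under the extra hypothesis $rv(q_1)\notin K^{\times}rv(q_2)$.
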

\begin{proof}
Let $r_1, r_2 \in J_{\omega^{\alpha}}$ such that $p_1 = k(q_1 +r_1)$, $q_2 = k^{-1}(p_2+r_2)$. Hence $r_2q_1+r_1q_2+r_1r_2=0 $. Hence $rv(r_2q_1)=-rv(r_1q_2)$. Hence, by Proposition \ref{2prin} we have that $rv(r_2)=k'rv(-r_1), rv(q_1)=k'^{-1}rv(q_2) $. By Lemma \ref{fromRVPrin} we have that $r_1=r_2=0$. 
\end{proof}

\subsection {The strictly decreasing case}
We prove that if $b$ is germ-like and $v(b)=\sum_{i=1}^{cc(b)}\omega^{\alpha_i}$ where $\alpha_1 > \ldots > \alpha_{cc(b)} $ then $b$ has a unique factorization into irreducible elements. 
As before, we prove first unique factorization in the $RV$. 
\begin{theorem} \label{strictdec}
Let $p_1, \ldots , p_m, q_1, \ldots, q_n$ germ-like of valuation strictly greater than $0$ be such that  $rv(p_1), \ldots , rv(p_m), rv(q_1) , \ldots , rv(q_n)$ are irreducible. Suppose $p_1\ldots p_m = b = q_1\ldots q_n +r $ where $cc(b)=|cf(b)| $ , $v(r) < v(b)$, and without loss of generality $v^p(b)=v^p(p_1)=v^p(q_1) $. Then $n=m$ and there exists some permutation $\pi$ on $\{1, \ldots , m\} $ such that for every $1 \le i \le m$ we have $rv(p_i) = k_i rv(q_{\pi(i)}) $ where $k_i \in K^{\times} $. 
\end{theorem}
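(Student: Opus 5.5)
We argue by induction on $cc(b)$, in the spirit of Proposition \ref{2prin}. First the structure. Since $v$ is additive with respect to $\oplus$ and $cc(b)=|cf(b)|$, the exponents $\alpha_1>\ldots>\alpha_{cc(b)}$ in $v(b)$ are pairwise distinct. The multisets $\biguplus_i cf(p_i)$ and $\biguplus_j cf(q_j)$ both equal $cf(b)$, so they are actually sets.

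Hence $v^p(b)=\omega^{\alpha_{cc(b)}}$ occurs in exactly one factor on each side, namely $p_1$ and $q_1$ by the normalization in the statement. Every other factor $p_i$ ($i\ge 2$) and $q_j$ ($j\ge2$) then has $v^p(p_i)>v^p(p_1)$ and $v^p(q_j)>v^p(q_1)$.

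\textbf{Base case.} If $cc(b)=1$, then $b$ has principal valuation. Each factor has valuation $>0$, and additivity forces $m=n=1$. The relation $p_1=q_1+r$ with $v(r)<v(b)$ then reads $rv(p_1)=rv(q_1)$.

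\textbf{Inductive step.} Assume $cc(b)\ge2$ and put $P=p_2\cdots p_m$, $Q=q_2\cdots q_n$. We split according to whether $v(p_1)=v^p(b)$.

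\textbf{Case $v(p_1)=v^p(b)$, i.e.\ $v^r(p_1)=0$.} If $v(q_1)=v^p(b)$ as well, we choose $\gamma$ close to $0$ with
\[\gamma\in Res(b)\setminus\big(Big^{v^r(b)}(r)\cup(\text{the finitely/small many bad }\gamma)\big).\]
Such $\gamma$ exists by Lemma \ref{Resot} and Corollary \ref{litBig}, which give $ot(Big^{v^r(b)}(r))<v^p(b)=ot(Res(b))$. Applying Lemma \ref{strprin}, or Proposition \ref{leib} when $v^p(b)=1$, to $b=p_1P$ and $q_1Q$ gives
\[ b^{|\gamma}\equiv p_1^{|\gamma}P\equiv q_1^{|\gamma}Q \pmod{J_{v^r(b)}} .\]
Here $p_1^{|\gamma},q_1^{|\gamma}$ lie in $J+K$, and they are nonzero constants mod $J$ for $\gamma\in Res(b)$. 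We therefore get $rv(P)=k\,rv(Q)$ for some $k\in K^\times$. Now $cc(P)=cc(b)-1$, and $P,Q$ are again products of factors with irreducible $rv$; after renormalizing which factor carries $v^p(P)$, the induction hypothesis matches $p_2,\ldots,p_m$ with $q_2,\ldots,q_n$ up to units. Substituting back, $rv(p_1)rv(P)=rv(q_1)rv(Q)$. Since $RV$ multiplication cancels nonzero factors (because $v$ is additive), this yields $rv(p_1)=k^{-1}rv(q_1)$.

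If $v(q_1)>v^p(b)$ while $v(p_1)=v^p(b)$, we derive a contradiction. The same truncation gives $rv(P)\cdot c=rv(q_1^{|\gamma}Q)$ for a constant $c$, and $v(q_1^{|\gamma})=v^r(q_1)>0$. Comparing $cf$, the term $v^r(q_1)$ must be accounted for inside $P$. Applying the induction hypothesis to $P=q_1^{|\gamma}Q+(\text{smaller})$ would force some $rv(p_i)$ to be a constant times $rv(q_1^{|\gamma})$ for many $\gamma$. This should contradict the irreducibility of $rv(q_1)$, or yield a nontrivial factorization of $rv(q_1)$ as $rv(p_1)\cdot(\ldots)$ via Proposition \ref{2prin}, since $v(p_1)<v^p(\cdot)$ of the rest.

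\textbf{Case $v^r(p_1)$ and $v^r(q_1)$ both nonzero.} Lemma \ref{strprin} gives
\[ rv(p_1^{|\gamma}P)=rv(q_1^{|\gamma}Q) \]
for $\gamma$ outside $Big^{v^r(b)}(r)$ and in $Res(b)$. The value $v^r(b)$ has complexity $cc(b)-1$ with distinct exponents, so the induction hypothesis applies after factoring $p_1^{|\gamma}$ and $q_1^{|\gamma}$ into $rv$-irreducibles. This matches the irreducible factors of $P$ with those of $Q$, except possibly for a factor absorbed by $p_1^{|\gamma}$ or $q_1^{|\gamma}$. Because the exponents of $P$ are exactly $cf(b)\setminus cf(p_1)$, disjoint from $cf(p_1^{|\gamma})\subseteq cf(p_1)\setminus\{\alpha_{cc(b)}\}$, no factor of $P$ can be matched with a factor of $q_1^{|\gamma}$ unless $cf(p_1)=cf(q_1)$. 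Using distinctness of the exponents we get $cf(p_1)=cf(q_1)$, and then $P$ and $Q$ are matched factor by factor. Cancelling as before gives $rv(p_1)=k\,rv(q_1)$, and $m=n$ follows.

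\textbf{Expected obstacle.} The delicate point is the mixed case above, together with making ``factor into $rv$-irreducibles'' legitimate for $p_1^{|\gamma}$, which is only defined mod $J_{v^r(b)}$. To handle this we would choose $\gamma$ in $Res(b)\cap Res(p_1)\cap Res(q_1)$, which has full order type by Lemma \ref{Resot} and the Remark after Lemma \ref{ResRV}. We would then apply the induction hypothesis to the equation $p_1^{|\gamma}P=q_1^{|\gamma}Q+r'$ with $v(r')<v^r(b)$. Its hypothesis $cc=|cf|$ holds since $cf(p_1^{|\gamma}P)\subseteq cf(b)$.
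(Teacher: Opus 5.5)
You have a genuine gap: the core case of the induction is asserted rather than proved. Your base case and your subcase $v(p_1)=v(q_1)=v^p(b)$ are fine. There $p_1^{|\gamma},q_1^{|\gamma}$ are units mod $J$, so $rv(P)=k\,rv(Q)$, the induction hypothesis applies to $P,Q$, and cancellation in $RV$ finishes.

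\textbf{Where it breaks.} In your case $v^r(p_1),v^r(q_1)\neq 0$, the step ``using distinctness of the exponents we get $cf(p_1)=cf(q_1)$'' is unsupported, and it cannot follow from valuation data alone. Take $m=n=2$ and $\alpha_1>\alpha_2>\alpha_3$, with $v(p_1)=\omega^{\alpha_1}+\omega^{\alpha_3}$, $v(p_2)=\omega^{\alpha_2}$, $v(q_1)=\omega^{\alpha_2}+\omega^{\alpha_3}$, $v(q_2)=\omega^{\alpha_1}$. All exponents are distinct and the valuations add up correctly. For each single $\gamma$, the truncated equation $rv(p_1^{|\gamma}p_2)=rv(q_1^{|\gamma}q_2)$ plus the induction hypothesis only gives $rv(p_2)\sim rv(q_1^{|\gamma})$ and $rv(q_2)\sim rv(p_1^{|\gamma})$. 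That is a consistent cross-matching, not a contradiction. Your disjointness remark compares factors of $P$ with $p_1^{|\gamma}$, which lie on the same side of the equation, so it says nothing about this cross-matching. Your mixed case $v(p_1)=v^p(b)<v(q_1)$ is the same phenomenon, and there you only say it ``should contradict'' irreducibility.

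\textbf{What is missing.} You need a gluing step that turns information about all truncations back into information about $p_1$ (or $q_1$). The paper organises this as a dichotomy.
\begin{itemize}
\item If some $rv(p_i)$ is a unit multiple of some $rv(q_j)$, cancel that pair and apply the induction hypothesis. The remaining products still satisfy $cc=|cf|$ and have smaller $cc$.
\item If there is no such pair, write $p_1=\sum_{\gamma\in Res(b)}p_\gamma t^\gamma$ and let $\Gamma=Res(b)\setminus Big^{v^r(b)}(r)$. For every $\gamma\in\Gamma$, the induction hypothesis applied to the truncated equation forces every irreducible factor of $Q=q_2\cdots q_n$ to be absorbed into $p_\gamma$ (or symmetrically those of $P$ into $q_\gamma$). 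So $rv(p_\gamma)=k_\gamma\,rv(a_\gamma)\,rv(Q)$.
\end{itemize}
In the second case you then sum over $\gamma\in\Gamma$. The contribution of $Res(b)\setminus\Gamma$ and the accumulated remainders lie in $J_{v(b)}$ by Lemma \ref{otrv}, Lemma \ref{admul} and Corollary \ref{litBig}. This gives
\[ rv(p_1)=rv\Big(\sum_{\gamma\in\Gamma}k_\gamma a_\gamma t^\gamma\Big)\,rv(Q), \]
a factorization into two factors of positive value, which contradicts irreducibility of $rv(p_1)$. In the example above the same gluing gives $rv(q_1)=rv(\sum_\gamma k_\gamma t^\gamma)\,rv(p_2)$.

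Lemma \ref{ResRV} would supply this gluing directly, but only when $v^p(b)=1$. For general $v^p(b)$ you need the order-type estimate. Without this step your induction does not close in either of the non-trivial cases.
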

\begin{proof}
We prove by induction in $cc(b)$. \\
If $cc(b)=1 $ then $b$ is irreducible and clearly $rv(b+r)=rv(b)$.

For $cc(b)>1$ suppose that there exist some $i,j$ such that \\$rv(q_i)=krv(p_j) $ for some $k \in K^{\times}$. Then we have \[rv(kq_1 \ldots  q_{i-1}q_{i+1}\ldots q_n)=rv(p_1\ldots p_{j-1}p_{j+1}\ldots p_m) .\] Hence, by the induction assumption we have the required.

Otherwise, let $\Gamma = Res(b) - Big^{v^r(b)}(r) $. Let $\{p_{\gamma}\}_{\gamma\in Res(b)},\{ q_{\gamma}\}_{\gamma \in Res(b)} $ germ-like such that \[p_1 = \sum_{\gamma \in Res(b)}p_{\gamma}t^{\gamma}, q_1 = \sum_{\gamma \in Res(b)}q_{\gamma}t^{\gamma} .\] By Lemma \ref{admul} \[ot(\sum_{\gamma \in Res(b)-\Gamma}p_{\gamma}t^{\gamma})\le ot(Big^{v^r(b)}(r))\odot v^r(b),\] \[ot(\sum_{\gamma \in Res(b)-\Gamma}q_{\gamma}t^{\gamma})\le ot(Big^{v^r(b)}(r))\odot v^r(b) \] Since by corollary \ref{litBig} we have that $ ot(Big^{v^r(b)}(r))<\omega^{v^p(b)}$  then \[(\sum_{\gamma \in Res(b)-\Gamma}p_{\gamma}t^{\gamma})p_2\ldots p_m, (\sum_{\gamma \in  Res(b)-\Gamma}q_{\gamma}t^{\gamma})q_2\ldots p_n \in J_{v(b)}. \]

For every $\gamma \in \Gamma$ we have by Lemma \ref{strprin} that \[ p_{\gamma}p_2 \ldots p_m = q_{\gamma} q_2\ldots q_n +r_{\gamma}\] where $v(r_{\gamma})<v^r(b) $. By the induction assumption we obtain that either there exist some $1<i,j$ such that $rv(q_i)=krv(p_j) $ for some $k \in K^{\times}$ or $cf(p_2\ldots p_m)\subset cf(q_{\gamma}), cf(q_2\ldots q_n)\subset cf(p_{\gamma}) $. In the first case we are done, in the latter we have that either $v^p(b^{|\gamma})=v^p(p_{\gamma}) $ or $v^p(b^{|\gamma})=v^p(q_{\gamma}) $. Assume without loss of generality the latter. We have $p_{\gamma} = a_{\gamma}a_2\ldots a_n $ where $a_2,\ldots , a_n $ are germ-like with $rv(a_i)=k_irv(q_i) $ where $k_i \in K^{\times}$ for every $2\le i \le n$, and $a_{\gamma}$ is either $1$ or germ-like. For every $2 \le i \le n$ we have by Lemma \ref{otrv} that $ot(a_i -k_iq_i)<ot(q_i). $ Since $v^p(q_i)>v^p(b^{|\gamma})$  by Lemma \ref{admul} we have that \[ot(a_{\gamma}p_2\ldots p_m k_2q_2\ldots k_{i-1}q_{i-1}a_{i+1}\ldots a_n(a_i-k_iq_i)) <\omega^{v^r(b^{|\gamma})}. \] Again by Lemma \ref{admul} and the fact that $v^r(b^{|\gamma}) $ is additively principal we have that  \[ot(p_{\gamma}p_2\ldots p_m - q_2\ldots q_nk_{\gamma}a_{\gamma}p_2\ldots p_m)<\omega^{v^r(b^{|\gamma})} \] where $k_{\gamma}=k_1\ldots k_n$. Hence, \[(\sum_{\gamma \in \Gamma}p_{\gamma}t^{\gamma}) p_2\ldots p_m = (\sum_{\gamma \in \Gamma}k_{\gamma}a_{\gamma}t^{\gamma})p_2\ldots p_mq_2\ldots q_n+\sum_{\gamma \in \Gamma}r'_{\gamma}t^{\gamma} \]
Where $ot(r'_{\gamma})<\omega^{v^r(b^{|\gamma})} $ for every $\gamma \in \Gamma$. Since $ot(\Gamma)=\omega^{v^p(b)}<\omega^{v^p(b^{|\gamma})} $ we have by Lemma \ref{admul} for some $\gamma_0 \in \Gamma$ that \[ot(\sum_{\gamma \in \Gamma}r'_{\gamma}t^{\gamma})\le \omega^{v^r(b^{|\gamma_0})}\odot\omega^{v^p(b)} <\omega^{v^r(b)} .\] Hence, \[(\sum_{\gamma \in \Gamma}q_{\gamma}t^{\gamma})q_2\ldots q_n = (\sum_{\gamma \in \Gamma}k_{\gamma}a_{\gamma}t^{\gamma})p_2\ldots p_mq_2\ldots q_n + r' \]
where $r' \in J_{v(b)}$. Hence, for some $k \in K^{\times}$ we have \[rv(q_1)=krv(\sum_{\gamma \in \Gamma}k_{\gamma}a_{\gamma}t^{\gamma})rv(p_2)\ldots rv(p_m)\]
Hence, $rv(q_1)$ is not irreducible, which is a contradiction. $\qed$
\end{proof}
\begin{corollary}
Let $b$ germ-like such that $|cf(b)|=cc(b)$. Then $b$ has a unique factorization into irreducible elements.
\end{corollary}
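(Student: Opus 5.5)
Existence of a factorization into irreducibles is not the issue here. Since $b$ is germ-like with $v(b)>0$ and $v$ is additive with respect to $\oplus$, any nontrivial factorization strictly lowers the Cantor complexity of each factor. Indeed, a germ-like factor with valuation $0$ lies in $J+K$ and, not being divisible by monomials, is a unit up to the germ-like normalisation. Hence a factorization $b=p_1\cdots p_m$ into irreducible germ-like factors exists, with $cf(b)$ the disjoint union of the multisets $cf(p_i)$.

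Because $|cf(b)|=cc(b)$, all exponents in $cf(b)$ are distinct. The sets $cf(p_i)$ are therefore pairwise disjoint, and no two factors share a principal value. Now suppose $b=p_1\cdots p_m=q_1\cdots q_n$ are two factorizations into irreducibles.

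The plan is to pass to the $RV$ and apply Theorem \ref{strictdec} with $r=0$. First I need each $rv(p_i)$, $rv(q_j)$ to be irreducible in $RV$. If $rv(p_i)=rv(c)rv(d)$ with $v(c),v(d)>0$, then $cf(c)\cap cf(d)=\emptyset$ because $cf(p_i)\subseteq cf(b)$ has distinct entries. Writing $p_i=cd+r'$ with $v(r')<v(p_i)$, I would like to lift this to an actual factorization of $p_i$. That lifting is not available in general, so I will instead argue by induction on $cc(b)$, ordering the factors so that $v^p(b)=v^p(p_1)=v^p(q_1)$; this is possible since exactly one factor on each side carries the smallest exponent $\alpha_{cc(b)}$. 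If some $rv(p_i)$ is reducible, I replace it by its $RV$-factorization, with finitely many steps since $cc$ decreases. Theorem \ref{strictdec} then still matches irreducible $RV$-pieces up to scalars, and the disjointness of the $cf$'s forces the pieces of $p_i$ to be matched with pieces of a single $q_j$: the one whose $cf$ contains those exponents. Either way I obtain $m=n$ and a permutation $\pi$ with $rv(p_i)=k_i\,rv(q_{\pi(i)})$. Here $\pi$ is determined by the condition $cf(p_i)=cf(q_{\pi(i)})$.

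Next I lift the $RV$ equalities to equalities of series. I reorder so that $\pi$ is the identity and proceed by induction on $m$. Set $P=p_2\cdots p_m$ and $Q=q_2\cdots q_m$, so that $p_1P=q_1Q$. Multiplicativity of $rv$ gives $rv(p_1)=k_1rv(q_1)$ and $rv(P)=k_1^{-1}rv(Q)$. Since $cf(q_1)\cap cf(Q)=\emptyset$, Proposition \ref{fromrv} yields $p_1=k_1q_1$ and $P=k_1^{-1}Q$. The induction hypothesis applied to $P=k_1^{-1}Q$, whose valuation again has distinct Cantor exponents, then gives $p_i=k_i'q_i$ for all $i$. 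This is uniqueness up to units in $K^\times$.

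The main obstacle is the irreducibility transfer: an irreducible germ-like element might have reducible $rv$. I would handle it as sketched, by refining to $RV$-irreducible pieces and using the disjointness of exponents to regroup those pieces factor by factor. If that regrouping is delicate, the alternative is to prove directly that $cc(p)\ge 2$ with distinct exponents and reducible $rv(p)$ forces $p$ reducible. That would be done via Proposition \ref{fromrv}-style lifting, using truncations at $\gamma\in Res(p)$ and Lemma \ref{strprin}.
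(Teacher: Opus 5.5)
Your route is the paper's: get the matching in $RV$ from Theorem \ref{strictdec} with $r=0$, then lift it with Proposition \ref{fromrv}. Your lifting step, by induction on $m$ with $P=p_2\cdots p_m$ and $Q=q_2\cdots q_m$, is sound and more complete than the paper's, which only writes out two factors.

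The gap is in how you construct $\pi$. Theorem \ref{strictdec} needs every $rv(p_i)$ and $rv(q_j)$ to be irreducible. Your regrouping argument for the case where they are not fails, because disjointness of exponents does not force the $RV$-pieces of one $p_i$ to be matched with pieces of a single $q_j$. Take $cf(b)=\{\alpha_1,\alpha_2,\alpha_3\}$ and $rv(b)=u_1u_2u_3$ with $v(u_l)=\omega^{\alpha_l}$. Consider the groupings $rv(p_1)=u_1u_2$, $rv(p_2)=u_3$ and $rv(q_1)=u_1$, $rv(q_2)=u_2u_3$. This is consistent with pairwise disjoint $cf$'s and with uniqueness in $RV$. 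Yet $cf(p_1)$ is contained in no $cf(q_j)$, so defining $\pi$ by $cf(p_i)=cf(q_{\pi(i)})$ presupposes the conclusion.

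Excluding such configurations requires using the irreducibility of $p_1$ (or $q_2$) in the ring. That is exactly the lifting of $RV$-factorizations that you concede is unavailable, and your fallback only names it. Try it on the example. Take $w$ germ-like with $rv(w)=u_2$ (suitably normalized), and write $p_1=q_1w+r_1$ and $q_2=wp_2+r_2$. Then $r_1p_2=q_1r_2$, and you would need $q_1\mid r_1$, so that $p_1=q_1(w+s)$ is a proper factorization. That is a primality-type property of $q_1$, which has $v(q_1)\ge\omega$, whereas primality is only available for valuation $1$ (Theorem \ref{Prime}).

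The paper's own proof does not address this either. It simply asserts that unique factorization of $rv(b)$ lets one assume $rv(p_1)=k\,rv(q_1)$ and $rv(p_2)=k^{-1}rv(q_2)$. This tacitly assumes both factorizations group the $RV$-irreducible factors the same way. You identified the weak point correctly, but your proposal does not close it.
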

\begin{proof}
Suppose $p_1, p_2, q_1, q_2$ are germ-like such that $p_1p_2=b=q_1q_2 $. By Theorem \ref{strictdec} we know that $rv(b)$ has a unique factorization. Hence, we may assume $rv(p_1)=krv(q_1), rv(p_2)=k^{-1}rv(q_2) $ for some $k \in K^{\times}$. As $|cf(b)|=cc(b) $ then clearly $cf(p_1)\cap cf(p_2) = \emptyset $. Hence, by Proposition \ref{fromrv}, we have $p_1=kq_1, p_2= k^{-1}q_1 $. 
\end{proof}
The proof of Theorem \ref{strictdec} may be repeated in a simpler way for the case of $\alpha = 2\omega^{\beta}+1$ for $\beta > 0$.  
\begin{proposition} \label{limsuc}
Let $\alpha$ be as above. Then for every $b \in P_{\alpha}$ we have that $rv(b)$ has a unique factorization into irreducible elements.
\end{proposition}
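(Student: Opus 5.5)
We want unique factorization of $rv(b)$ for $b\in P_\alpha$ with $\alpha=2\omega^{\beta}+1=\omega^\beta+\omega^\beta+1$, $\beta>0$. The plan is to follow the proof of Theorem \ref{strictdec}, using the successor structure $v^p(b)=1$ in place of the strict decrease of the Cantor normal form. The first step is to list the possible factorizations of $rv(b)$ into irreducible $RV$-elements by valuation: since $v$ is additive with respect to $\oplus$, the valuations of the factors are summands of $\omega^\beta\oplus\omega^\beta\oplus 1$. So the factors are one of:
\begin{itemize}
\item $\{\omega^\beta+\omega^\beta+1\}$;
\item $\{\omega^\beta+\omega^\beta,\ 1\}$;
\item $\{\omega^\beta+1,\ \omega^\beta\}$;
\item $\{\omega^\beta,\ \omega^\beta,\ 1\}$.
\end{itemize}
The factor of valuation $1$ is prime in $RV$ by Proposition \ref{RVPrime}. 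Hence, whenever both factorizations contain a factor of valuation $1$, we cancel it up to a scalar in $K^\times$. This reduces to unique factorization in valuation $2\omega^\beta$, which is the proposition of the previous subsection (together with Proposition \ref{2prin} in the case $\omega^\beta+\omega^\beta$ versus $\omega^\beta\cdot\omega^\beta$).

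The remaining cases are where at least one factorization has no factor of valuation $1$. Suppose $rv(p_1p_2)=rv(q_1\cdots q_n)$ with $v(p_1)=\omega^\beta+1$, $v(p_2)=\omega^\beta$, while the right-hand side contains a valuation-$1$ factor $q_n$. Primality of $rv(q_n)$ gives $rv(q_n)\mid rv(p_1)$ or $rv(q_n)\mid rv(p_2)$. The second is impossible by valuation, since $\omega^\beta$ is additively principal and cannot be written as $\gamma\oplus 1$. So $rv(q_n)\mid rv(p_1)$, contradicting the irreducibility of $rv(p_1)$ (the cofactor has valuation $\omega^\beta>0$, so it is not a unit). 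Symmetrically, if the right-hand side is also of type $\{\omega^\beta+1,\omega^\beta\}$, we compare $rv(p_1p_2)=rv(q_1q_2)$ directly. Here $v(p_2),v(q_2)=\omega^\beta>1$, but we cannot apply Proposition \ref{2prin} directly because $v(p_2)$ is not less than $v^p(p_1)=1$.

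For this pairing we mimic Theorem \ref{strictdec}. Write $p_1p_2=q_1q_2+r$ with $v(r)<v(b)$. Take $\gamma\in Res(b)-Big^{v^r(b)}(r)$, which is infinite and cofinal in $0$ by Lemma \ref{Resot} and Corollary \ref{litBig} (here $v^p(b)=1$, so $ot(Res(b))=\omega$ and $Big$ is finite). We use Proposition \ref{leib} with $v^p(p_1)=1\le v^p(p_2)$, or equivalently Lemma \ref{smallder}-type reasoning. Writing $p_1^{|\gamma}=p_{1,\gamma}\in J+K$ modulo lower terms, we obtain
\[ rv(p_1^{|\gamma}p_2)=rv(q_1^{|\gamma}q_2) \]
in valuation $2\omega^\beta$. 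Since $v(p_1^{|\gamma})=v(q_1^{|\gamma})=\omega^\beta$ and everything lives in valuation $\omega^\beta\oplus\omega^\beta$, the previous subsection's unique factorization in $RV$ gives, for each such $\gamma$, that either $rv(p_2)=k_\gamma rv(q_2)$ (and then we are done, cancelling and using Lemma \ref{ResRV} if needed), or $rv(p_2)\in Sp_K(rv(q_1^{|\gamma}))$ and $rv(q_2)\in Sp_K(rv(p_1^{|\gamma}))$ for all $\gamma$ in a cofinal set.

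In the latter case, the key step is to reassemble. Write $p_1=\sum_{\gamma\in Res(b)}p_{1,\gamma}t^\gamma$ up to $J_{\omega^\beta}$-corrections, with $rv(p_{1,\gamma})=c_\gamma rv(q_2)$, $c_\gamma\in K^\times$. Lemma \ref{ResRV} (which applies because $v^p=1$) then yields $rv(p_1)=rv(a)\,rv(q_2)$ for the germ $a=\sum_\gamma c_\gamma t^\gamma$ of valuation $1$. This contradicts the irreducibility of $rv(p_1)$. The ordinal bookkeeping, namely that the errors $r'_\gamma$ summed over an $\omega$-type set stay below $v^r(b)$, is the analogue of the computation in Theorem \ref{strictdec} and is easier here because $ot(\Gamma)=\omega$.

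I expect the main obstacle to be exactly this reassembly: one must verify that the per-$\gamma$ conclusion can be made uniform, with a fixed representative $q_2$ and scalars $c_\gamma$, modulo errors whose sum has valuation below $v(p_1)$. The approach I would try first is to fix a representative of $q_2$ once, and to control the errors via Lemma \ref{otrv} and Lemma \ref{admul}, exactly as in Theorem \ref{strictdec}.
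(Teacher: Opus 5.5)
Your proposal is correct and follows essentially the paper's proof. You truncate at $\gamma\in Res(b)$ to get $rv(p_1^{|\gamma}p_2)=rv(q_1^{|\gamma}q_2)$ in valuation $2\omega^\beta$, apply the previous subsection's $RV$ unique factorization, and in the non-associate case reassemble $rv(p_1)=rv(a)rv(q_2)$ via Lemma \ref{ResRV} to contradict irreducibility, exactly as the paper does with $p=(\sum_\gamma k_\gamma t^\gamma)q_2$. Your additional points are sound improvements over details the paper glosses: the case analysis of valuation-$1$ factors via Proposition \ref{RVPrime}, and using Proposition \ref{leib} instead of Lemma \ref{strprin} (whose hypothesis $v^p(p)>1$ fails here). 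The reassembly obstacle you worry about is already settled by Lemma \ref{ResRV}, with Lemma \ref{smallder} computing $(aq_2)^{|\gamma}$, so no further ordinal bookkeeping is needed.
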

\begin{proof}
$p_1, p_2,q_1,q_1$ germ-like such that  $rv(p_1), rv(p_2), rv(q_1) ,rv(q_2)$ are irreducible. Suppose $p_1p_2 = b = q_1q_2 +r $ where $v(r) < v(b)$, and without loss of generality $v^p(p_1)=v^p(q_1)=1 $. Let $\{p_{\gamma}\}_{\gamma\in res(b)},\{ q_{\gamma}\}_{\gamma \in Res(b)} $ germ-like such that \[p_1 = \sum_{\gamma \in Res(b)}p_{\gamma}t^{\gamma}, q_1 = \sum_{\gamma \in Res(b)}q_{\gamma}t^{\gamma} .\]
For every $\gamma \in Res(b)$ we have by Lemma \ref{strprin} that $p_{\gamma}p_2  = q_{\gamma} q_2 +r_{\gamma}$ where $v(r_{\gamma})<v^r(b) $.
If there exists some $\gamma \in Res(b)$ for which $rv(p_{\gamma})=krv(q_{\gamma})$ for some $k \in K^{\times}$ then $rv(p_2)=k^{-1}rv(q_2)$. Hence $rv(p_1)=krv(q_1)$ and we are done. Otherwise, by unique factorization for every $\gamma \in Res(b)=Res(q_1)$ we have $rv(p_{\gamma}) = k_{\gamma}rv(q_2)$ for some $k_{\gamma} \in K^{\times}$. Let \[p = (\sum_{\gamma \in Res(b)}k_{\gamma}t^{\gamma})q_2.\]
As $Res(p)=Res(b)=Res(p_1)$ and $v^p(p)=v^p(b)=v^p(p_1)=1$ then by Lemma \ref{ResRV} we have $rv(p_1)=rv(p)$. Hence, \[rv(p_1) = rv(\sum_{\gamma \in Res(b)}k_{\gamma}t^{\gamma})rv(q_2).\] Hence, $rv(p_1)$ is not irreducible, a contradiction. $\qed$
\end{proof}
We show now that also in this case we may deduce unique factorization from the unique factorization in the $RV$. 
\begin{lemma}
Let $p_1, q_1$ germ-like of valuation ${\alpha}$ where $\alpha$ is additively principal and $p_2,q_2$ germ-like of valuation ${\alpha}+1$. Suppose $p_1p_2 = q_1q_2$ and $rv(p_1)=krv(q_1)$, $rv(p_2) = k^{-1}rv(q_2)$ for some $k \in K^{\times}$. Then $p_1 = kq_1$, $p_2 = k^{-1}q_2$.
\end{lemma}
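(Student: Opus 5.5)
We follow the pattern of Lemma \ref{sucRV} and Lemma \ref{fromRVPrin}. Write $p_1 = k(q_1+r_1)$ and $p_2 = k^{-1}(q_2+r_2)$. By Lemma \ref{otrv} we have $r_1 \in J_{\alpha}$ and $r_2 \in J_{\alpha+1}$, with $ot(r_1)<ot(q_1)$ and $ot(r_2)<ot(q_2)$. Expanding $p_1p_2=q_1q_2$ gives
\[(*)\qquad q_1r_2 + q_2r_1 + r_1r_2 = 0 .\]
The goal is to show $r_1=r_2=0$.

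First we compare valuations in $(*)$. Since $\alpha$ is additively principal and $v(r_1)<\alpha$, we get $v(r_1r_2)<v(q_1r_2)$ whenever $r_2\notin J$. So, if $r_1,r_2\notin J$, we obtain $rv(q_1r_2)=-rv(q_2r_1)$. I will use the structure $v(q_2)=\alpha+1$, so $v^p(q_2)=1$ and $v^r(q_2)=\alpha$. From $rv(q_1)rv(r_2)=-rv(q_2)rv(r_1)$ we want to show that $rv(r_2)$ is divisible by an element of valuation $1$ coming from $q_2$. This step parallels the use of Proposition \ref{RVPrime} in Lemma \ref{sucRV}.

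Concretely, I would pass to translated truncations. Take $\gamma\in Res(q_2)$ close to $0$ and outside $Big^{\alpha}$ of the error terms; such $\gamma$ exist by Corollary \ref{litBig}, since $ot(Res(q_2))=\omega$ by Lemma \ref{Resot}. Then apply Lemma \ref{smallder} with $a=q_1r_2$ or $r_1$ against $q_2$, together with Proposition \ref{leib}. From these we compare $v(q_1r_2)$ with $v(q_2r_1)=v(r_1)\oplus\alpha\oplus1$. This should force $v(r_2)=v(r_1)+1$, with $v^p(r_2)=1$ whenever $r_2\notin J$. The degree count then shows that $rv(q_2)$ and $rv(r_2)$ share a $v=1$ factor. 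By primality (Proposition \ref{RVPrime}) we can factor out the maximal power of this factor as in Lemma \ref{sucRV}, and we reach a contradiction with $v(q_1)=\alpha$ being additively principal and coprime to the valuation-$1$ primes. The conclusion of this step is that $r_1,r_2\in J$.

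Next, with $r_1,r_2\in J$, we run the truncation argument of Proposition \ref{fromrv}. Let $\gamma=\max(sup(r_1),sup(r_2))$. Taking translated truncations of $(*)$ at $\gamma$ and applying Proposition \ref{conv} gives
\[q_1r_2^{|\gamma}+q_2r_1^{|\gamma}+(r_1r_2)^{|\gamma}\in J .\]
Here $v((r_1r_2)^{|\gamma})<\alpha$, because $ot(r_1r_2)\le ot(r_1)\odot ot(r_2)$ and that bound is small compared to $\omega^{\alpha}$; this case needs care. Suppose one of $r_1^{|\gamma}$, $r_2^{|\gamma}$ is not in $J$. Then the two leading terms must cancel in $RV$, and the same valuation comparison as above gives a contradiction. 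Hence $r_1^{|\gamma},r_2^{|\gamma}\in J$, so $r_1=r_2=0$ by the choice of $\gamma$ (when one $r_i=0$, $(*)$ kills the other directly).

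The main obstacle is the first step. The valuations $\alpha$ and $\alpha+1$ share the component $\omega^{\alpha'}$ of $\alpha=\omega^{\alpha'}$, so $cf(q_1)\cap cf(q_2)\neq\emptyset$ and Proposition \ref{fromrv} does not apply directly. Excluding $rv(q_1r_2)=-rv(q_2r_1)$ with $r_i\notin J$ therefore needs the successor structure. My first attempt is to use Lemma \ref{ResRV} on the $v^p=1$ part, reducing modulo $J_\alpha$ via residues so that the problem becomes one about valuation-$1$ elements, where Theorem \ref{Prime} and Proposition \ref{RVPrime} give primality.
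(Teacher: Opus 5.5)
Your proposal has a genuine gap in the first step. The only information you extract from $(*)$ is the leading-order identity $rv(q_1)rv(r_2)=-rv(q_2)rv(r_1)$, equivalently $v(r_2)=v(r_1)\oplus 1$. This identity cannot rule out $r_1,r_2\notin J$. For instance, if $q_2=bq_1$ with $v(b)=1$, then $r_2=-br_1$ satisfies it for every $r_1$ with $0\le v(r_1)<\alpha$. Likewise, even if you had a valuation-$1$ prime dividing $rv(q_2)$, cancelling it via Proposition \ref{RVPrime} would only leave a relation of the type in Proposition \ref{2prin}. That relation's conclusion ($rv(q_1)$ is a scalar multiple of the cofactor) is consistent, not contradictory.

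You do not even have that prime. The ``degree count'' gives $v^p(r_2)=1$, not a common valuation-$1$ factor of $rv(q_2)$ and $rv(r_2)$. You give no reason why $rv(q_2)$ should have a factor of valuation $1$ at all: an element of valuation $\alpha+1$ need not split, and extracting such a splitting from the equation is the real content of the proof. Also, Lemma \ref{smallder} cannot be applied against $q_2$, since $v^p(q_2)=1$ would require $v(a)<1$. Your second step inherits the same problem. Truncating at $\gamma=\max(sup(r_1),sup(r_2))$ just produces $q_1r_2^{|\gamma}+q_2r_1^{|\gamma}\in J$, which is the same leading-order situation again, so ``the same valuation comparison'' gives nothing.

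The missing idea is to \emph{derive} a valuation-$1$ factor of $rv(q_2)$ from the equation. This uses the stronger fact $r_1q_2+r_2q_1=-r_1r_2\in J_\alpha$, not just equality of $rv$'s. The paper proceeds as follows:
\begin{itemize}
\item It fixes $\gamma\in Res(q_2)$ and iterates translated truncations of $r_1q_2$ at $\gamma$ (Proposition \ref{leib}, Lemma \ref{smallder}) until it reaches $rv(nr_1^{|(n-1)\gamma}q_2^{|\gamma})=rv(-r_2^{|n\gamma}q_1)$.
\item Proposition \ref{2prin} then gives $rv(q_2^{|\gamma})=k_\gamma rv(q_1)$.
\item Lemma \ref{ResRV} glues these into $rv(q_2)=rv(q_1)rv(b)$, where $b=\sum_{\gamma\in Res(q_2)}k_\gamma t^\gamma$ and $v(b)=1$.
\item Writing $q_2=bq_1+r$ with $v(r)\le\alpha$, the hypothesis becomes $q_1(bq_1+r)=(q_1+r_1)(bq_1+r+r_2)$, and Lemma \ref{sucRV} finishes.
\end{itemize}
In Lemma \ref{sucRV} the contradiction comes from the lower-order term $rr_1$: since $v(r)=\alpha$ is additively principal, $rv(b)$ does not divide $rv(r)$. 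When $v(r)<\alpha$ it comes instead from the argument of Lemma \ref{fromRVPrin}. This second-order information is exactly what your outline lacks.
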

\begin{proof}
Suppose $p_1 = k(q_1+r_1)$, $p_2 = k^{-1} (q_2+r_2 )$ where $r_1 \in J_{\alpha}$, \\$r_2 \in J_{\alpha+1}$. By assumption we have $(*)r_1q_2 + r_2q_1 +r_1r_2 =0 $. Hence $v(r_1q_2) = v(r_2q_1) $. Hence, $v(r_2)= v(r_1)+1$, which implies that $r_2 \in J_{\alpha}$. Hence $r_1q_2 + r_2q_1 \in J_{\alpha} $. For every $\gamma \in Res(q_2)$ we have by Proposition \ref{leib} \[(r_1q_2)^{|\gamma} = r_1^{|\gamma}q_2 + r_1q_2^{|\gamma} \mbox { mod } J_{\alpha \oplus v(r_1)}.\] Let $n$ be the minimal such that $v(r_1^{|n\gamma})\oplus 1 <v(r_1^{|(n-1)\gamma}) $. For every $m < n $ we have by Proposition \ref{leib} \[(r_1^{|m\gamma}q_2)^{|\gamma} = r_1^{|(m+1)\gamma}q_2 + r_1^{|m\gamma}q_2^{|\gamma} \mbox{ mod } J_{\alpha \oplus v(r_1^{|m\gamma})} \] and by Lemma \ref{smallder} we have \[(mr_1^{|(m-1)|\gamma}q_2^{|\gamma})^{|\gamma} = mr_1^{|m\gamma}q_2^{|\gamma} \mbox { mod } J_{\alpha}. \] Hence, we have \[ (r_1q_2)^{|(m+1)\gamma} = r_1^{|(m+1)\gamma}q_2 + (m+1)r_1^{|m\gamma}q_2^{|\gamma} \mbox { mod } J_{\alpha \oplus v(r_1^{|m\gamma})} \]
Hence, 
\[rv((r_1q_2)^{|n\gamma})=rv(nr_1^{|(n-1)\gamma}q_2^{|\gamma} ) \] and $v((r_1q_2)^{|n\gamma})\ge \alpha $. Hence, $rv((r_1q_2)^{|n\gamma})=rv((r_2q_1)^{|n\gamma}) $. By Lemma \ref{smallder} we have \[(r_2q_1)^{|n\gamma} = r_2^{|n\gamma}q_1 \mbox{ mod } J_{\alpha}  .\] Hence for every $\gamma \in Res(q_2)$ we have \[rv(nr_1^{|(n-1)\gamma}q_2^{|\gamma}) = rv(-r_2^{|n\gamma}q_1).\] Hence by Proposition \ref{2prin} we have $rv(q_2^{|\gamma})=k_{\gamma}rv(q_1)$ for some $k_{\gamma} \in K^{\times}$. Let \[b = \sum_{\gamma \in Res(q_2)}k_{\gamma}t^{\gamma}.\] As $Res(q_1b)=Res(q_2)$ and $v^p(q_2)=1 $ then by Lemma \ref{ResRV} we obtain $rv(q_2) = rv(q_1)rv(b)$. As $v(b)=1$ then by Lemma \ref{sucRV} we are done. $\qed$
\end{proof}

\end{document}